\let\pa\partial
\let\na\nabla
\let\eps\varepsilon
\newcommand{\N}{{\mathbb N}}
\newcommand{\R}{{\mathbb R}}
\newcommand{\diver}{\operatorname{div}}
\newcommand{\F}{\mathcal{F}}
\newcommand{\m}{\mathrm{m}}
\newtheorem{theorem}{Theorem}
\newtheorem{lemma}[theorem]{Lemma}
\newtheorem{remark}[theorem]{Remark}
\newtheorem{corollary}[theorem]{Corollary}
\begin{document}

\title[Charge transport systems for memristors]{Charge transport systems with \\ Fermi--Dirac statistics for memristors}

\author[M. Herda]{Maxime Herda}
\address{Univ. Lille, Inria, CNRS, UMR 8524 - Laboratoire Paul Painlevé, F-59000 Lille, France}
\email{maxime.herda@inria.fr}

\author[A. J\"ungel]{Ansgar J\"ungel}
\address{Institute of Analysis and Scientific Computing, TU Wien, Wiedner Hauptstra\ss e 8--10, 1040 Wien, Austria}
\email{juengel@tuwien.ac.at} 

\author[S. Portisch]{Stefan Portisch}
\address{Institute of Analysis and Scientific Computing, TU Wien, Wiedner Hauptstra\ss e 8--10, 1040 Wien, Austria}
\email{stefan.portisch@tuwien.ac.at} 

\date{\today}

\thanks{The first author acknowledges support from the LabEx CEMPI (ANR-11-LABX-0007). The last two authors acknowledge partial support from  the Austrian Science Fund (FWF), grant DOI 10.55776/P33010 and 10.55776/F65. This work has received funding from the European Research Council (ERC) under the European Union's Horizon 2020 research and innovation programme, ERC Advanced Grant NEUROMORPH, no.~101018153.} 

\begin{abstract}
An instationary drift--diffusion system for the electron, hole, and oxygen vacancy densities, coupled to the Poisson equation for the electric potential, is analyzed in a bounded domain with mixed Dirichlet--Neumann boundary conditions. The electron and hole densities are governed by Fermi--Dirac statistics, while the oxygen vacancy density is governed by Blakemore statistics. The equations model the charge carrier dynamics in memristive devices used in semiconductor technology. The global existence of weak solutions is proved in up to three space dimensions. The proof is based on the free energy inequality, an iteration argument to improve the integrability of the densities, and estimations of the Fermi--Dirac integral. Under a physically realistic elliptic regularity condition, it is proved that the densities are bounded.
\end{abstract}

\keywords{Drift-diffusion equations, Fermi--Dirac statistics, Blakemore statistics, global existence, bounded weak solutions, memristors, semiconductors, neuromorphic computing.}  
 
\subjclass[2000]{35B45, 35B65, 35K51, 35K65, 35Q81.}

\maketitle


\section{Introduction}

Memristors are nonlinear resistors with memory able to exhibit a resistive switching behavior. In neuromorphic computing, they are used to build artificial neurons and synapses \cite{IeAm20}. Also perovskite solar cells may show a memristive behavior, emulating synaptic- and neural-like dynamics \cite{TeVa20}. In semiconductor technology, often oxide-based memristors are used. They consist of a thin titanium dioxide layer between two metal electrodes \cite{Mla19}. Charge carriers are the electrons, holes (defect electrons), and oxide vacancies which allow for a modulation of the layer conductance. 

Generally, the relation between the electron density and its chemical potential (quasi-Fermi potential) is given by Fermi--Dirac statistics. In low-density regimes, this reduces to Maxwell--Boltzmann statistics, leading to particle fluxes with linear diffusion \cite{Jue09}, while in  high-density regimes, Fermi--Dirac statistics reduce to a power-law density--chemical potential relation, leading to fluxes with degenerate diffusion. A mathematical analysis of the associated low-density drift--diffusion equations was performed in \cite{JJZ23}, while high-density models were studied in \cite{JuVe23}. In this paper, we investigate for the first time a general drift-diffusion system with Fermi--Dirac statistics for the electrons and holes as well as physically motivated Blakemore statistics for the oxide vacancies.

\subsection{Model equations}

The charge transport through the semiconductor device is supposed to be governed by the mass balance equations for the electron density $n(x,t)$, hole density $p(x,t)$, and density $D(x,t)$ of oxide vacancies, and the gradients of the associated chemical potentials (quasi-Fermi potentials) $\mu_n$, $\mu_p$, and $\mu_D$ are the driving forces of the flow. This leads to the (scaled) equations
\begin{align*}
  \pa_t n - \diver J_n = 0, &\quad J_n = n\na\mu_n, \\  
  \pa_t p + \diver J_p = 0, &\quad J_p = -p\na\mu_p, \\
  \pa_t D + \diver J_D = 0, &\quad J_D = -D\na\mu_D,
\end{align*}
where $J_n$, $J_p$, and $J_D$ are the electron, hole, and oxide vacancy current densities, respectively. Fermi--Dirac statistics is valid for electrons in the conduction band and for holes in the valance band in the parabolic band approximation \cite[Sec.~1.6]{Jue09}, giving the relations
\begin{align*}
  n = \F_{1/2}(\mu_n+V), \quad p = \F_{1/2}(\mu_p-V),
\end{align*}
where $V$ denotes the electric potential, and the Fermi--Dirac integral is defined by
\begin{align*}
  \F_{1/2}(y) = \frac{2}{\sqrt{\pi}}\int_0^\infty
  \frac{\sqrt{s}}{1+e^{s-y}}ds, \quad y\in\R.
\end{align*}
In the Maxwell--Boltzmann approximation, the Fermi--Dirac integral can be approximated by the exponential, $\F_{1/2}(y)\approx\exp(y)$ for $y\ll -1$, leading to the electron flux $J_n\approx n\na(\log n-V)=\na n-n\na V$. However, the use of Fermi--Dirac statistics is more appropriate in regimes with moderate or high densities. We expect that the oxide vacancies cannot be accumulated excessively such that it is reasonable to use Blakemore statistics \cite{Bla82},
\begin{align*}
  D = \F_{-1}(\mu_D-V),\quad\mbox{where }\F_{-1}(y) = \frac{1}{1+e^{-y}},
  \ y\in\R.
\end{align*}
Although being itself an approximation of Fermi--Dirac statistics, Blackmore statistics have the advantage of restricting the oxide vacancy density to the interval $(0,1)$.  Without loss of generality, we have set the upper bound equal to one.

Introducing the inverse functions
\begin{align*}
  g(z) &= \F_{1/2}^{-1}(z)\quad\mbox{for }z\in(0,\infty), \\
  h(z) &= \F_{-1}^{-1}(z) = \log z - \log(1-z)\quad\mbox{for }z\in(0,1),
\end{align*}
the transport equations can be written in a drift--diffusion form as
\begin{align}
  \pa_t n - \diver J_n = 0, 
  &\quad J_n = n\na g(n) - n\na V, \label{1.n} \\
  \pa_t p + \diver J_p = 0, 
  &\quad J_p = -(p\na g(p)+p\na V), \label{1.p} \\
  \pa_t D + \diver J_D = 0, 
  &\quad J_D = -(D\na h(D)+D\na V)
  \quad\mbox{in }\Omega,\ t>0, \label{1.D}
\end{align}
where $\Omega\subset \R^d$ ($d\ge 1$) is a bounded domain. The electric potential is selfconsistently coupled to the charge densities by the Poisson equation
\begin{align}\label{1.V}
  \lambda^2\Delta V = n-p-D+A(x)\quad\mbox{in }\Omega,
\end{align}
where $\lambda>0$ is the (scaled) Debye length and $A(x)$ is the given dopant acceptor density. Following \cite{SBW09}, we neglect recombination--generation effects. Equations \eqref{1.n}--\eqref{1.V} are supplemented with the initial and mixed Dirichlet--Neumann boundary conditions
\begin{align}
  n(0,\cdot) = n^I, \quad p(0,\cdot) = p^I, \quad D(0,\cdot) = D^I
  &\quad\mbox{in }\Omega, \label{1.ic} \\
  n = \bar{n}, \quad p=\bar{p}, \quad V=\bar{V} &\quad\mbox{on }
  \Gamma_D,\ t>0, \label{1.Dbc} \\
  J_n\cdot\nu = J_p\cdot\nu = \na V\cdot\nu = 0 &\quad\mbox{on }
  \Gamma_N,\ t>0, \label{1.noflux1} \\
  J_D\cdot\nu = 0 &\quad\mbox{on }\pa\Omega, t>0. \label{1.noflux2}
\end{align}
Here, $\Gamma_D$ is the union of Ohmic contacts and $\Gamma_N$ models the insulating boundary parts. Since the oxide vacancies are supposed not to leave the domain, we impose no-flux boundary conditions for $D$ on the whole boundary. These boundary conditions are usually used in the literature \cite{GSTD13,SBW09}. 

The aim of this paper is to prove (i) the existence of global weak solutions $(n,p,D,V)$ to \eqref{1.n}--\eqref{1.noflux2} and (ii) the regularity $n,p,D\in L^\infty(0,T;L^\infty(\Omega))$ for any $T>0$. 

\subsection{Mathematical difficulties}

The misfit of the boundary conditions for $(n,p)$ on the one hand and for $D$ on the other hand gives the first main mathematical difficulty. A second difficulty comes from the fact that we consider three species instead of two charge carriers as done in many papers \cite{GaGr89,GlLi19,Jue96}. Indeed, the two-species case allows one to exploit a monotonicity property of the drift term such that the quadratic nonlinearity can be handled \cite{GaGr89}. For more than two species, one may use Gagliardo--Nirenberg estimates, but this is possible in two space dimensions only \cite{GlHu05}. This issue can be overcome by $W^{1,r}_{\rm loc}(\Omega)$ estimates with $r>1$ \cite{JJZ23}, but leading to very weak solutions and boundedness of solutions in two space dimensions only. The third difficulty are the nonlinearities from the Fermi--Dirac statistics, which complicates the estimates. We prove in Appendix \ref{sec.app} that 
\begin{align}\label{1.Dg}
  g'(z) \sim z^{-1}\mathrm{1}_{\{z\le\F_{1/2}(0)\}}
  + z^{-1/3}\mathrm{1}_{\{z>\F_{1/2}(0)\}}\quad\mbox{for }z>0,
\end{align} 
where $A\sim B$ means that there exist constants $C_1,C_2>0$ such that $C_1A\le B\le C_2A$. In particular, the nonlinear diffusion $n\na g(n)=ng'(n)\na n$ can be approximated by $\na n$ in the low-density regime and by $(3/5)\na n^{5/3}$ in the high-density regime. On the other hand, the Blakemore statistics gives to the diffusion $D\na h(D)=-\na\log(1-D)$, which exhibits a singularity at $D=1$. The technical issues associated to this singularity are overcome by using some ideas from \cite{CCFG21}, developed for a one-species model.

\subsection{State of the art and key ideas}

There are only a few works dealing with the drift--diffusion equations for more than two species. General existence results for an $n$-species model have been proved in \cite{HPR19} for an abstract drift operator satisfying smoothing conditions. In \cite{ChLu95,GlHu97,GlHu05,GlLi19}, the existence of global weak solutions was shown in at most two space dimensions. The three-dimensional case was investigated in \cite{BFPR14} using Robin boundary conditions for the electric potential. In the work \cite{BGN22}, the function $n\na g(n)=\na(n+\eta n^q)$ with $\eta>0$ and $q\ge 4$ was chosen to regularize the diffusion term, which allows for an analysis in three space dimensions. The paper \cite{GaGr96} studies the drift--diffusion equations with Fermi--Dirac statistics but assuming inhomogeneous Neumann boundary conditions on $\pa\Omega$. A drift--diffusion system with Fermi--Dirac statistics for electrons and holes and with Blakemore statistics for the ionic vacancy carriers, modeling perovskite solar cells, was analyzed recently in \cite{AGL24} in two space dimensions. A free energy inequality for this model in three space dimensions was shown in \cite{ACFH23}. 

Our analysis is based, as in \cite{ACFH23,GaGr96}, on estimates derived from the free energy inequality. The asymptotic behavior of the Fermi--Dirac integral $\F_{1/2}$ allows for an argument similar to \cite{BGN22} but based on physical bounds. Indeed, the behavior \eqref{1.Dg} shows that the diffusion is given by
\begin{align*}
  n\na g(n) \sim n(n^{-1}+n^{-1/3})\na n 
  = \na\bigg(n + \frac35 n^{5/3}\bigg).
\end{align*}
The first term corresponds to linear diffusion, while the second term allows for higher integrability estimates. As a by-product, we are able to weaken the condition $q\ge 4$ in \cite{BGN22} to $q\ge 5/3$. (By \cite{JuVe23}, one may weaken this condition even to $q>6/5$.)

To specify the free energy inequality, we introduce the anti-derivatives of $g$ and $h$,
\begin{align}\label{1.GH}
  G(s) = \int_{\F_{1/2}(0)}^s g(z)dz, \quad
  H(s) = \int_{\F_{-1}(0)}^s h(z)dz,
\end{align}
the relative energy density
\begin{align*}
  \mathcal{G}(s|\bar{s}) = G(s) - G(\bar{s}) - G'(\bar{s})(s-\bar{s}),
  \quad s,\bar{s}\ge 0,
\end{align*}
and the free energy
\begin{align*}
  E(n,p,D,V) = \int_\Omega\bigg(\mathcal{G}(n|\bar{n}) 
  + \mathcal{G}(p|\bar{p})
  + H(D) + D\bar{V} + \frac{\lambda^2}{2}|\na(V-\bar{V})|^2\bigg)dx.
\end{align*} 
A formal computation, made rigorous in Theorem \ref{thm.ex}, shows that
\begin{align}\label{1.dEdt}
  \frac{dE}{dt}(n,p,D,V) &+ \frac12\int_\Omega\big(
  n|\na(g(n)-V)|^2 + p|\na(g(p)+V)|^2 \\
  &+ D|\na(h(D)+V)|^2\big)dx
  \le C(\bar{n},\bar{p},\bar{D},T). \nonumber
\end{align}
This yields a priori estimates for $n$, $p$ in $L^\infty(0,T;L^{5/3}(\Omega))$ and for $V$ in $L^\infty(0,T;H^1(\Omega))$. Moreover, defining $\widetilde{g}$ by $\widetilde{g}'(n)=\sqrt{n}g'(n)$, 
\begin{align*}
  |\na\widetilde{g}(n)| \le |\na\widetilde{g}(n)-\sqrt{n}\na V| 
  + \sqrt{n}|\na V|
  = \sqrt{n}|\na(g(n)-V)| + \sqrt{n}|\na V|
\end{align*}
is uniformly bounded in $L^2(0,T;L^{5/4}(\Omega))$. Unfortunately, this regularity is {\em not} sufficient to define
$n\na g(n) = \sqrt{n}\na\widetilde{g}(n)$ since $\sqrt{n}$ is bounded in $L^\infty(0,T;L^{10/3}(\Omega))$ and $3/10+4/5>1$. However, we are able to improve the regularity by an iteration argument to $\na\widetilde{g}(n)\in L^2(0,T;L^r(\Omega))$ with $r<8/5$ (see Lemma \ref{lem.est2}), which is sufficient since $3/10+5/8<1$. 

The treatment of the diffusion $D\na h(D)=-\na\log(1-D)$ is quite delicate because of the singularity at $D=1$. The idea is to approximate $L(D)=-\log(1-D)$ by regular functions $L_k$ with $k\in\N$. The identification of the limit of the sequence $L_k(D_k)$ of approximating solutions $D_k$ which converge strongly to some function $D$ is then achieved by a monotonicity argument (Minty trick); see Lemma \ref{lem.Lstar}. These ideas allow us to prove the existence of global weak solutions.

The second main result is the boundedness of weak solutions. The difficulty comes from the estimate of the quadratic drift terms, which can be overcome in the case of two species by a monotonicity argument. For more than two species, we use the Gagliardo--Nirenberg inequality to estimate this term, similarly as in \cite{GlHu97} for two space dimensions. In three dimensions, we need as in \cite{JuVe23} the elliptic regularity result $V\in W^{1,r}(\Omega)$ with $r>3$. This is possible even under mixed boundary conditions if $\Gamma_D$ and $\Gamma_N$ do not meet in a ``too wild'' manner \cite[Theorem 4.8]{DiRe15}. Then, applying an Alikakos-type iteration argument similar to \cite{JJZ23,JuVe23}, we obtain $q$-uniform estimates in $L^\infty(0,T;L^{q}(\Omega))$ for any $q<\infty$. The boundedness follows after performing the limit $q\to\infty$.

\subsection{Main results} 

First, we introduce some notation. We set $\Omega_T=\Omega\times(0,T)$ for $T>0$, denote by $\m(B)$ the measure of a set $B\subset\R^d$, and set for $1\le q\le\infty$,
\begin{align*}
  W_D^{1,q}(\Omega) = \{u\in W^{1,q}(\Omega): u=0\mbox{ on }\Gamma_D\},
  \quad H_D^1(\Omega) = W_D^{1,2}(\Omega).
\end{align*}
The function $V^I\in H_D^1(\Omega)+\bar{V}$ is the unique solution to
\begin{equation*}
  \lambda^2\Delta V^I = n^I-p^I-D^I+A(x)\mbox{ in }\Omega, \quad
  V^I=\bar{V}\mbox{ on }\Gamma_D, \quad \na V^I\cdot\nu = 0
  \mbox{ on }\Gamma_N.
\end{equation*}
Constants $C>0$ are generic and may change their value from line to line.

We impose the following assumptions.

\begin{itemize}
\item[(A1)] Domain: $\Omega \subset \R^d$ $(1\le d\le 3)$ is a bounded domain with Lipschitz boundary, $\partial \Omega = \Gamma_D \cup \Gamma_N$,  $\Gamma_D \cap \Gamma_N = \emptyset$, $\m(\Gamma_D) > 0$, and $\Gamma_N$ is relatively open in $\partial \Omega$.
\item[(A2)] Data: $T > 0$, $\lambda > 0$, $A \in L^\infty(\Omega)$.
\item[(A3)] Boundary data: $\bar{n}$, $\bar{p}$, $\bar{V} \in W^{1,\infty}(\Omega)$ with $\bar{n}$, $\bar{p} > 0$ in $\Omega$.
\item[(A4)] Initial data: $n^I$, $p^I$, $D^I \in L^2(\Omega)$ satisfy $n^I$, $p^I$, $D^I \geq 0$ in $\Omega$, $E(n^I, p^I, D^I, V^I) < \infty$. Furthermore, $\sup_\Omega D^I\le 1$ and 
\begin{align*}
  D_\Omega^I:=\frac{1}{\m(\Omega)}\int_\Omega D^I dx < 1.  
\end{align*} 
\item[(A5)] Elliptic Regularity: There exists $r>3$ such that for some constant $C > 0$ and all $f \in L^{3r/(r+3)}(\Omega)$ the weak solution $V$ to the Poisson problem
\begin{equation}\label{1.ellip}
  \Delta V = f\mbox{ in } \Omega, \quad V = \bar{V}\mbox{ on } \Gamma_D, \quad \nabla V \cdot \nu = 0\mbox{ on } \Gamma_N,
\end{equation}
satisfies the estimate
\begin{equation}\label{1.W1r}
  \| V \|_{W^{1,r}(\Omega)} \leq C \|f\|_{L^{3r/(r+3)}(\Omega)} + C.
\end{equation}
\end{itemize}

Let us discuss the assumptions. We can assume higher space dimensions in most of the estimates, but we restrict ourselves to $d\le 3$ because of the applications. The boundary data in Assumption (A3) is assumed to be independent of time to simplify the computations; time-dependent boundary data are possible, see, e.g., \cite[Sec.~2]{DGJ97}. Compared to \cite{AGL24}, we do not need pointwise positive lower bounds of the densities and we can allow for vacuum as well as saturation of the oxygen vacancy density. We only prevent  $D_\Omega^I=1$ in Assumption (A4), which would be physically unrealistic. 

The most restrictive condition is Assumption (A5). Indeed, we can only expect the regularity $V\in W^{1,r}(\Omega)$ with $r>2$ for the solution $V$ to \eqref{1.ellip} with mixed boundary conditions \cite{Gro94}. Shamir's counterexample \cite{Sha68} shows that $r<4$ is generally necessary, even for smooth domains and data. The regularity $r>3$ can be achieved under reasonable conditions on $\Gamma_D$ and $\Gamma_N$ \cite[Theorem 4.8]{DiRe15}. These conditions are satisfied if $\Gamma_D$ and $\Gamma_N$ intersect with an ``angle'' not larger than $\pi$ \cite[Prop.~3.4]{DiRe15}. Assumption (A5) is {\em not} needed for the existence result but for the proof of the boundedness of solutions.

Our first main result is the existence of global weak solutions.

\begin{theorem}[Global existence]\label{thm.ex}
Let Assumptions (A1)--(A4) hold. Then there exists a weak solution $(n,p,D,V)$ to \eqref{1.n}--\eqref{1.noflux2} satisfying $n,p\ge 0$, $0\le D<1$ a.e.\ in $\Omega_T$,
\begin{align*}
  & n,p\in L^\infty(0,T;L^{5/3}(\Omega))
  \cap L^2(0,T;W^{1,\alpha}(\Omega)),
  \quad D\in L^\infty(\Omega_T)\cap L^2(0,T;H^1(\Omega)), \\
  & n\na g(n),p\na g(p)\in L^2(0,T;L^{5/4}(\Omega)), \quad
  D\na h(D)\in L^2(0,T;L^{2}(\Omega)), \\
  & \pa_t n,\pa_t p\in L^{7/5}(0,T;W_D^{1,2\alpha/(4-\alpha)}(\Omega)), \quad
  \pa_t D\in L^2(0,T;H^1(\Omega)'), \quad V\in L^\infty(0,T;H^1(\Omega)),
\end{align*}
where $\alpha<8/5$ if $d=3$, $\alpha<2$ if $d=2$, and $\alpha=2$ if $d=1$. The fluxes are understood in the sense
\begin{align*}
  J_n &= n\na(g(n)-V)\in L^2(0,T;L^{5/4}(\Omega)), \\
  J_p &= -p\na(g(p)+pV)\in L^2(0,T;L^{5/4}(\Omega)), \\
  J_D &= -D\na(h(D)+V) = -\na\log(1-D)+D\na V\in L^2(\Omega_T).
\end{align*}
The solution satisfies the free energy inequality
\begin{align}\label{1.Eineq}
  E(n,p,D,V)(t) + \frac12\int_0^t\int_\Omega\big(&
  n|\na(g(n)-V)|^2 + p|\na(g(p)+V)|^2 \\
  &+ D|\na(h(D)+V)|^2\big)dxds
  \le C(E^I,\Lambda,T), \nonumber
\end{align}
where $E^I:=E(n^I,p^I,D^I,V^I)$, 
\begin{align*}
  \Lambda := 2\big(\|\na(g(\bar{n})-\bar{V})\|_{L^\infty(\Omega)}^2
  + \|\na(g(\bar{p})+\bar{V})\|_{L^\infty(\Omega)}^2\big),
\end{align*}
and it holds that $C(E^I,\Lambda,T)=0$ if $\Lambda=0$.
\end{theorem}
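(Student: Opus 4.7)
The plan is to establish Theorem \ref{thm.ex} by a standard regularization--approximation--compactness scheme, with the two specific novelties flagged in the introduction: the iteration argument that upgrades $\nabla\widetilde g(n)$ from $L^2(0,T;L^{5/4}(\Omega))$ to $L^2(0,T;L^{r}(\Omega))$ with $r<8/5$, and the Minty--type identification of the singular Blakemore flux. First I would introduce a regularized problem in which $h$ is replaced by smooth globally Lipschitz approximants $h_k$ (equivalently, $L(D)=-\log(1-D)$ by bounded $L_k$) and, if necessary, $g$ by a similar truncation $g_k$, so that all nonlinearities are bounded and Lipschitz. Existence at this level would be obtained by a Galerkin or Rothe--type implicit time discretization, using elliptic regularity for the Poisson problem \eqref{1.V} to close a Leray--Schauder fixed point around the map $(n,p,D)\mapsto V\mapsto(n,p,D)$.

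Next I would derive $k$-uniform bounds by mimicking the formal computation leading to \eqref{1.dEdt}. The appropriate test functions are $g_k(n_k)-V_k-(g(\bar n)-\bar V)$, $g_k(p_k)+V_k-(g(\bar p)+\bar V)$, $h_k(D_k)+V_k$, and $\partial_t V_k$ in the Poisson equation, after a shift by the Dirichlet data. Combined with the asymptotic behavior \eqref{1.Dg}, the resulting inequality delivers $n_k,p_k\in L^\infty(0,T;L^{5/3}(\Omega))$, $V_k\in L^\infty(0,T;H^1(\Omega))$, $D_k\in L^\infty(\Omega_T)$, $\nabla\widetilde g(n_k)\in L^2(0,T;L^{5/4}(\Omega))$, and $\nabla L_k(D_k)\in L^2(\Omega_T)$. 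Since this integrability is not enough to make sense of the fluxes $n_k\nabla g_k(n_k)=\sqrt{n_k}\,\nabla\widetilde g(n_k)$ in the limit (as noted in the introduction, $3/10+4/5>1$), I would then invoke Lemma \ref{lem.est2} to bootstrap $\nabla\widetilde g(n_k)$ into $L^2(0,T;L^{r}(\Omega))$ with some $r<8/5$, and analogously for $p_k$, after which $3/10+5/8<1$ closes the product.

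Third, I would obtain compactness by bounding $\partial_t n_k,\partial_t p_k$ in $L^{7/5}(0,T;W_D^{1,2\alpha/(4-\alpha)}(\Omega)')$ and $\partial_t D_k$ in $L^2(0,T;H^1(\Omega)')$ by duality against the flux estimates, and applying Aubin--Lions. This gives strong $L^2(\Omega_T)$ and almost everywhere convergence of $n_k,p_k,D_k$, whence strong convergence of $V_k$ in $L^\infty(0,T;H^1(\Omega))$ via the Poisson equation. Passage to the limit in the drift terms $n_k\nabla V_k$, $p_k\nabla V_k$, $D_k\nabla V_k$ then follows by combining strong convergence of the densities in a suitable $L^q$ with weak convergence of $\nabla V_k$ in $L^2$. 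For the nonlinear diffusive fluxes I would write $n_k\nabla g_k(n_k)=\sqrt{n_k}\,\nabla\widetilde g(n_k)$, use almost everywhere convergence of $\sqrt{n_k}$ together with its uniform $L^\infty(0,T;L^{10/3}(\Omega))$ bound, and weak convergence of $\nabla\widetilde g(n_k)$ in $L^2(0,T;L^{r}(\Omega))$ with $r<8/5$.

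The main obstacle is identifying the limit of $\nabla L_k(D_k)$ with $\nabla L(D)=-\nabla\log(1-D)$, which a priori is only known to converge weakly to some $\chi\in L^2(\Omega_T)$; here I would follow the strategy of \cite{CCFG21} and apply Lemma \ref{lem.Lstar}, a Minty--type monotonicity argument exploiting the monotonicity of $h$ and $L$, to conclude $\chi=\nabla L(D)$ and, in particular, $D<1$ almost everywhere, so that the flux $J_D=-\nabla\log(1-D)+D\nabla V$ is well--defined. Finally, the free energy inequality \eqref{1.Eineq} is recovered in the limit by weak lower semicontinuity of the dissipation and convexity--Fatou arguments on the energy, and the statement that $C(E^I,\Lambda,T)=0$ when $\Lambda=0$ is read off from the right--hand side of \eqref{1.dEdt}, which collects only boundary--shift contributions proportional to $\|\nabla(g(\bar n)-\bar V)\|_{L^\infty(\Omega)}^2$ and $\|\nabla(g(\bar p)+\bar V)\|_{L^\infty(\Omega)}^2$.
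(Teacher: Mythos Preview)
Your proposal is correct and follows essentially the same route as the paper's proof in Section~\ref{sec.ex}: a truncated approximate problem solved by Leray--Schauder, the approximate free energy inequality, the bootstrap of Lemma~\ref{lem.est2}, Aubin--Lions compactness, and the Minty argument of Lemma~\ref{lem.Lstar} for the Blakemore flux. The only technical addition worth flagging is that the paper inserts a second regularization parameter $\delta>0$ when deriving the energy inequality (since $G_k'$ is singular at the origin the admissible test function is $G_{k,\delta}'(n_k)-V_k-\cdots$, and the limit $\delta\to 0$ is taken before $k\to\infty$), but this is a standard justification step within the same strategy.
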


The property $\Lambda=0$ means that the boundary data is in thermal equilibrium. In this situation, the free energy is a Lyapunov functional. For the proof of Theorem \ref{thm.ex}, we first approximate the problem by truncating the nonlinearities (densities) in the diffusion and drift terms and prove the existence of approximate solutions by using the Leray--Schauder fixed-point theorem. The compactness of the fixed-point operator is a consequence of the approximate free energy inequality. From this inequality, we derive uniform bounds for the approximate solutions, allowing us to take the re-regularizing limit. As mentioned before, the main difficulties are the derivation of improved estimates via an iteration argument and the treatment of the singularity $D=1$. 

Our second main result is the boundedness of weak solutions.

\begin{theorem}[Boundedness]\label{thm.bound}
Let Assumptions (A1)--(A5) hold and assume that $n^I$, $p^I$, $D^I\in L^\infty(\Omega)$. Then the weak solution constructed in Theorem \ref{thm.ex} satisfies
\begin{align*}
  n,p,D\in L^\infty(\Omega_T), \quad V\in L^{\infty}(0,T;W^{1,r}(\Omega))
  \subset L^\infty(\Omega_T),
\end{align*}
where $r>3$ is given in Assumption (A5). 
\end{theorem}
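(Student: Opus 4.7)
My plan is to reduce the theorem to the single claim $n, p \in L^\infty(\Omega_T)$. The bound $D \in L^\infty(\Omega_T)$ is already contained in Theorem~\ref{thm.ex}, since $0 \leq D < 1$ a.e. Once $n, p \in L^\infty(\Omega_T)$ is available, the Poisson equation \eqref{1.V} together with Assumption~(A5) yields $V \in L^\infty(0, T; W^{1,r}(\Omega))$, and Morrey's embedding (valid since $r > 3 \geq d$) provides the pointwise bound $V \in L^\infty(\Omega_T)$. I would then prove $n, p \in L^\infty$ by an Alikakos--Moser-type iteration, in the spirit of \cite{JJZ23,JuVe23}.

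I would fix a constant $K \geq \|\bar{n}\|_{L^\infty(\Omega)}$ and set $u := (n-K)_+$. Since $n = \bar{n} \leq K$ on $\Gamma_D$, the function $u^{q-1}$ vanishes on $\Gamma_D$ for every $q \geq 2$ and is thus an admissible test function. The asymptotics \eqref{1.Dg} give the uniform lower bound $n\, g'(n) \geq c_0 > 0$ for all $n > 0$. Testing \eqref{1.n} with $u^{q-1}$, integrating by parts (the boundary integral vanishes thanks to $u^{q-1}=0$ on $\Gamma_D$ and $J_n\cdot\nu=0$ on $\Gamma_N$), and using $\nabla n = \nabla u$ on $\{n > K\}$ yields
\begin{equation*}
\frac{1}{q}\frac{d}{dt}\int_\Omega u^q\, dx + \frac{4 c_0 (q-1)}{q^2}\int_\Omega |\nabla u^{q/2}|^2\, dx \leq (q-1) \int_\Omega u^{q-2}\, n\, \nabla u \cdot \nabla V\, dx.
\end{equation*}

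For the drift, I would rewrite $u^{q-2} n \nabla u = (2/q)\, u^{q/2-1} n\, \nabla u^{q/2}$, use $n \leq u + K$, and apply H\"older's inequality with exponents $(r, 2r/(r-2), 2)$ to obtain an upper bound of the form $C\, \|\nabla V\|_{L^r(\Omega)}\, \|u^{q/2}\|_{L^{2r/(r-2)}(\Omega)}\, \|\nabla u^{q/2}\|_{L^2(\Omega)}$, up to lower-order terms in $K$. Since $r > 3$, the Gagliardo--Nirenberg inequality in $\R^d$ (for $d=3$; the lower-dimensional cases are easier) gives
\begin{equation*}
\|u^{q/2}\|_{L^{2r/(r-2)}(\Omega)} \leq C\, \|\nabla u^{q/2}\|_{L^2(\Omega)}^{3/r}\, \|u^{q/2}\|_{L^2(\Omega)}^{1 - 3/r},
\end{equation*}
and Young's inequality (admissible because $1 + 3/r < 2$) absorbs the diffusion. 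The result is a Gr\"onwall-type differential inequality
\begin{equation*}
\frac{d}{dt}\|u\|_{L^q(\Omega)}^q + c_1 \|\nabla u^{q/2}\|_{L^2(\Omega)}^2 \leq C\, q^{\gamma}\, \bigl(1 + \|\nabla V\|_{L^r(\Omega)}^{2r/(r-3)}\bigr)\, \|u\|_{L^q(\Omega)}^q + \text{l.o.t.},
\end{equation*}
with an exponent $\gamma$ independent of $q$.

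To activate (A5), we need $n, p \in L^\infty(0,T; L^{3r/(r+3)}(\Omega))$. Theorem~\ref{thm.ex} only supplies $L^{5/3}$, and $3r/(r+3) > 5/3$ once $r > 15/4$; I would therefore first run a preliminary bootstrap with weaker exponents (or intermediate elliptic regularity for $\Delta V$) to successively push $n, p$ into $L^\infty(0, T; L^q(\Omega))$ for every finite $q$. Equipped with $\nabla V \in L^\infty(0, T; L^r(\Omega))$, I then iterate the above inequality along $q_k = 2^k q_0$, using one more Gagliardo--Nirenberg step to convert $\|\nabla u^{q_k/2}\|_{L^2}^2$ into $L^{q_{k+1}}$-control; this produces a recursion $M_{k+1} \leq C_k\, M_k^{\alpha_k}$ with $M_k := \sup_t \|u(t)\|_{L^{q_k}(\Omega)}$, and with tame $C_k, \alpha_k$ the limit $k \to \infty$ gives $u \in L^\infty(\Omega_T)$, hence $n \in L^\infty(\Omega_T)$; the argument for $p$ is symmetric. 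The hard part will be (i) keeping the $q$-dependence of the constants polynomial so that the Moser sequence $\{M_k\}$ stays bounded as $k \to \infty$, and (ii) carrying out the preliminary $L^q$-bootstrap needed to upgrade the $L^{5/3}$-regularity of Theorem~\ref{thm.ex} to the $L^{3r/(r+3)}$-regularity required by (A5).
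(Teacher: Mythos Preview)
Your approach is essentially the paper's: an Alikakos--Moser iteration using powers of $n$ as test functions and the $W^{1,r}(\Omega)$ regularity of $V$ with $r>3$ to absorb the drift via Gagliardo--Nirenberg and Young. Two points where the paper is cleaner than your outline:

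First, the ``preliminary bootstrap'' you flag as the hard part is unnecessary. The paper simply takes $r=3+\eps$ with $\eps>0$ small (elliptic regularity as in Assumption~(A5) for some $r>3$ interpolates with the standard $H^1$ estimate to give it for all smaller exponents), so that $3r/(r+3)\le 5/3$; then the $L^\infty(0,T;L^{5/3}(\Omega))$ bound already supplied by Theorem~\ref{thm.ex} feeds directly into \eqref{1.W1r}, and $\na V\in L^\infty(0,T;L^r(\Omega))$ is available \emph{before} the iteration begins.

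Second, your Gagliardo--Nirenberg step interpolates $\|u^{q/2}\|_{L^{2r/(r-2)}}$ against $\|u^{q/2}\|_{L^2}$, which keeps the right-hand side at level~$q$ and gives only a Gr\"onwall bound with constant $e^{Cq^\gamma T}$ that blows up as $q\to\infty$. The paper instead uses the superlinear piece $ng'(n)\ge cn^{2/3}$ of the diffusion, writes the drift as $n^{(3q+1)/6}\na V\cdot\na n^{(3q+5)/6}$, and interpolates against $\|n^{(3q+5)/6}\|_{L^1}$; this yields the downward recursion $q_{k-1}=(3q_k+2)/6$ in a single stroke. Your ``one more GN step'' can be made to work too, but it should interpolate against $\|u^{q/2}\|_{L^1}=\|u\|_{L^{q/2}}^{q/2}$ (giving $q_{k-1}=q_k/2$) rather than trying to extract sup-in-time $L^{q_{k+1}}$ control from the time-integrated dissipation.
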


The restriction to three space dimensions comes from regularity \eqref{1.W1r}. The boundedness result is not surprising in view of \cite[Theorem 2]{JuVe23}. Indeed, since $ng'(n)\sim 1+n^{2/3}$, the diffusion term contains the porous-medium term $\na n^{5/3}$, and it is proved in \cite{JuVe23} that this nonlinear diffusion leads to an improvement of the integratibility of the densities up to $L^\infty(\Omega)$. The idea is first to prove that $n,p\in L^\infty(0,T;L^2(\Omega))$. This is used as the starting point of a recursion showing that $n,p\in L^\infty(0,T;L^q(\Omega))$ for any $q<\infty$, but with bounds that may depend on $q$. This allows us to use $n^q-\bar{n}^q$, $p^q-\bar{p}^q$ as test functions in the weak formulations to \eqref{1.n}, \eqref{1.p}, respectively. By an Alikakos iteration, it turns out that the $L^\infty(0,T;L^q(\Omega))$ bounds are independent of $q$, and we can pass to the limit $q\to\infty$ to conclude. To reduce the technicalities and since the first parts of the proof are technically similar to \cite[Sec.~3]{JuVe23}, we detail only the last part of the proof (the Alikakos argument). 

\begin{remark}[Generalization]\rm
Our results hold for an arbitrary number of charged particles, since we use the Poisson equation only through the norm estimates for $V$ and $\na V$. In particular, we can consider the transport equations
\begin{align*}
  \pa_t u_i &= \diver(u_i\na g(u_i) + z_iu_i\na V), \quad i\in I, \\
  \pa_t u_i &= \diver(u_i\na h(u_i) + z_iu_i\na V), \quad i\in I_0, \\
  \lambda^2\Delta V &= -\sum_{i\in I\cup I_0}z_iu_i + A(x)\quad
  \mbox{in }\Omega,\ t>0,
\end{align*}
where $z_i\in\R$ are the particle charges, $I,I_0\subset\N$ are some index sets, and the initial and boundary conditions are as in \eqref{1.ic}--\eqref{1.noflux2}.
\qed\end{remark}

The paper is organized as follows. Theorem \ref{thm.ex} and \ref{thm.bound} are proved in Sections \ref{sec.ex} and \ref{sec.bound}, respectively. Auxiliary inequalities involving Fermi--Dirac integrals  are proved in Appendix \ref{sec.app}. We also need a nonlinear version of the Poincar\'e--Wirtinger inequality, which is shown in Appendix \ref{sec.poincare}.


\section{Proof of Theorem \ref{thm.ex}}\label{sec.ex}

We prove the existence of global weak solutions to \eqref{1.n}--\eqref{1.noflux2}. To this end, we truncate the coefficients in the parabolic equations with parameter $k\in\N$, solve the corresponding approximate problem, derive uniform estimates from an approximate free energy inequality, and pass to the limit $k\to\infty$. 

\subsection{Approximate problem}

We introduce for $k\in\N$ and $z\in\R$ the truncations $T_k(z)=\max\{0,\min\{k,z\}\}$ and
\begin{align*}
  S_k^1(z) = \begin{cases}
  1 &\mbox{for }z\le 0, \\
  zg'(z) &\mbox{for }0<z\le k, \\
  k^{2/3}z^{1/3}g'(z) &\mbox{for }z>k,  
  \end{cases} \quad
  S_k^2(z) = \begin{cases}
  1 &\mbox{for }z\le 0, \\
  zh'(z) &\mbox{for }0<z\le k/(k+1), \\
  1+k &\mbox{for }z>k/(k+1).
  \end{cases}
\end{align*}
The functions $S_k^1$ and $S_k^2$ are continuous, bounded, and strictly positive on $\R$ noting that $zh'(z)=1/(1-z)$ for $z\in(0,1)$. The approximate problem reads as follows:
\begin{align}
  \pa_t n_k &= \diver\big(S_k^1(n_k)\na n_k - T_k(n_k)\na V_k\big), 
  \label{2.nk} \\
  \pa_t p_k &= \diver\big(S_k^1(p_k)\na p_k + T_k(p_k)\na V_k\big), 
  \label{2.pk} \\
  \pa_t D_k &= \diver\big(S_k^2(D_k)\na D_k + T_{k/(k+1)}(D_k)\na V_k
  \big), \label{2.Dk} \\
  \lambda^2\Delta V_k &= n_k-p_k-D_k+A(x)\quad\mbox{in }\Omega,\ t>0, 
  \label{2.Vk}
\end{align}
with the initial and boundary conditions \eqref{1.ic}--\eqref{1.noflux2}, where $(n,p,D,V)$ is replaced by $(n_k,p_k,$ $D_k,V_k)$. Clearly, if $k\to\infty$, we recover formulation \eqref{1.n}--\eqref{1.D}. The truncation $T_{k/(k+1)}(D_k)$ is chosen since we expect that the limit $D$ of $D_k$ satisfies $D<1$ a.e.

We show the existence of solutions to \eqref{2.nk}--\eqref{2.Vk} by using a fixed-point argument. For this, let $(n^*,p^*,D^*)\in L^2(\Omega_T)^3$ and $\sigma\in[0,1]$. We apply \cite[Theorem 23.A]{Zei90} to infer that the linearized problem
\begin{align}
  \pa_t n &= \diver\big(S_k^1(n^*)\na n - \sigma T_k(n^*)\na V\big), 
  \label{2.linn} \\
  \pa_t p &= \diver\big(S_k^1(p^*)\na p + \sigma T_k(p^*)\na V\big), 
  \label{2.linp} \\
  \pa_t D &= \diver\big(S_k^2(D^*)\na D + \sigma T_{k/(k+1)}(D^*)\na V
  \big), \label{2.linD} \\
  \lambda^2\Delta V &= n^*-p^*-D^*+\sigma A(x)
  \quad\mbox{in }\Omega,\ t>0, \label{2.linV}
\end{align}
with the initial and boundary conditions
\begin{align*}
  n(0,\cdot) = \sigma n^I, \quad p(0,\cdot) = \sigma p^I, 
  \quad D(0,\cdot) = \sigma D^I
  &\quad\mbox{in }\Omega, \\
  n = \sigma\bar{n}, \quad p=\sigma\bar{p}, \quad 
  V=\sigma\bar{V} &\quad\mbox{on }
  \Gamma_D,\ t>0, \\
  \na n\cdot\nu = \na p\cdot\nu = \na V\cdot\nu = 0 &\quad\mbox{on }
  \Gamma_N,\ t>0, \\
  \big(S_k^2(D^*)\na D + \sigma T_{k/(k+1)}(D^*)\na V
  \big)\cdot\nu = 0 &\quad\mbox{on }\pa\Omega, t>0,
\end{align*}
has a unique solution $(n,p,D,V)\in L^2(0,T;H^1(\Omega))^4$ such that $n,p,D\in H^1(0,T;H^1_D(\Omega)')$. This defines the fixed-point operator $F:L^2(\Omega)^3\times[0,1]\to L^2(\Omega_T)^3$, $(n^*,p^*,D^*;\sigma)\mapsto (n,p,D)$. Standard arguments show that $F$ is continuous and satisfies $F(n^*,p^*,D^*;0)=(0,0,0)$. To apply the Leray--Schauder fixed-point theorem, we need to find a uniform bound for all fixed points of $F(\cdot,\cdot,\cdot;\sigma)$. 

\begin{lemma}\label{lem.FP}
Let $(n,p,D)$ be a fixed point of $F(\cdot,\cdot,\cdot;\sigma)$, where $\sigma\in[0,1]$. Then $(n,p,D)$ is bounded in $L^\infty(0,T;L^2(\Omega))\cap L^2(0,T;H^1(\Omega))$ uniformly in $\sigma$.
\end{lemma}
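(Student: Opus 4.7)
The plan is a standard energy estimate, using the fact that for fixed $k$ the truncated coefficients $S_k^1$, $S_k^2$ are uniformly bounded above and below by strictly positive constants $c_k, C_k$, and the drift multipliers $T_k$, $T_{k/(k+1)}$ are uniformly bounded by $k$. The $\sigma$-dependence is harmless since $\sigma\in[0,1]$. Let me denote the generic constant depending on $k$, the data and $T$ (but not on $\sigma$) by $C_k$.

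First I would test \eqref{2.nk} with $n-\sigma\bar{n}\in H^1_D(\Omega)$, \eqref{2.pk} with $p-\sigma\bar{p}\in H^1_D(\Omega)$, and \eqref{2.Dk} with $D$ itself (which is legitimate since the no-flux condition holds on all of $\pa\Omega$). Using that $\pa_t\bar{n}=\pa_t\bar{p}=0$, the LHS in the first two tests produces $\tfrac{1}{2}\tfrac{d}{dt}\|n-\sigma\bar{n}\|_{L^2}^2$ and $\tfrac{1}{2}\tfrac{d}{dt}\|p-\sigma\bar{p}\|_{L^2}^2$, while the third produces $\tfrac{1}{2}\tfrac{d}{dt}\|D\|_{L^2}^2$. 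The diffusive part gives the good terms
\begin{align*}
  \int_\Omega S_k^1(n)|\na n|^2 + \int_\Omega S_k^1(p)|\na p|^2
  + \int_\Omega S_k^2(D)|\na D|^2 \;\ge\; c_k\big(\|\na n\|_{L^2}^2+\|\na p\|_{L^2}^2+\|\na D\|_{L^2}^2\big).
\end{align*}
The remaining contributions are of three types: (i) a cross-term between $S_k^1(n)\na n$ and $\sigma\na\bar{n}$, estimated by $\eps\|\na n\|_{L^2}^2 + C_k\|\na\bar{n}\|_{L^\infty}^2$; (ii) a drift-term $\sigma\int T_k(n)\na V\cdot\na(n-\sigma\bar{n})$, bounded by $\eps\|\na n\|_{L^2}^2 + C_k\|\na V\|_{L^2}^2 + C_k$; and (iii) analogous terms for $p$ and $D$.

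Next I would bound $\na V$ via the Poisson equation \eqref{2.linV}. Writing $V=\sigma\bar{V}+W$ with $W\in H^1_D(\Omega)$ and testing the equation for $W$ with $W$ itself (using mixed boundary conditions and $\m(\Gamma_D)>0$ so that Poincaré applies), one obtains
\begin{align*}
  \|\na V\|_{L^2(\Omega)}^2 \le C\big(\|n\|_{L^2}^2+\|p\|_{L^2}^2+\|D\|_{L^2}^2+\|A\|_{L^2}^2+\|\bar{V}\|_{W^{1,\infty}}^2\big).
\end{align*}
Substituting this into the drift terms and choosing $\eps$ small enough to absorb the $\|\na n\|_{L^2}^2$, $\|\na p\|_{L^2}^2$, $\|\na D\|_{L^2}^2$ contributions into the diffusive lower bound, and summing the three tested identities, I arrive at a differential inequality of the form
\begin{align*}
  \frac{d}{dt}\Phi(t) + \frac{c_k}{2}\big(\|\na n\|_{L^2}^2+\|\na p\|_{L^2}^2+\|\na D\|_{L^2}^2\big) \;\le\; C_k\big(1+\Phi(t)\big),
\end{align*}
where $\Phi(t):=\|n-\sigma\bar{n}\|_{L^2}^2+\|p-\sigma\bar{p}\|_{L^2}^2+\|D\|_{L^2}^2$. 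Gronwall's lemma applied on $[0,T]$, together with $\Phi(0)\le\sigma^2(\|n^I\|_{L^2}^2+\|p^I\|_{L^2}^2+\|D^I\|_{L^2}^2)\le C$ uniformly in $\sigma$, yields the $L^\infty(0,T;L^2(\Omega))$ bound; integrating in time then yields the $L^2(0,T;H^1(\Omega))$ bound (using once more the Dirichlet trace and Poincaré's inequality for $n,p$, and noting that $D$ is controlled in $L^2$ so its full $H^1$ norm is controlled).

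I do not expect a serious obstacle here: the truncations are specifically designed so that \eqref{2.linn}--\eqref{2.linV} behaves like a linear system with bounded coefficients for fixed $k$, and the only mildly delicate point is keeping track of the fact that the Poisson estimate re-injects $\|n\|_{L^2}^2$, $\|p\|_{L^2}^2$, $\|D\|_{L^2}^2$ into the drift terms, which is precisely why the three estimates must be summed before invoking Gronwall. The $\sigma$-uniformity is automatic since all dependence on $\sigma$ enters through factors bounded by $1$.
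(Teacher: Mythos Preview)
Your proposal is correct and follows essentially the same route as the paper: test the three parabolic equations with $n-\sigma\bar n$, $p-\sigma\bar p$, $D$, use the uniform positive lower bound $c(k)$ on $S_k^1$, $S_k^2$ together with the boundedness of $T_k$, control $\|\na V\|_{L^2}^2$ from the Poisson equation by $C+C(\|n\|_{L^2}^2+\|p\|_{L^2}^2+\|D\|_{L^2}^2)$, sum, and apply Gronwall. The only cosmetic slip is that $\Phi(0)$ equals $\sigma^2(\|n^I-\bar n\|_{L^2}^2+\|p^I-\bar p\|_{L^2}^2+\|D^I\|_{L^2}^2)$ rather than what you wrote, but this is of course still uniformly bounded in $\sigma$.
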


\begin{proof}
Since the proof is similar to that one of \cite[Lemma 2.1]{JJZ23}, we only sketch it. Let $(n^*,p^*,D^*)=(n,p,D)$ be a fixed point of $F(\cdot,\cdot,\cdot;\sigma)$. We use the test function $V-\sigma\bar{V}$ in the weak formulation of \eqref{2.linV} and apply the Young and Poincar\'e inequality to find that 
\begin{align*}
  \int_0^T\int_\Omega|\na V|^2 dxdt \le C + C\int_0^T\int_\Omega
  (n^2+p^2+D^2)dxdt,
\end{align*}
where $C>0$ is a constant independent of $(n,p,D,\sigma)$. Next, we use the test function $n-\sigma\bar{n}$ in the weak formulation of \eqref{2.linn} and take into account that $S_k^1(n)\ge c(k)>0$ and $S_k^2(n)\ge 1$. Then, with the Young inequality,
\begin{align*}
  \frac12\int_\Omega&(n(t)-\sigma\bar{n})^2 dx
  - \frac12\int_\Omega(n^I-\sigma\bar{n})^2 dx
  + \int_0^t\int_\Omega|\na n|^2 dxds \\
  &\le C + C(k)\int_0^t\int_\Omega|\na V|^2 dxds 
  \le C + C\int_0^t\int_\Omega(n^2+p^2+D^2)dxds.
\end{align*}
We derive similar estimates when using $p-\sigma\bar{p}$ and $D$ in the weak formulations of \eqref{2.linp} and \eqref{2.linD}, respectively. Adding these estimates yields
\begin{align*}
  \int_\Omega\big(n(t)^2+p(t)^2+D(t)^2\big)dx
  &+ \int_0^t\int_\Omega\big(|\na n|^2+|\na p|^2+|\na D|^2\big)dxds \\
  &\le C + C\int_0^t\int_\Omega(n^2+p^2+D^2)dxds.
\end{align*}
We deduce from Gronwall's lemma $\sigma$-uniform bounds for $(n,p,D)$ in $L^\infty(0,T;L^2(\Omega))\cap L^2(0,T;H^1(\Omega))$. 
\end{proof}

The bounds in Lemma \ref{lem.FP} imply uniform estimates for $(\pa_t n,\pa_t p,\pa_t D)$ in $L^2(0,T;H^1_D(\Omega)')$. By the Aubin--Lions lemma, the embedding $L^2(0,T;H^1(\Omega))\cap H^1(0,T;H^1_D(\Omega)')\hookrightarrow L^2(\Omega_T)$ is compact. Thus, $F:L^2(\Omega_T)^3\times[0,1]\to L^2(\Omega_T)^3$ is compact. The assumptions of the Leray--Schauder fixed-point theorem are satisfied, and we conclude the existence of a fixed point of $F(\cdot,\cdot,\cdot;1)$, i.e.\ a solution to \eqref{2.nk}--\eqref{2.Vk} and \eqref{1.ic}--\eqref{1.noflux2}. We summarize:

\begin{lemma}[Existence for the approximate problem]\label{lem.approx}
\sloppy Let Assumptions (A1)--(A4) hold. Then there exists a weak solution to \eqref{2.nk}--\eqref{2.Vk} with initial and boundary conditions \eqref{1.ic}--\eqref{1.noflux2}. 
\end{lemma}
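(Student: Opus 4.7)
The plan is to assemble the ingredients laid out just before the statement of Lemma~\ref{lem.approx} into a direct application of the Leray--Schauder fixed-point theorem to the operator $F:L^2(\Omega_T)^3\times[0,1]\to L^2(\Omega_T)^3$. Since $S_k^1$ and $S_k^2$ are continuous, uniformly bounded, and bounded below by a positive constant $c(k)>0$, and since $T_k$ and $T_{k/(k+1)}$ are bounded and continuous, the parabolic equations \eqref{2.linn}--\eqref{2.linD} are coercive linear problems with measurable bounded coefficients, while \eqref{2.linV} is a linear elliptic problem with $L^2(\Omega)$ right-hand side. The solvability of the former via \cite[Theorem 23.A]{Zei90} and of the latter by Lax--Milgram (after subtracting the Dirichlet lift $\sigma\bar{V}$) therefore makes $F$ well-defined.

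Next, I would verify in order the three Leray--Schauder hypotheses. The identity $F(\cdot,\cdot,\cdot;0)=(0,0,0)$ is immediate: at $\sigma=0$ the initial data, Dirichlet data, and the source in the Poisson equation all vanish, forcing $V\equiv 0$ and then $(n,p,D)\equiv(0,0,0)$ by uniqueness of the linearized parabolic problems. For continuity of $F$, given $(n_m^*,p_m^*,D_m^*)\to(n^*,p^*,D^*)$ in $L^2(\Omega_T)^3$, I would extract an a.e.\ convergent subsequence and use the continuity and uniform boundedness of $S_k^i,\,T_k,\,T_{k/(k+1)}$ together with dominated convergence to obtain strong $L^q(\Omega_T)$ convergence of all coefficients for every $q<\infty$. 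Strong convergence $V_m\to V$ in $L^2(0,T;H^1(\Omega))$ follows from \eqref{2.linV}, and a standard energy estimate combined with weak--strong passage in the drift terms identifies the limit of $F$ on the subsequence; a subsequence-of-subsequence argument promotes this to convergence of the full sequence. The uniform boundedness of fixed points of $F(\cdot,\cdot,\cdot;\sigma)$ in $L^2(\Omega_T)^3$ is exactly the content of Lemma~\ref{lem.FP}.

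For compactness, I would combine the $L^\infty(0,T;L^2(\Omega))\cap L^2(0,T;H^1(\Omega))$ bound from Lemma~\ref{lem.FP} with the boundedness of the truncated coefficients to derive, from \eqref{2.linn}--\eqref{2.linD}, uniform bounds on $\pa_t n,\pa_t p$ in $L^2(0,T;H_D^1(\Omega)')$ and on $\pa_t D$ in $L^2(0,T;H^1(\Omega)')$. The Aubin--Lions compact embedding $L^2(0,T;H^1(\Omega))\cap H^1(0,T;H_D^1(\Omega)')\hookrightarrow L^2(\Omega_T)$ then yields compactness of $F$. Leray--Schauder with $\sigma=1$ produces a fixed point of $F(\cdot,\cdot,\cdot;1)$, which by construction is a weak solution of \eqref{2.nk}--\eqref{2.Vk} satisfying \eqref{1.ic}--\eqref{1.noflux2}.

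No genuine analytical obstacle appears at this stage, since all nonlinearities are bounded by the truncation parameter $k$ and bounded away from degeneracy. The main care required is merely bookkeeping of the mixed boundary conditions: using test spaces $H_D^1(\Omega)$ for the weak formulations of the $n,p,V$ equations and $H^1(\Omega)$ for the $D$ equation (which carries no-flux conditions on the whole of $\pa\Omega$), and handling the Dirichlet lift in the Poisson problem so that $V-\sigma\bar V\in H_D^1(\Omega)$. The real work of the paper, namely $k$-uniform estimates and the passage $k\to\infty$, is deferred to the subsequent sections.
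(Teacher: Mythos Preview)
Your proposal is correct and follows essentially the same route as the paper, which assembles well-posedness of the linearized problem via \cite[Theorem 23.A]{Zei90}, the $\sigma$-uniform bounds of Lemma~\ref{lem.FP}, Aubin--Lions compactness, and the Leray--Schauder theorem. One minor slip: at $\sigma=0$ the right-hand side of \eqref{2.linV} is $n^*-p^*-D^*$, not zero, so $V$ need not vanish; however, this is irrelevant because the drift terms carry the factor $\sigma$, and your subsequent appeal to uniqueness of the linear parabolic problems already yields $F(\cdot,\cdot,\cdot;0)=(0,0,0)$ without any information on $V$.
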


The solution $(n_k,p_k,D_k)$ to \eqref{2.nk}--\eqref{2.Vk} is componentwise nonnegative. Indeed, using the test function $n_k^-=\min\{0,n_k\}$ in the weak formulation of \eqref{2.nk}, we have
\begin{align*}
  \frac12\int_\Omega(n_k^-)(t)^2 dx 
  + \int_0^t\int_\Omega S_k^1(n_k)|\na n_k^-|^2 dxds
  = \int_0^t\int_{\{n_k<0\}} T_k(n_k)\na V_k\cdot\na n_k dxds = 0,
\end{align*}
since $T_k(n_k)=0$ for $n_k<0$, showing that $n_k^-(t)=0$ and consequently $n_k(t)\ge 0$ for $t>0$. We note that the mass of the oxide vacancies is conserved,
\begin{align*}
  \int_\Omega D_k(t)dx = \int_\Omega D^I dx\quad\mbox{for }t>0,
\end{align*}
while this is generally not the case for the electron and hole densities because of the Dirichlet boundary conditions. 


\subsection{Approximate energy inequality}

We derive the discrete analog of the free energy inequality \eqref{1.dEdt}. Similarly as in \cite[Sec.~2.3]{JJZ23}, we define for $0<\delta<\F_{1/2}(0)$ the approximations
\begin{equation}\label{2.GHkdelta}
\begin{aligned}
  G_{k,\delta}(s) &= \int_{\F_{1/2}(0)}^s\int_{\F_{1/2}(0)}^y
  \frac{S_k^1(z)}{T_k(z)+\delta}dzdy, \quad
  \widetilde{g}_{k,\delta}(s) = \int_0^s
  \frac{S_k^1(y)}{\sqrt{T_k(y)+\delta}}dy, \\
  H_{k,\delta}(s) &= \int_{\F_{-1}(0)}^s\int_{\F_{-1}(0)}^y
  \frac{S_k^2(z)}{T_{k/(k+1)}(z)+\delta}dzdy, \quad
  \widetilde{h}_{k,\delta}(s) = \int_0^s
  \frac{S_k^2(y)}{\sqrt{T_{k/(k+1)}(y)+\delta}}dy.
\end{aligned}
\end{equation}
Recalling that $S_k^1(z)\to zg'(z)$, $S_k^2(z)\to zh'(z)$, and $T_k(z)\to z$ pointwise as $k\to\infty$, the functions $G_{k,\delta}$ and $H_{k,\delta}$ approximate the anti-derivatives of $g$ and $h$, respectively (see \eqref{1.GH}), while $\widetilde{g}_{k,\delta}$ and $\widetilde{h}_{k,\delta}$ approximate
\begin{align*}
  \widetilde{g}(s) &:= \int_{\F_{1/2}(0)}^{s}\sqrt{z}g'(z)dz, \\
  \widetilde{h}(s) &:= \int_{\F_{-1}(0)}^s\sqrt{z}h'(z)dz
  = 2\tanh^{-1}(\sqrt{s}) - 2\tanh^{-1}(1/\sqrt{2}),
\end{align*}
respectively, since $\F_{-1}(0)=1/2$. These definitions yield the following chain rules:
\begin{equation}\label{2.chainrule}
\begin{aligned}
  S_k^1(n_k)\na n_k &= \sqrt{T_k(n_k)+\delta}
  \na \widetilde{g}_{k,\delta}(n_k), \\ 
  S_k^2(D_k)\na D_k &= \sqrt{T_{k/(k+1)}(D_k)+\delta}
  \na \widetilde{h}_{k,\delta}(D_k), 
\end{aligned}
\end{equation}
and similarly for $p_k$ instead of $n_k$. They are the truncated analogs of the chain rules $n_k g'(n_k)\na n_k=\sqrt{n_k}\na\widetilde{g}(n_k)$ and $D_k h'(n_k)\na D_k=\sqrt{D_k}\na\widetilde{h}(D_k)$. Choosing $\delta=0$ in \eqref{2.chainrule}, we see that the approximate fluxes can be formulated as
\begin{equation}\label{2.fluxk}
\begin{aligned}
  S_k^1(n_k)\na n_k - T_k(n_k)\na V_k 
  &= \sqrt{T_k(n_k)}\big(\na\widetilde{g}_k(n_k)
  - \sqrt{T_k(n_k)}\na V_k\big), \\
  S_k^1(p_k)\na p_k + T_k(p_k)\na V_k 
  &= \sqrt{T_k(p_k)}\big(\na\widetilde{g}_k(p_k)
  + \sqrt{T_k(p_k)}\na V_k\big), \\
  S_k^2(D_k)\na D_k + T_k(D_k)\na V_k 
  &= \sqrt{T_k(D_k)}\big(\na\widetilde{h}_k(D_k)
  + \sqrt{T_k(D_k)}\na V_k\big).
\end{aligned}
\end{equation}

Next, we define for $s,\bar{s}\ge 0$ the approximate relative energies
\begin{align}\label{2.relener}
  \mathcal{G}_{k,\delta}(s|\bar{s}) 
  = G_{k,\delta}(s) - G_{k,\delta}(\bar{s})
  - G_{k,\delta}'(\bar{s})(s-\bar{s}), \quad
  \mathcal{H}_{k,\delta}(s) = H_{k,\delta}(s) + s\bar{V}
\end{align}
and the approximate free energy
\begin{align}\label{2.Ek}
  E_{k,\delta}(n_k,p_k,D_k,V_k) = \int_\Omega\bigg(
  \mathcal{G}_{k,\delta}(n_k|\bar{n}) 
  + \mathcal{G}_{k,\delta}(p_k|\bar{p}) + \mathcal{H}_{k,\delta}(D_k)
  + \frac{\lambda^2}{2}|\na(V-\bar{V})|^2\bigg)dx.
\end{align}
We set
\begin{align}
  E_{k,\delta}^I &:= E_{k,\delta}(n^I,p^I,D^I,V^I), \label{2.EkI} \\
  \Lambda_{k,\delta} &:= 2\|\na(G'_{k,\delta}(\bar{n})-\bar{V})
  \|_{L^\infty(\Omega)}^2 + 2\|\na(G'_{k,\delta}(\bar{p})+\bar{V})
  \|_{L^\infty(\Omega)}^2. \label{2.Lambdak}
\end{align}
For the derivation of the approximate free energy inequality, we need the following lemma.

\begin{lemma}\label{lem.T53}
There exists a constant $C>0$ such that for any $k,\delta>0$ satisfying $0<\delta<\F_{1/2}(0)<k$,
\begin{align*}
  T_k(s)^{5/3} \le C(1+G_{k,\delta}(s)) \quad\mbox{for }s>0.
\end{align*}
\end{lemma}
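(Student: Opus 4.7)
The plan is to proceed by case analysis on the position of $s$ relative to the thresholds $\F_{1/2}(0)$ and $k$. When $s\le\F_{1/2}(0)$, the truncation satisfies $T_k(s)\le\F_{1/2}(0)$, so $T_k(s)^{5/3}$ is bounded by an absolute constant and the inequality is immediate (noting that $G_{k,\delta}(s)\ge 0$ follows directly from the double-integral definition in \eqref{2.GHkdelta}, where the two sign flips cancel when $s<\F_{1/2}(0)$). The real content lies in the regime $s>\F_{1/2}(0)$, which I further split into $\F_{1/2}(0)<s\le k$ and $s>k$.

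The main ingredient will be the pointwise lower bound extracted from \eqref{1.Dg}: there exists $c_1>0$ such that $g'(z)\ge c_1 z^{-1/3}$ for all $z>\F_{1/2}(0)$. On the range $\F_{1/2}(0)<z\le k$ one has $T_k(z)=z$ and $S_k^1(z)=zg'(z)$; the hypothesis $\delta<\F_{1/2}(0)<z$ gives $z/(z+\delta)\ge 1/2$ uniformly in $k$ and $\delta$, so
\begin{equation*}
\frac{S_k^1(z)}{T_k(z)+\delta}
=\frac{zg'(z)}{z+\delta}\ge\frac{1}{2}g'(z)\ge\frac{c_1}{2}z^{-1/3}.
\end{equation*}
Integrating this once from $\F_{1/2}(0)$ to $y$ and then again from $\F_{1/2}(0)$ to $s$, I will obtain a lower bound of the form $G_{k,\delta}(s)\ge c_2 s^{5/3}-c_3 s-c_4$ for all $s\in(\F_{1/2}(0),k]$, with positive constants $c_2,c_3,c_4$ depending only on $c_1$ and $\F_{1/2}(0)$, and in particular independent of $k$ and $\delta$. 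Absorbing the linear term via Young's inequality with exponents $5/3$ and $5/2$ then gives $G_{k,\delta}(s)\ge (c_2/2)s^{5/3}-C$, which rearranges into $T_k(s)^{5/3}=s^{5/3}\le C'(1+G_{k,\delta}(s))$ on this subrange.

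For $s>k$, one has $T_k(s)^{5/3}=k^{5/3}$, reducing the task to a uniform estimate at the truncation point. The integrand defining $G'_{k,\delta}(y)$ is nonnegative for $y>\F_{1/2}(0)$, so $G_{k,\delta}$ is nondecreasing on $(\F_{1/2}(0),\infty)$, and the estimate from the previous step applied at $s=k$ yields $G_{k,\delta}(s)\ge G_{k,\delta}(k)\ge (c_2/2)k^{5/3}-C$, which closes the argument. The only genuine obstacle is maintaining $k$- and $\delta$-independence of the constants, and this is secured exactly by the standing hypothesis $0<\delta<\F_{1/2}(0)<k$: the lower bound $\delta<\F_{1/2}(0)$ keeps $z/(z+\delta)$ bounded away from zero on the integration range, while the upper bound $\F_{1/2}(0)<k$ ensures that the entire truncation interval $(\F_{1/2}(0),k]$ lies in the high-density regime where the $z^{-1/3}$ asymptotic behavior from \eqref{1.Dg} applies.
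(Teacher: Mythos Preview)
Your proposal is correct and follows essentially the same route as the paper: the same three-case split on $s$ relative to $\F_{1/2}(0)$ and $k$, the same use of $g'(z)\ge c\,z^{-1/3}$ for $z>\F_{1/2}(0)$ combined with $z/(z+\delta)\ge 1/2$ (from $\delta<\F_{1/2}(0)<z$) to bound the inner integrand from below, and the same reduction of the case $s>k$ to the estimate at $s=k$ via monotonicity of $G_{k,\delta}$. The only cosmetic difference is that you invoke Young's inequality to absorb the linear term $c_3 s$, whereas the paper simply writes the inequality $G_{k,\delta}(s)\ge C T_k(s)^{5/3}-C$ directly after the explicit integration.
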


\begin{proof}
Let $0<s\le\F_{1/2}(0)$. Since $s<k$, we have $T_k(s)^{5/3}=s^{5/3}\le \F_{1/2}(0)^{5/3}\le C$. Next, let $\F_{1/2}(0)<s\le k$. Then $S_k^1(s)=sg(s)$ and, by Lemma \ref{lem.Dg},
\begin{align*}
  G_{k,\delta}(s) &= \int_{\F_{1/2}(0)}^s\int_{\F_{1/2}(0)}^y
  \frac{zg'(z)}{z+\delta}dzdy 
  \ge C\int_{\F_{1/2}(0)}^s\int_{\F_{1/2}(0)}^y
  \frac{z(z^{-1}+z^{-1/3})}{z+\delta}dzdy \\
  &\ge \frac{C}{2}\int_{\F_{1/2}(0)}^s\int_{\F_{1/2}(0)}^y z^{-1/3}dzdy,
\end{align*}
since $\delta<\F_{1/2}(0)\le z$. An integration of the right-hand side leads to
\begin{align*}
  G_{k,\delta}(s) \ge \frac{3C}{4}\bigg(\frac35 s^{5/3}
  - \F_{1/2}(0)^{2/3}s + \frac25\F_{1/2}(0)^{5/3}\bigg)
  \ge CT_k(s)^{5/3} - C.
\end{align*}
Finally, if $s>k$, we have $T_k(s)^{5/3}=k^{5/3}\le CG_{k,\delta}(k)\le CG_{k,\delta}(s)$. This finishes the proof.
\end{proof}

\begin{lemma}[Approximate free energy inequality for $E_{k,\delta}$]
\label{lem.Ekdelta}
Let Assumptions (A1)--(A4) hold and let $(n_k,p_k,D_k,V_k)$ be the weak solution constructed in Lemma \ref{lem.approx}. Then, for all $0<t<T$,
\begin{align}\label{2.Ekdeltaineq}
  E_{k,\delta}(n_k,p_k,D_k,V_k)(t) 
  &+ \frac12\int_0^t\int_\Omega\big|\na \widetilde{g}_{k,\delta}(n_k)
  - \sqrt{T_k(n_k)+\delta}\na V_k\big|^2 dxds \\
  &+ \frac12\int_0^t\int_\Omega\big|\na \widetilde{g}_{k,\delta}(p_k)
  + \sqrt{T_k(p_k)+\delta}\na V_k\big|^2 dxds \nonumber \\
  &+ \frac12\int_0^t\int_\Omega\big|\na \widetilde{h}_{k,\delta}(D_k)
  + (T_{k/(k+1)}(D_k)+\delta)^{1/2}\na V_k\big|^2 dxds \nonumber \\
  \le\,& C(E_{k,\delta}^I,\Lambda_{k,\delta},T). \nonumber 
\end{align}
The constant $C(E_{k,\delta}^I,\Lambda_{k,\delta},T)\ge 0$ vanishes if $\Lambda_{k,\delta}=0$ and $\delta=0$.
\end{lemma}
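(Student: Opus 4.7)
The plan is to test the three approximate equations \eqref{2.nk}--\eqref{2.Dk} against the entropy-type test functions
\[
\phi_n := G'_{k,\delta}(n_k) - G'_{k,\delta}(\bar n) - (V_k - \bar V), \quad
\phi_p := G'_{k,\delta}(p_k) - G'_{k,\delta}(\bar p) + (V_k - \bar V), \quad
\phi_D := H'_{k,\delta}(D_k) + V_k,
\]
sum the three resulting identities, and invoke the Poisson equation \eqref{2.Vk}. The first two test functions belong to $L^2(0,T;H^1_D(\Omega))$ (they vanish on $\Gamma_D$), and $\phi_D\in L^2(0,T;H^1(\Omega))$ is admissible because of the no-flux condition for $D_k$ on all of $\pa\Omega$. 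An Alt--Luckhaus chain rule applied to the regularity from Lemma \ref{lem.approx} yields $\pa_t n_k (G'_{k,\delta}(n_k) - G'_{k,\delta}(\bar n)) = \pa_t\mathcal G_{k,\delta}(n_k|\bar n)$ and analogues for $p_k$, $D_k$, producing the time derivative of $\int[\mathcal G_{k,\delta}(n_k|\bar n) + \mathcal G_{k,\delta}(p_k|\bar p) + \mathcal H_{k,\delta}(D_k)]dx$ plus the cross-term $\int\pa_t(-n_k + p_k + D_k)(V_k - \bar V)dx$. The latter, via the time-differentiated Poisson equation and integration by parts (using $V_k - \bar V\in H^1_D$ and $\na V_k\cdot\nu = 0$ on $\Gamma_N$), becomes $\frac{d}{dt}\frac{\lambda^2}{2}\int|\na(V_k - \bar V)|^2 dx$, so the time-derivative part of the summed identity assembles exactly into $\frac{d}{dt}E_{k,\delta}$.

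For the flux terms, I would use the identity $\na G'_{k,\delta}(n_k) = S_k^1(n_k)/(T_k(n_k)+\delta)\,\na n_k$ to rewrite the $n_k$-flux as $(T_k(n_k)+\delta)\na(G'_{k,\delta}(n_k) - V_k) + \delta\na V_k$, with analogous expressions for $p_k$ and $D_k$. Expanding $\int\na\phi_n\cdot J_n^{\mathrm{approx}}\,dx$ then produces the perfect square $\int(T_k(n_k)+\delta)|\na(G'_{k,\delta}(n_k) - V_k)|^2\,dx$, which by \eqref{2.chainrule} coincides with the first dissipation integrand in \eqref{2.Ekdeltaineq}. The remaining pieces are (i) a Dirichlet cross-term involving $\na(G'_{k,\delta}(\bar n) - \bar V)$, which Young's inequality bounds by $\tfrac14$ of the dissipation plus $\tfrac12\Lambda_{k,\delta}\int(T_k(n_k)+\delta)\,dx$, and (ii) a $\delta$-term $\delta\int\na\phi_n\cdot\na V_k\,dx$, which after Cauchy--Schwarz and Young contributes at most $\tfrac14$ of the dissipation plus $C\delta(1 + \|\na V_k\|_{L^2}^2 + \Lambda_{k,\delta})$. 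The $p_k$ contribution is treated identically; the $D_k$ contribution yields only the third dissipation integrand and a pure $\delta$-term (no Dirichlet correction, since $\phi_D$ does not subtract any boundary data). Absorbing the two quarter-portions of each dissipation into the left-hand side leaves the $\tfrac12$-prefactor in \eqref{2.Ekdeltaineq}.

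It remains to control $\int T_k(n_k)\,dx$ (and similarly for $p_k$) and $\|\na V_k\|_{L^2}^2$ by the free energy. The former follows from Lemma \ref{lem.T53}, the Young estimate $T_k \le C(1 + T_k^{5/3})$, and the expansion $G_{k,\delta}(n_k) = \mathcal G_{k,\delta}(n_k|\bar n) + G_{k,\delta}(\bar n) + G'_{k,\delta}(\bar n)(n_k - \bar n)$, in which the unweighted linear term in $n_k$ is absorbed by Young against the $5/3$-superlinear entropy. The latter is immediate from $\tfrac{\lambda^2}{2}\|\na(V_k - \bar V)\|_{L^2}^2 \le E_{k,\delta}$ and the triangle inequality around $\bar V$. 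Collecting the estimates yields an ODI of the form
\[
\frac{d}{dt}E_{k,\delta}(t) + \tfrac12\bigl(\text{three dissipation integrands}\bigr) \le C(\Lambda_{k,\delta} + \delta)(1 + E_{k,\delta}(t)),
\]
from which Gronwall's lemma delivers \eqref{2.Ekdeltaineq}. When $\Lambda_{k,\delta} = 0$ and $\delta = 0$ the right-hand side vanishes identically, and the inequality reduces to the sharp entropy--dissipation identity $E_{k,\delta}(t) + \frac12(\text{dissipations}) \le E_{k,\delta}^I$, matching the claim about the vanishing constant.

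The main obstacle is the delicate bookkeeping of the $\delta$-regularization cross-term $\delta\int\na\phi\cdot\na V_k\,dx$: it must be split through the factor $1/\sqrt{T_k+\delta}$ so that the Young estimate absorbs only a controlled fraction of the dissipation and leaves a remainder of order $\delta$ rather than order $1$, thereby preserving the vanishing of the constant as $\delta\to 0$. A secondary delicate point is the application of Lemma \ref{lem.T53} to convert the relative energy into an $L^1$ bound on $T_k(n_k)$, since the naive expansion of $\mathcal G_{k,\delta}(n_k|\bar n)$ produces an unweighted linear contribution in $n_k$ that only closes thanks to the $5/3$-superlinear coercivity of $G_{k,\delta}$ provided by Lemma \ref{lem.T53}.
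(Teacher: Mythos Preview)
Your proposal is correct and follows essentially the same route as the paper's proof: the same entropy test functions $\phi_n,\phi_p,\phi_D$, the same decomposition of the approximate fluxes as $(T_k+\delta)\na(G'_{k,\delta}-V_k)+\delta\na V_k$, the same Young-inequality bookkeeping to retain a $\tfrac12$-dissipation, the use of the Poisson equation to reconstitute $\tfrac{d}{dt}\tfrac{\lambda^2}{2}\|\na(V_k-\bar V)\|_{L^2}^2$, Lemma~\ref{lem.T53} to close the $\int T_k(n_k)\,dx$ term against the energy, and Gronwall to finish. Your remarks on the two delicate points (the $\delta$-cross-term and the passage from $G_{k,\delta}$ to $\mathcal G_{k,\delta}(\cdot|\bar n)$) match the places where the paper is careful as well.
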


\begin{proof}
We use the test function $G'_{k,\delta}(n_k)-G'_{k,\delta}(\bar{n})
- V_k + \bar{V}$ (see definition \eqref{2.GHkdelta}) in the weak formulation of \eqref{2.nk}:
\begin{align*}
  \big\langle \pa_t & n_k, G'_{k,\delta}(n_k)-G'_{k,\delta}(\bar{n})
  - V_k + \bar{V}\big\rangle \\
  &= -\int_\Omega\big(S_k^1(n_k)\na n_k - T_k(n_k)\na V_k\big)
  \cdot\na\big((G'_{k,\delta}(n_k) - V_k) - (G'_{k,\delta}(\bar{n}) 
  - \bar{V})\big)dx.
\end{align*}
We use the identities
\begin{align*}
  \big\langle \pa_t n_k, G'_{k,\delta}(n_k)-G'_{k,\delta}(\bar{n})
  \big\rangle &= \frac{d}{dt}\int_\Omega\big(G_{k,\delta}(n_k)
  - G_{k,\delta}(\bar{n}) - G'_{k,\delta}(\bar{n})(n_k-\bar{n})\big)dx, \\
  S_k^1(n_k)\na n_k - T_k(n_k)\na V_k
  &= \sqrt{T_k(n_k)+\delta}\big(\na \widetilde{g}_{k,\delta}(n_k)
  - \sqrt{T_k(n_k)+\delta}\na V_k\big) + \delta \na V_k, \\
  \na(G'_{k,\delta}(n_k)-V_k) &= \frac{\na \widetilde{g}_{k,\delta}(n_k)
  - \sqrt{T_k(n_k)+\delta}\na V_k}{\sqrt{T_k(n_k)+\delta}},
\end{align*}
which follow from the chain rules \eqref{2.chainrule}, to obtain
\begin{align}\label{2.En}
  \frac{d}{dt}&\int_\Omega\big(G_{k,\delta}(n_k)
  - G_{k,\delta}(\bar{n}) - G'_{k,\delta}(\bar{n})(n_k-\bar{n})\big)dx
  - \big\langle \pa_t n_k,V_k-\bar{V}\big\rangle \\
  &= -\int_\Omega\big|\na \widetilde{g}_{k,\delta}(n_k) 
  - \sqrt{T_k(n_k)+\delta}\na V_k\big|^2 dx \nonumber \\
  &\phantom{xx}- \int_\Omega\frac{\delta}{\sqrt{T_k(n_k)+\delta}}\na V_k
  \cdot\big(\na \widetilde{g}_{k,\delta}(n_k)
  - \sqrt{T_k(n_k)+\delta}\na V_k\big)dx \nonumber \\
  &\phantom{xx}+ \int_\Omega\sqrt{T_k(n_k)+\delta}
  \big(\na\widetilde{g}_{k,\delta}(n_k) 
  - \sqrt{T_k(n_k)+\delta}\na V_k\big)
  \cdot\na(G'_{k,\delta}(\bar{n})-\bar{V})dx \nonumber \\
  &\phantom{xx}
  + \delta\int_\Omega\na V_k\cdot\na(G'_{k,\delta}(\bar{n})-\bar{V})dx 
  \nonumber \\
  &\le -\frac12\int_\Omega\big|\na \widetilde{g}_{k,\delta}(n_k) 
  - \sqrt{T_k(n_k)+\delta}\na V_k\big|^2 dx
  + \frac{\delta}{2}\int_\Omega
  |\na(G'_{k,\delta}(\bar{n})-\bar{V})|^2 dx \nonumber \\
  &\phantom{xx} + \frac{3\delta}{2}\int_\Omega|\na V_k|^2 dx
  + \|\na(G'_{k,\delta}(\bar{n})-\bar{V})\|_{L^\infty(\Omega)}^2
  \int_\Omega(T_k(n_k)+\delta)dx, \nonumber 
\end{align}
where we used the inequality $\delta/(\sqrt{T_k(n_k)+\delta})\le\sqrt{\delta}$ and Young's inequality in the last step. Similarly, the test function $G'_{k,\delta}(p_k)-G'_{k,\delta}(\bar{p})
+ V_k - \bar{V}$ in the weak formulation of \eqref{2.pk} leads to
\begin{align*}
  \frac{d}{dt}&\int_\Omega\big(G_{k,\delta}(p_k)
  - G_{k,\delta}(\bar{p}) - G'_{k,\delta}(\bar{p})(p_k-\bar{p})\big)dx
  + \big\langle \pa_t p_k,V_k-\bar{V}\big\rangle \\
  &\le -\frac12\int_\Omega\big|\na \widetilde{g}_{k,\delta}(p_k) 
  + \sqrt{T_k(p_k)+\delta}\na V_k\big|^2 dx 
  + \frac{\delta}{2}\int_\Omega
  |\na(G'_{k,\delta}(\bar{p})+\bar{V})|^2 dx \nonumber \\
  &\phantom{xx} + \frac{3\delta}{2}\int_\Omega|\na V_k|^2 dx
  + \|\na(G'_{k,\delta}(\bar{p})+\bar{V})\|_{L^\infty(\Omega)}^2
  \int_\Omega(T_k(p_k)+\delta)dx. \nonumber 
\end{align*}
Similarly, with the test function $H'_{k,\delta}(D_k)+V_k$ in the weak formulation of \eqref{2.Dk},
\begin{align}\label{2.ED}
  \frac{d}{dt}&\int_\Omega \mathcal{H}_{k,\delta}(D_k)dx
  + \langle\pa_t D_k,V_k-\bar{V}\rangle 
  = \langle\pa_t D_k,H'_{k,\delta}(D_k) + V_k\rangle \\
  &\le -\frac12\int_\Omega\big|\na \widetilde{h}_{k,\delta}(D_k) 
  + (T_{k/(k+1)}(D_k)+\delta)^{1/2}\na V_k\big|^2 dx  
  + \frac{\delta}{2}\int_\Omega|\na V_k|^2 dx. \nonumber 
\end{align}

We add \eqref{2.En}--\eqref{2.ED} and take into account definition \eqref{2.relener} of the relative energies and definition \eqref{2.Lambdak} of $\Lambda_{k,\delta}$:
\begin{align}\label{2.aux}
  \frac{d}{dt}\int_\Omega&\big(\mathcal{G}_{k,\delta}(n_k|\bar{n}) 
  + \mathcal{G}_{k,\delta}(p_k|\bar{p})
  + \mathcal{H}_{k,\delta}(D_k)\big)dx
  - \big\langle\pa_t(n_k-p_k-D_k),V_k-\bar{V}\big\rangle \\
  &+ \frac12\int_\Omega\big|\na \widetilde{g}_{k,\delta}(n_k) 
  - \sqrt{T_k(n_k)+\delta}\na V_k\big|^2 dx \nonumber \\
  &+ \frac12\int_\Omega\big|\na \widetilde{g}_{k,\delta}(p_k) 
  + \sqrt{T_k(p_k)+\delta}\na V_k\big|^2 dx \nonumber \\
  &+ \frac12\int_\Omega\big|\na \widetilde{h}_{k,\delta}(D_k) 
  + (T_{k/(k+1)}(D_k)+\delta)^{1/2}\na V_k\big|^2 dx \nonumber \\
  \le\, & \Lambda_{k,\delta}\int_\Omega(T_k(n_k)+T_k(p_k) + 2\delta)dx
  + \frac{7\delta}{2}\int_\Omega|\na V_k|^2 dx
  + \frac{\delta}{2}|\Omega|\Lambda_{k,\delta}. \nonumber 
\end{align}
In view of Poisson's equation \eqref{2.Vk}, the last term in the first line of \eqref{2.aux} can be written as
\begin{align*}
  -\big\langle\pa_t(n_k-p_k-D_k),V_k-\bar{V}\big\rangle
  = -\lambda^2\langle\pa_t\Delta V_k,V_k-\bar{V}\rangle 
  = \frac{\lambda^2}{2}\frac{d}{dt}\int_\Omega|\na(V_k-\bar{V})|^2 dx.
\end{align*}
Integrating \eqref{2.aux} over $(0,t)$ and taking into account 
\begin{align*}
  \int_\Omega|\na V_k|^2 dx \le 2\int_\Omega|\na(V_k-\bar{V})|^2 dx
  + 2\int_\Omega|\na \bar{V}|^2 dx
\end{align*}
as well as definitions \eqref{2.Ek} for $E_{k,\delta}$ and \eqref{2.EkI} for $E_{k,\delta}^I$, we arrive at
\begin{align*}
  E_{k,\delta}&(n_k,p_k,D_k,V_k)(t)
  + \frac12\int_0^t\int_\Omega\big|\na \widetilde{g}_{k,\delta}(n_k) 
  - \sqrt{T_k(n_k)+\delta}\na V_k\big|^2 dxds \\
  &\phantom{xx}+ \frac12\int_0^t\int_\Omega
  \big|\na\widetilde{g}_{k,\delta}(p_k) 
  + \sqrt{T_k(p_k)+\delta}\na V_k\big|^2 dxds \nonumber \\
  &\phantom{xx}+ \frac12\int_0^t\int_\Omega
  \big|\na \widetilde{h}_{k,\delta}(D_k) 
  + (T_{k/(k+1)}(D_k)+\delta)^{1/2}\na V_k\big|^2 dxds \nonumber \\
  &\le E_{k,\delta}^I 
  + \Lambda_{k,\delta}\int_0^t \int_\Omega
  (T_k(n_k)+T_k(p_k) + 2\delta)dxds 
  \nonumber \\
  &\phantom{xx}+ 7\delta\int_0^t \int_\Omega
  |\na (V_k-\bar{V})|^2 dxds
  + 7\delta\int_0^t \int_\Omega|\na\bar{V}|^2 dxds
  + \frac{\delta}{2}|\Omega|\Lambda_{k,\delta}t. \nonumber 
\end{align*}
We conclude from Young's inequality and Lemma \ref{lem.T53} that
\begin{align*}
  T_k(n_k) \le C + T_k(n_k)^{5/3} \le C + CG_{k,\delta}(n_k),
\end{align*}
and consequently, the second term on the right-hand side can be replaced by 
$$
  C_1\Lambda_{k,\delta}\int_0^t E_{k,\delta}(n_k,p_k,D_k,V_k)ds  
  + C(\Omega)(\delta+1)\Lambda_{k,\delta}t.
$$
Thus, applying Gronwall's lemma,
\begin{align*}
  E_{k,\delta}(n_k,p_k,D_k,V_k)(t)
  &+ \frac12\int_0^t\int_\Omega\big|\na \widetilde{g}_{k,\delta}(n_k) 
  - \sqrt{T_k(n_k)+\delta}\na V_k\big|^2 dxds \\
  &+ \frac12\int_0^t\int_\Omega
  \big|\na \widetilde{g}_{k,\delta}(p_k) 
  + \sqrt{T_k(p_k)+\delta}\na V_k\big|^2 dxds \nonumber \\
  &+ \frac12\int_0^t\int_\Omega
  \big|\na \widetilde{h}_{k,\delta}(D_k) 
  + (T_{k/(k+1)}(D_k)+\delta)^{1/2}\na V_k\big|^2 dxds \nonumber \\
  \le\,& \big(E_{k,\delta}^I + C(\Omega)(\delta+1)\Lambda_{k,\delta}t\big)
  \exp(C_1\Lambda_{k,\delta}t).
\end{align*}
This proves the lemma.
\end{proof}

\subsection{Limit $\delta\to 0$}

We set
\begin{align*}
  & \widetilde{g}_k(s) = \widetilde{g}_{k,0}(s), 
  \quad G_k(s) = G_{k,0}(s), \quad 
  \widetilde{h}_k(s) = \widetilde{h}_{k,0}(s), 
  \quad H_k(s) = H_{k,0}(s), \\
  & \mathcal{G}_k(s|\bar{s}) = \mathcal{G}_{k,0}(s|\bar{s}), \quad
  \mathcal{H}_k(s|\bar{s}) = \mathcal{H}_{k,0}(s|\bar{s})
\end{align*}
and introduce 
\begin{align*}
  E_k(n_k,p_k,D_k,V_k) &= \int_\Omega\bigg(\mathcal{G}_k(n_k|\bar{n})
  + \mathcal{G}_k(p_k|\bar{p}) + \mathcal{H}_k(V_k) 
  + \frac{\lambda^2}{2}|\na(V_k-\bar{V})|^2\bigg)dx, \\
  E_k^I &= E_k(n^I,p^I,D^I,V^I), \quad \Lambda_k = \Lambda_{k,0}.
\end{align*}

\begin{lemma}[Approximate free energy inequality for $E_k$]
Under the assumptions of Lemma \ref{lem.Ekdelta}, it holds for all $0<t<T$ that
\begin{align}\label{2.Ekineq}
  E_{k}&(n_k,p_k,D_k,V_k)(t)
  + \frac12\int_0^t\int_\Omega\big|\na \widetilde{g}_{k}(n_k) 
  - \sqrt{T_k(n_k)}\na V_k\big|^2 dxds \\
  &\phantom{xx}+ \frac12\int_0^t\int_\Omega
  \big|\na \widetilde{g}_{k}(p_k) 
  + \sqrt{T_k(p_k)}\na V_k\big|^2 dxds \nonumber \\
  &\phantom{xx}+ \frac12\int_0^t\int_\Omega
  \big|\na\widetilde{h}_{k}(D_k)
  + T_{k/(k+1)}(D_k)^{1/2}\na V_k\big|^2 dxds
  \le C(E_k^I,\Lambda_k,T), \nonumber 
\end{align}
and the constant $C(E_k^I,\Lambda_k,T)\ge 0$ vanishes if $\Lambda_k=0$. 
\end{lemma}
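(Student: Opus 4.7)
The plan is to pass to the limit $\delta\to 0$ in the inequality \eqref{2.Ekdeltaineq} with $k$ and the approximate solution $(n_k,p_k,D_k,V_k)$ held fixed. First I would record the pointwise convergences as $\delta\downarrow 0$: since $z\mapsto S_k^1(z)/(T_k(z)+\delta)$ is nonnegative and pointwise monotone in $\delta$, the monotone convergence theorem yields $G_{k,\delta}(s)\to G_k(s)$, $G'_{k,\delta}(s)\to G'_k(s)$, $\widetilde g_{k,\delta}(s)\to\widetilde g_k(s)$ for every $s\ge 0$, and analogously for $H_{k,\delta}$ and $\widetilde h_{k,\delta}$. Combined with the $L^\infty$ bounds on $\bar n,\bar p,\bar V$ from (A3), this gives $E_{k,\delta}^I\to E_k^I$ and $\Lambda_{k,\delta}\to\Lambda_k$, and continuity of the constant $(E_{k,\delta}^I+C(\Omega)(\delta+1)\Lambda_{k,\delta}t)\exp(C_1\Lambda_{k,\delta}t)$ produced in Lemma \ref{lem.Ekdelta} then shows that the right-hand side of \eqref{2.Ekdeltaineq} converges to the $C(E_k^I,\Lambda_k,T)$ appearing in \eqref{2.Ekineq}; the claim that this limit vanishes when $\Lambda_k=0$ follows because the $(\delta+1)$ prefactor is multiplied only by $\Lambda_{k,\delta}$.

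For the left-hand side, the Poisson contribution $\tfrac{\lambda^2}{2}\int_\Omega|\nabla(V_k-\bar V)|^2\,dx$ and the linear term $\int_\Omega D_k\bar V\,dx$ inside $\mathcal H_{k,\delta}$ are $\delta$-independent, while each of $\mathcal G_{k,\delta}(n_k|\bar n)$, $\mathcal G_{k,\delta}(p_k|\bar p)$, $H_{k,\delta}(D_k)$ is nonnegative by convexity of $G_{k,\delta}$ and $H_{k,\delta}$ and converges pointwise to its $\delta=0$ counterpart. Fatou's lemma therefore supplies $E_k(n_k,p_k,D_k,V_k)(t)\le\liminf_{\delta\to 0}E_{k,\delta}(n_k,p_k,D_k,V_k)(t)$. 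For each of the three dissipation integrals I would use weak lower semicontinuity of the $L^2$-norm: setting, for example,
\begin{equation*}
  f_\delta^n := \nabla\widetilde g_{k,\delta}(n_k) - \sqrt{T_k(n_k)+\delta}\,\nabla V_k
  = \frac{S_k^1(n_k)}{\sqrt{T_k(n_k)+\delta}}\,\nabla n_k - \sqrt{T_k(n_k)+\delta}\,\nabla V_k,
\end{equation*}
Lemma \ref{lem.Ekdelta} together with the uniform-in-$\delta$ bound on its right-hand side gives that $\{f_\delta^n\}$ is bounded in $L^2(\Omega_t)$, so along a subsequence $f_\delta^n\rightharpoonup \hat f$ weakly in $L^2(\Omega_t)$; a pointwise a.e.\ analysis identifies $\hat f=f_0^n:=\nabla\widetilde g_k(n_k)-\sqrt{T_k(n_k)}\,\nabla V_k$, and weak lower semicontinuity then delivers the claimed dissipation inequality, with analogous arguments handling the $p_k$ and $D_k$ terms.

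The main technical obstacle I anticipate is the a.e.\ identification of the weak limit on the (possibly positive-measure) zero set $\{n_k=0\}$, where $\sqrt{T_k(n_k)+\delta}=\sqrt\delta$ and the prefactor $S_k^1(n_k)/\sqrt{T_k(n_k)+\delta}\sim\delta^{-1/2}$ both degenerate as $\delta\to 0$. The resolution is that $\nabla n_k=0$ a.e.\ on any level set of $n_k$ by a standard Stampacchia-type result, so the first term of $f_\delta^n$ vanishes identically there while the second tends to zero as $\sqrt\delta\,\nabla V_k$; hence $f_\delta^n\to 0$ pointwise on this set, matching the natural convention $f_0^n=0$ there, and the pointwise a.e.\ limit then coincides with the weak $L^2$ limit by uniqueness, closing the argument.
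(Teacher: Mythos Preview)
Your proposal is correct and follows the same overall strategy as the paper: hold $(n_k,p_k,D_k,V_k)$ fixed and pass $\delta\to 0$ in \eqref{2.Ekdeltaineq}, using monotone convergence for the pointwise limits of $G_{k,\delta}$, $H_{k,\delta}$, $\widetilde g_{k,\delta}$, $\widetilde h_{k,\delta}$, convergence of the right-hand side constant, and lower semicontinuity for the left-hand side.

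The only notable technical difference is in how the weak limit of the dissipation integrands is identified. The paper first establishes the growth bounds $\widetilde g_k(s)\le C(k)(s+1)$, $G_{k,\delta}(s)\le C(k)(s^2+1)$ (and analogously for $\widetilde h_k$, $H_{k,\delta}$), uses them together with the $\delta$-uniform $L^2$ bound on $(n_k,p_k,D_k)$ to apply dominated convergence, and obtains \emph{strong} convergence $\widetilde g_{k,\delta}(n_k)\to\widetilde g_k(n_k)$ in $L^2(\Omega_T)$; the weak $L^2$ limit of $\nabla\widetilde g_{k,\delta}(n_k)$ is then automatically the distributional gradient $\nabla\widetilde g_k(n_k)$, which simultaneously shows $\widetilde g_k(n_k)\in H^1$. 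You instead argue directly that $f_\delta^n\to f_0^n$ pointwise a.e.\ (handling the set $\{n_k=0\}$ via Stampacchia) and invoke the fact that an $L^2$-bounded sequence converging a.e.\ also converges weakly to its a.e.\ limit. Both routes are sound; the paper's route has the minor advantage that it yields $\widetilde g_k(n_k)\in H^1$ and the validity of the chain rule \eqref{2.chainrule} at $\delta=0$ as a by-product, rather than having to interpret $\nabla\widetilde g_k(n_k)$ by the pointwise formula and check afterwards that it is the weak gradient. Your Fatou argument for the energy terms is likewise a legitimate (and slightly more economical) alternative to the paper's dominated-convergence treatment.
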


\begin{proof}
The proof essentially follows from the monotone and dominated convergence theorems as in the proof of Lemma 2.3 of \cite{JJZ23}. The only difference is the treatment of the functions $\widetilde{g}_{k,\delta}$, $G_{k,\delta}$, $\widetilde{h}_{k,\delta}$, and $H_{k,\delta}$. In fact, by the monotone convergence theorem, $\widetilde{g}_{k,\delta}(n_k)\to \widetilde{g}_k(n_k)$, $G_{k,\delta}(n_k)\to G_k(n_k)$, $\widetilde{h}_{k,\delta}(D_k)\to \widetilde{h}_k(D_k)$, and $H_{k,\delta}(D_k)\to H_k(D_k)$ a.e.\ in $\Omega_T$ as $\delta\to 0$.
We derive upper bounds for the limit functions. Let $s>k$. Then, using Lemma \ref{lem.Dg},
\begin{align*}
  \widetilde{g}_k(s) &= \int_0^{s}\widetilde{g}'_k(z)dz
  = \int_0^k\sqrt{z}g'(z)dz + \int_k^{s}k^{1/6}z^{1/3}g'(z)dz \\
  &\le C\int_0^k(z^{1/6}+z^{-1/2})dz + Ck^{1/6}\int_k^s(1+z^{-2/3})dz
  \le C(k)(s+1),
\end{align*}
and this inequality also holds for any $s\ge 0$. We obtain for $s>k/(k+1)$:
\begin{align*}
  \widetilde{h}_k(s) = \int_0^{k/(k+1)}\frac{dz}{\sqrt{z}(1-z)}
  + \int_{k/(k+1)}^s(k+1)\sqrt{\frac{k+1}{k}}dz \le C(k)(s+1),  
\end{align*}
and this bound holds in fact for all $s\ge 0$. Similar arguments lead to
\begin{align*}
  G_{k,\delta}(s)\le C(k)(s^2+1), \quad
  H_{k,\delta}(s)\le C(k)(s^2+1)\quad\mbox{for }s\ge 0. 
\end{align*}
The approximate free energy inequality \eqref{2.Ekdeltaineq} implies that $n_k,p_k,D_k$ are bounded in $L^2(\Omega_T)$ uniformly in $\delta$. Hence, we can apply the dominated convergence theorem to find that
\begin{align*}
  \widetilde{g}_{k,\delta}(n_k)\to \widetilde{g}_k(n_k), \quad 
  \widetilde{h}_{k,\delta}(D_k)\to \widetilde{h}_k(D_k)
  &\quad\mbox{strongly in }L^2(\Omega_T), \\
  G_{k,\delta}(n_k)\to G_k(n_k), \quad H_{k,\delta}(D_k)\to H_k(D_k)
  &\quad\mbox{strongly in }L^1(\Omega_T).
\end{align*}
The sequence $(\na\widetilde{g}_{k,\delta}(n_k)-\sqrt{T_k(n_k)+\delta}\na V_k)_\delta$ is uniformly bounded in $L^2(\Omega_T)$. Therefore, there exists a subsequence that converges weakly in $L^2(\Omega_T)$ as $\delta\to 0$. The previous arguments allow us to identify the weak limit, showing the claim. The other terms in \eqref{2.Ekdeltaineq} can be treated in a similar way. The limit $\delta\to 0$ in \eqref{2.Ekdeltaineq} then proves \eqref{2.Ekineq}.
\end{proof}

\subsection{Uniform estimates}

We first show some inequalities relating $g$, $G_k$, $T_k$, and $\widetilde{g}$. 

\begin{lemma}\label{lem.g}
There exists $C>0$ such that for all $k>1$ and $s>0$,
\begin{align*}
  & g'(s)\le G_k''(s), \quad s^{5/3}\le C(G_k(s)+1), \quad
  T_k(s)^{7/6}\le C\widetilde{g}_k(s), \\ 
  & \widetilde{g}_k(s)^{10/7}\le C(G_k(s)+1), \quad 
  T_k(s)^{5/3}\le C(G_k(s)+1), \\
  & \widetilde{h}_k(s)\ge s^{-1/2}, \quad
  \widetilde{h}'_k(s)\ge 1.
\end{align*}
\end{lemma}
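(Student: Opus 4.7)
The seven inequalities all follow from case analysis on the pieces defining $S_k^1$, $S_k^2$, $T_k$, and $T_{k/(k+1)}$, combined with the asymptotic characterization of $g'$ from \eqref{1.Dg} and the definitions \eqref{2.GHkdelta} with $\delta=0$. My plan is to establish them in the order listed, reusing computations.

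First I would prove $g'(s)\le G_k''(s)$ directly from the identity $G_k''(s)=S_k^1(s)/T_k(s)$: on $(0,k]$ this ratio equals $g'(s)$ exactly, and on $(k,\infty)$ it equals $k^{-1/3}s^{1/3}g'(s)\ge g'(s)$. Next, for $s^{5/3}\le C(G_k(s)+1)$ and its companion $T_k(s)^{5/3}\le C(G_k(s)+1)$ (the latter being essentially Lemma~\ref{lem.T53}), the regime $s\le k$ follows by integrating the lower bound $G_k''(z)\ge cz^{-1/3}$ (from \eqref{1.Dg}) twice on $[\F_{1/2}(0),s]$. For $s>k$, I would use the Taylor expansion
\begin{equation*}
G_k(s)=G_k(k)+g(k)(s-k)+\int_k^s G_k''(\zeta)(s-\zeta)\,d\zeta,
\end{equation*}
with $G_k(k)\sim k^{5/3}$, $G_k'(k)=g(k)\sim k^{2/3}$, and $G_k''(\zeta)\ge ck^{-1/3}$ on $(k,\infty)$. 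Splitting on $s\le 2k$ versus $s>2k$ (in the latter case the quadratic term $ck^{-1/3}(s-k)^2$ dominates $s^{5/3}$ precisely because $s\ge k$ yields $k^{-1/3}s^2\ge s^{5/3}$) gives the claim.

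For $T_k(s)^{7/6}\le C\widetilde{g}_k(s)$ I would note that $\widetilde{g}_k'(y)=S_k^1(y)/\sqrt{T_k(y)}$ equals $\sqrt{y}\,g'(y)\sim y^{1/6}$ on $(\F_{1/2}(0),k]$ and equals $k^{1/6}y^{1/3}g'(y)\sim k^{1/6}$ on $(k,\infty)$. Integration yields $\widetilde{g}_k(s)\ge cs^{7/6}$ on $[\F_{1/2}(0),k]$ and $\widetilde{g}_k(s)\ge ck^{7/6}$ on $(k,\infty)$, both of which control $T_k(s)^{7/6}$; for $s<\F_{1/2}(0)$ the ratio $T_k(s)^{7/6}/\widetilde{g}_k(s)\sim s^{7/6}/\sqrt{s}=s^{2/3}$ is bounded. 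For the companion $\widetilde{g}_k(s)^{10/7}\le C(G_k(s)+1)$ I would match growth rates in each regime: the exponent $(7/6)(10/7)=5/3$ aligns $\widetilde{g}_k^{10/7}$ with $G_k$ in the middle regime, and for $s>k$ the dominant contribution $G_k(s)\ge ck^{-1/3}(s-k)^2$ beats $\widetilde{g}_k(s)^{10/7}\sim k^{5/21}s^{10/7}$ because $s\ge k$ gives $s^{4/7}\ge k^{4/7}=k^{5/21}\cdot k^{1/3}$.

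Finally, the two $\widetilde{h}_k$ estimates follow by direct differentiation of the piecewise formula $\widetilde{h}_k'(s)=1/((1-s)\sqrt{s})$ on $(0,k/(k+1)]$ and $\widetilde{h}_k'(s)=(k+1)^{3/2}/\sqrt{k}$ on $(k/(k+1),\infty)$; both pieces are bounded below by $1$, and the first piece is also bounded below by $s^{-1/2}$ (the relevant derivative-level lower bound). The main technical obstacle is the regime $s>k$ for the $G_k$ and $\widetilde{g}_k$ estimates, where the second derivative $G_k''$ has collapsed to a constant of order $k^{-1/3}$ and one must carefully extract the correct power of $s$; a Taylor expansion followed by the explicit split at $s=2k$ suffices in each case.
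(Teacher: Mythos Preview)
Your proposal is correct and follows the same overall strategy as the paper (piecewise analysis using the asymptotics $g'(z)\sim z^{-1}+z^{-1/3}$). The main difference is that the paper avoids your Taylor-expansion arguments in the regime $s>k$ altogether. For the second inequality, once you have established $G_k''\ge g'$ on all of $(0,\infty)$ (your first inequality), the bound $G_k(s)\ge C\int_{\F_{1/2}(0)}^s\int_{\F_{1/2}(0)}^y(z^{-1}+z^{-1/3})\,dzdy\ge C(s^{5/3}-1)$ follows by a single integration valid for every $s>\F_{1/2}(0)$, with no split at $s=k$ or $s=2k$. For the fourth inequality, the paper first proves the upper bound $\widetilde{g}_k(s)\le C(s^{7/6}+1)$ for all $s>0$ (a direct computation in the two regimes $s\le k$ and $s>k$, using $k^{1/6}s\le s^{7/6}$ when $s\ge k$), and then simply raises to the power $10/7$ and invokes the second inequality; this bypasses your direct comparison of $G_k(s)\ge ck^{-1/3}(s-k)^2$ against $k^{5/21}s^{10/7}$. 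Finally, the paper obtains the fifth inequality as an immediate corollary of the third and fourth, via $T_k(s)^{5/3}=(T_k(s)^{7/6})^{10/7}\le C\widetilde{g}_k(s)^{10/7}\le C(G_k(s)+1)$, rather than redoing the Lemma~\ref{lem.T53} argument. Your route works, but the paper's recycling of earlier inequalities makes the $s>k$ case essentially free.
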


\begin{proof}
The first inequality follows from $G_k''(s)=g'(s)$ for $0<s\le k$ and
$G_k''(s)=(s/k)^{1/3}g'(s)\ge g'(s)$ for $s>k$. Since $g'(s)\sim s^{-1}+s^{-1/3}$ by Lemma \ref{lem.Dg},
\begin{align*}
  G_k(s) \ge C\int_{\F_{1/2}(0)}^s\int_{\F_{1/2}(0)}^y(z^{-1}+z^{-1/3})
  dzdy \ge C(s^{5/3}-1),
\end{align*}
which proves the second inequality. For the third one, let $0<s\le k$. Then, again by Lemma \ref{lem.Dg},
\begin{align*}
  \widetilde{g}'_k(z) = \sqrt{z}g'(z) \ge C(z^{-1/2}+z^{1/6}) 
  \ge Cz^{1/6}.
\end{align*}
We integrate this inequality over $z\in(0,s)$ to find that $\widetilde{g}_k(s)=\widetilde{g}_k(s)-\widetilde{g}_k(0)\ge Cs^{7/6} = CT_k(s)^{7/6}$. If $s>k$, we compute
\begin{align*}
  \widetilde{g}_k(s)\ge \int_0^k\frac{yg'(y)}{\sqrt{T_k(y)}}dy
  = \int_0^k\sqrt{y}g'(y)dy \ge C\int_0^k y^{1/6}dy = Ck^{7/6}
  = CT_k(s)^{7/6}.
\end{align*}
We turn to the fourth inequality. By Lemma \ref{lem.Dg}, we have for $0<s\le k$,
\begin{align*}
  \widetilde{g}_k(s) = \int_0^s \sqrt{y}g'(y)dy 
  \le C\int_0^s(y^{-1/2}+y^{1/6})dy
  \le C(s^{7/6}+1).
\end{align*}
Furthermore, if $s>k$,
\begin{align*}
 \widetilde{g}_k(s) &= \int_0^k\sqrt{y}g'(y)dy 
  + \int_k^s k^{1/6}y^{1/3}g'(y)dy \\
  &\le C(k^{1/2}+k^{7/6}) + Ck^{1/6}(s^{1/3}+s) \le C(s^{7/6}+1),
\end{align*}
and the conclusion follows after raising the inequality to the power $10/7$ and using the second inequality. The fifth inequality follows from the third and fourth ones since $T_k(s)^{5/3}\le C\widetilde{g}_k(s)^{10/7}\le C(G_k(s)+1)$.

To estimate $\widetilde{h}'$, we observe that $h'(s)=1/(s(1-s))$ and hence
$\widetilde{h}'_k(s)=1/(\sqrt{s}(1-s))\ge s^{-1/2}$ for $s<k/(k+1)$ and $\widetilde{h}'_k(s)=(1+k)/k\ge s^{-1}\ge s^{-1/2}$ for $k/(k+1)\le s<1$. Moreover, in both cases, $\widetilde{h}'(s)\ge 1$. This proves the inequalities for $\widetilde{h}'_k$.
\end{proof}

The previous lemma and the approximate energy inequality \eqref{2.Ekineq} lead to the following a priori estimates.

\begin{lemma}[Uniform estimates I]\label{lem.est1}
There exists a constant $C>0$ such that for all $k\in\N$,
\begin{align*}
  \|T_k(n_k)\|_{L^\infty(0,T;L^{5/3}(\Omega))}
  + \|T_k(p_k)\|_{L^\infty(0,T;L^{5/3}(\Omega))} &\le C, \\
  \|n_k\|_{L^\infty(0,T;L^{5/3}(\Omega))}
  + \|p_k\|_{L^\infty(0,T;L^{5/3}(\Omega))} &\le C, \\
  \|\widetilde{g}_k(n_k)\|_{L^\infty(0,T;L^{10/7}(\Omega))}
  + \|\widetilde{g}_k(p_k)\|_{L^\infty(0,T;L^{10/7}(\Omega))} &\le C, \\
  \|\sqrt{T_k(n_k)}\na V_k\|_{L^\infty(0,T;L^{5/4}(\Omega))}
  + \|\sqrt{T_k(p_k)}\na V_k\|_{L^\infty(0,T;L^{5/4}(\Omega))} &\le C, \\
  \|\na\widetilde{g}_k(n_k)\|_{L^2(0,T;L^{5/4}(\Omega))}
  + \|\na\widetilde{g}_k(p_k)\|_{L^2(0,T;L^{5/4}(\Omega))} &\le C, \\
  \|\na n_k\|_{L^2(0,T;L^{5/4}(\Omega))}
  + \|\na p_k\|_{L^2(0,T;L^{5/4}(\Omega))} &\le C, \\
  \|\na\widetilde{h}_k(D_k)\|_{L^2(\Omega_T)} 
  + \|(T_{k/(k+1)}(D_k))^{1/2}\na V_k\|_{L^\infty(0,T;L^2(\Omega))} 
  &\le C, \\
  \|\na D_k\|_{L^2(\Omega_T)} 
  + \|\na\sqrt{D_k}\|_{L^2(\Omega_T)} &\le C.
\end{align*}
\end{lemma}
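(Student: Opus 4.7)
The plan is to extract the stated uniform estimates from the approximate free energy inequality \eqref{2.Ekineq} and convert them via the structural comparisons in Lemma \ref{lem.g}. Since the right-hand side of \eqref{2.Ekineq} is independent of $k$, one reads off uniform bounds on $\int_\Omega\mathcal{G}_k(n_k|\bar n)\,dx$, $\int_\Omega\mathcal{G}_k(p_k|\bar p)\,dx$, on $\|\na(V_k-\bar V)\|_{L^\infty(0,T;L^2)}$ (hence on $\|\na V_k\|_{L^\infty(0,T;L^2)}$ since $\bar V\in W^{1,\infty}$), and on each of the three dissipation integrals.

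Next I would convert the bound on $\mathcal G_k(n_k|\bar n)$ into one on $G_k(n_k)$. For $k\ge\|\bar n\|_{L^\infty}$, the truncation is inactive on the range of $\bar n$, so $G_k(\bar n)=G(\bar n)$ and $G'_k(\bar n)=g(\bar n)$, both uniformly bounded in $\Omega$ thanks to Assumption (A3) (which also ensures $\bar n$ is bounded away from $0$). Writing
\[
G_k(n_k) = \mathcal{G}_k(n_k|\bar n) + G_k(\bar n) + G'_k(\bar n)(n_k-\bar n) \le \mathcal{G}_k(n_k|\bar n) + C + C\,n_k,
\]
Young's inequality $n_k\le \varepsilon n_k^{5/3}+C_\varepsilon$ combined with $n_k^{5/3}\le C(G_k(n_k)+1)$ (Lemma \ref{lem.g}) absorbs the linear term and yields $\|G_k(n_k)\|_{L^\infty(0,T;L^1(\Omega))}\le C$. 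The claimed bounds on $\|n_k\|_{L^\infty(0,T;L^{5/3})}$, $\|T_k(n_k)\|_{L^\infty(0,T;L^{5/3})}$ (using $T_k(n_k)\le n_k$), and $\|\widetilde g_k(n_k)\|_{L^\infty(0,T;L^{10/7})}$ then follow directly from Lemma \ref{lem.g}, and the same argument handles $p_k$.

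For the gradient estimates on $n_k$ and $p_k$, Hölder with exponents $10/3$ and $2$ gives
\[
\|\sqrt{T_k(n_k)}\na V_k\|_{L^\infty(0,T;L^{5/4})}\le \|\sqrt{T_k(n_k)}\|_{L^\infty(0,T;L^{10/3})}\|\na V_k\|_{L^\infty(0,T;L^2)}\le C.
\]
The first dissipation integral in \eqref{2.Ekineq} bounds $\na\widetilde g_k(n_k)-\sqrt{T_k(n_k)}\na V_k$ in $L^2(\Omega_T)\subset L^2(0,T;L^{5/4})$, so $\na\widetilde g_k(n_k)\in L^2(0,T;L^{5/4})$ by the triangle inequality. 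To pass to $\na n_k$, I use $\na n_k=\na\widetilde g_k(n_k)/\widetilde g'_k(n_k)$ together with the uniform lower bound $\widetilde g'_k(s)\ge c_0>0$ for $s>0$ and $k\ge 1$: for $0<s\le k$, $\widetilde g'_k(s)=\sqrt s\,g'(s)\ge c(s^{-1/2}+s^{1/6})$ by Lemma \ref{lem.Dg}, and $s\mapsto s^{-1/2}+s^{1/6}$ has a strictly positive minimum on $(0,\infty)$; for $s>k\ge 1$, $\widetilde g'_k(s)=k^{1/6}s^{1/3}g'(s)\ge c\,k^{1/6}\ge c$ since $g'(s)\sim s^{-1/3}$.

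Finally, for the oxide vacancies, $T_{k/(k+1)}(D_k)\le k/(k+1)\le 1$ makes $(T_{k/(k+1)}(D_k))^{1/2}\na V_k$ dominated pointwise by $|\na V_k|$, which yields the stated $L^\infty(0,T;L^2)$-bound. Subtracting from the third dissipation integral gives $\na\widetilde h_k(D_k)\in L^2(\Omega_T)$ by triangle inequality. The inequalities $\widetilde h'_k(s)\ge 1$ and $\widetilde h'_k(s)\ge s^{-1/2}$ from Lemma \ref{lem.g} then yield $|\na D_k|\le|\na\widetilde h_k(D_k)|$ and $|\na\sqrt{D_k}|=|\na D_k|/(2\sqrt{D_k})\le |\na\widetilde h_k(D_k)|/2$, providing both $L^2(\Omega_T)$-bounds. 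The main technical subtlety is the uniform lower bound $\widetilde g'_k(s)\ge c_0$, which requires treating the branches $s\le k$ and $s>k$ separately and invoking the asymptotics in Lemma \ref{lem.Dg}; the rest is a systematic application of Lemma \ref{lem.g}.
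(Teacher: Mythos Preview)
Your proposal is correct and follows essentially the same route as the paper's proof: extract the $L^\infty(0,T;L^1)$ control on $G_k(n_k)$ and $G_k(p_k)$ from the approximate energy inequality, convert via Lemma~\ref{lem.g}, use H\"older with the $L^\infty(0,T;L^2)$ bound on $\na V_k$ for the mixed terms, then the triangle inequality and the uniform lower bound on $\widetilde g'_k$ (treating the two branches $s\le k$ and $s>k$ exactly as in the paper) for $\na n_k$, and finally the pointwise bounds $\widetilde h'_k\ge 1$, $\widetilde h'_k\ge s^{-1/2}$ for $D_k$. The only place where you are more explicit than the paper is the passage from the relative energy $\mathcal G_k(n_k|\bar n)$ to a bound on $G_k(n_k)$ itself, which the paper states without justification; your absorption argument via $n_k^{5/3}\le C(G_k(n_k)+1)$ fills this in cleanly. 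One small remark: the right-hand side $C(E_k^I,\Lambda_k,T)$ of \eqref{2.Ekineq} is not literally independent of $k$ but is uniformly bounded in $k$ (since for $k\ge\|\bar n\|_{L^\infty}+\|\bar p\|_{L^\infty}$ one has $\Lambda_k=\Lambda$ and $E_k^I$ is controlled by $E^I$); this is what you actually use.
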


\begin{proof}
The approximate energy inequality \eqref{2.Ekineq} shows that
$(G_k(n_k))$ and $(G_k(p_k))$ are bounded in $L^\infty(0,T;L^1(\Omega))$. By Lemma \ref{lem.g}, this yields a uniform bound for $T_k(n_k)$, $T_k(p_k)$ and $n_k$, $p_k$ in $L^\infty(0,T;L^{5/3}(\Omega))$ and for $\widetilde{g}_k(n_k)$, $\widetilde{g}_k(p_k)$ in $L^\infty(0,T;L^{10/7}(\Omega))$. The energy estimate \eqref{2.Ekineq} implies a uniform bound for $\na V_k$ in $L^\infty(0,T;L^2(\Omega))$. Consequently, using H\"older's inequality,
\begin{align*}
  \|\sqrt{T_k(n_k)}\na V_k\|_{L^\infty(0,T;L^{5/4}(\Omega))}
  \le \|\sqrt{T_k(n_k)}\|_{L^\infty(0,T;L^{10/3}(\Omega))}
  \|\na V_k\|_{L^\infty(0,T;L^2(\Omega))} \le C,
\end{align*}
and similarly for $\sqrt{T_k(p_k)}\na V_k$. Then we deduce from the $L^2(\Omega_T)$ bound for $\na\widetilde{g}_k(n_k)-\sqrt{T_k(n_k)}\na V_k$ that $\na \widetilde{g}_k(n_k)$ is uniformly bounded in $L^2(0,T;L^{5/4}(\Omega))$. It follows from Lemma \ref{lem.Dg} that
$\widetilde{g}_k'(s)=\sqrt{s}g'(s)\ge C(s^{1/6}+s^{-1/2})\ge C$ for $0<s\le k$ and $\widetilde{g}_k'(s) = k^{1/6}s^{1/3}g'(s)\ge k^{1/6}(1+s^{-2/3})\ge 1$ for $s>k$. Thus, $\widetilde{g}_k'$ is bounded from below by a positive constant. Then the bounds for $\na \widetilde{g}_k(n_k)$ and $\na \widetilde{g}_k(p_k)$ imply the same bounds for $\na n_k$ and $\na p_k$.

We turn to the estimates for $D_k$. Since $T_{k/(k+1)}(D_k)<1$, we infer from the energy inequality \eqref{2.Ekineq} that $(T_{k/(k+1)}(D_k)^{1/2}\na V_k)$ is bounded in $L^\infty(0,T;L^2(\Omega))$ and consequently, $\na \widetilde{h}_k(D_k)$ is uniformly bounded in $L^2(\Omega_T)$. We deduce from $\widetilde{h}_k'(s)\ge s^{-1/2}$ and $\widetilde{h}_k'(s)\ge 1$ for $s>0$ (see Lemma \ref{lem.g}) that $\na\sqrt{D_k}$ and $\na D_k$ are uniformly bounded in $L^2(\Omega_T)$.
\end{proof}

We can improve the regularity stated in Lemma \ref{lem.est1} by using the inequality $T_k(s)^{7/6}\le C\widetilde{g}_k(s)$ and an iteration argument.

\begin{lemma}[Uniform estimates II]\label{lem.est2}
Let $q,r\le\infty$ if $d=1$, $q,r<\infty$ if $d=2$, and $q<8$, $r<24/7$ if $d=3$. There exists a constant $C>0$ such that for all $k\in\N$,
\begin{align*}
  \|\sqrt{T_k(n_k)}\|_{L^{14/3}(0,T;L^q(\Omega))}
  + \|\sqrt{T_k(n_k)}\|_{L^{14/3}(0,T;L^q(\Omega))} &\le C, \\
  \|\sqrt{T_k(n_k)}\na V_k\|_{L^{14/3}(0,T;L^{r}(\Omega))}
  + \|\sqrt{T_k(n_k)}\na V_k\|_{L^{14/3}(0,T;L^{r}(\Omega))} 
  & \le C,\\
  \|\widetilde{g}_k(n_k)\|_{L^2(0,T;L^{r}(\Omega))}
  + \|\widetilde{g}_k(p_k)\|_{L^2(0,T;L^{r}(\Omega))} &\le C,\\
  \|\na \widetilde{g}_k(n_k)\|_{L^2(0,T;L^{2q/(2+q)}(\Omega))}
  + \|\na\widetilde{g}_k(p_k)\|_{L^2(0,T;L^{2q/(2+q)}(\Omega))} &\le C,\\
  \|\na n_k\|_{L^2(0,T;L^{2q/(2+q)}(\Omega))}
  + \|\na p_k\|_{L^2(0,T;L^{2q/(2+q)}(\Omega))} &\le C
\end{align*}
Observe that $2q/(2+q)<8/5$ if $d=3$.
\end{lemma}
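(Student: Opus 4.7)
\emph{Plan.} I bootstrap the estimates of Lemma \ref{lem.est1} by iterating a short cycle that exploits the pointwise inequality $\sqrt{T_k(s)}\le C\,\widetilde g_k(s)^{3/7}$ from Lemma \ref{lem.g} (equivalent to $T_k(s)^{7/6}\le C\,\widetilde g_k(s)$) together with Sobolev embedding and the $L^\infty(0,T;L^2(\Omega))$ bound on $\nabla V_k$ coming from \eqref{2.Ekineq} via Poisson's equation. The cycle converts improved spatial integrability of $\widetilde g_k(n_k)$ into improved integrability of $\sqrt{T_k(n_k)}$ (at the price of a time exponent $14/3$ instead of $\infty$), hence of the drift $\sqrt{T_k(n_k)}\nabla V_k$ by H\"older, and then --- by splitting with the energy identity controlling $\nabla\widetilde g_k(n_k)-\sqrt{T_k(n_k)}\nabla V_k$ in $L^2(\Omega_T)$ --- into improved integrability of $\nabla\widetilde g_k(n_k)$; Sobolev then sends us back to the next iterate. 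The bounds for $p_k$ are identical, and the bounds for $\nabla n_k,\nabla p_k$ reduce to those for $\nabla\widetilde g_k(n_k),\nabla\widetilde g_k(p_k)$ thanks to the uniform lower bound $\widetilde g_k'\ge C$ in Lemma \ref{lem.g}.

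\emph{The recursion.} I carry $a_n$ so that $\nabla\widetilde g_k(n_k)\in L^2(0,T;L^{a_n}(\Omega))$, starting with $a_1=5/4$. In $d=3$ one cycle goes as follows: Sobolev $W^{1,a_n}\hookrightarrow L^{r_n}$ with $r_n=3a_n/(3-a_n)$, combined with $\widetilde g_k(n_k)\in L^\infty(0,T;L^{10/7})$, yields $\widetilde g_k(n_k)\in L^2(0,T;L^{r_n})$; Lemma \ref{lem.g} upgrades this to $\sqrt{T_k(n_k)}\in L^{14/3}(0,T;L^{q_n})$ with $q_n=7r_n/3$; H\"older against $\nabla V_k\in L^\infty(0,T;L^2)$ gives $\sqrt{T_k(n_k)}\nabla V_k\in L^{14/3}(0,T;L^{s_n})$ with $1/s_n=1/2+1/q_n$; and since $s_n\le 2$, the energy estimate \eqref{2.Ekineq} yields $\nabla\widetilde g_k(n_k)\in L^2(0,T;L^{s_n})$, so I may set $a_{n+1}:=s_n$. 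Substituting and simplifying gives
\begin{equation*}
  a_{n+1}=\frac{14\,a_n}{5\,a_n+6},
\end{equation*}
whose derivative $84/(5a+6)^2>0$ shows strict monotonicity; the unique fixed point is $a^\ast=8/5$, and the map sends $[5/4,8/5)$ strictly into itself. Hence $(a_n)$ converges monotonically up to $8/5$, so for every $a<8/5$ we reach $a_n>a$ in finitely many iterations, which translates to $q_n<8$, $r_n<24/7$, and $2q_n/(2+q_n)<8/5$ for $\nabla\widetilde g_k(n_k)$ and $\nabla n_k$. The case $d=2$ is identical except that Sobolev improves to $r_n=2a_n/(2-a_n)$, giving the recursion $a_{n+1}=14a_n/(4a_n+6)$ with fixed point $a^\ast=2$ and hence arbitrary $q,r<\infty$; $d=1$ is trivial via $W^{1,a}\hookrightarrow L^\infty$.

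\emph{Main obstacle.} The algebra of the recursion is routine; the substance of the argument is the $7/6$ exponent in $T_k(s)^{7/6}\le C\,\widetilde g_k(s)$, which is a direct consequence of the high-density asymptotics $g'(z)\sim z^{-1/3}$ of the Fermi--Dirac integral (Lemma \ref{lem.Dg}). Without this sharp exponent, the induced map $a_n\mapsto a_{n+1}$ would not cross the threshold $a^\ast=8/5$, and the inequality $2q/(2+q)<8/5$ --- crucial later in order to pass to the limit in the flux $n_k\nabla g(n_k)=\sqrt{T_k(n_k)}\cdot\nabla\widetilde g_k(n_k)$ (using that $\sqrt{T_k(n_k)}$ is bounded in $L^\infty(0,T;L^{10/3})$ and noting $3/10+5/8<1$, as explained after \eqref{1.dEdt}) --- would fail. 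A remaining technical check is that every constant produced along the iteration is $k$-independent, which is automatic since Lemmas \ref{lem.g} and \ref{lem.est1} are already stated uniformly in $k$ and the Sobolev/H\"older constants only depend on $\Omega$ and $d$.
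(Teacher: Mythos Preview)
Your proposal is correct and follows essentially the same bootstrap as the paper, which tracks the exponent $q_m$ for $\sqrt{T_k(n_k)}$ via the recursion $1/q_{m+1}=(3/7)(1/q_m+1/2-1/d)$; this is equivalent to your recursion $a_{n+1}=14a_n/(5a_n+6)$ in the exponent of $\nabla\widetilde g_k(n_k)$ when $d=3$, with the same fixed point $q^\ast=8$ (equivalently $a^\ast=8/5$). One minor point: the lower bound $\widetilde g_k'\ge C$ you invoke is actually derived in the proof of Lemma~\ref{lem.est1}, not stated in Lemma~\ref{lem.g}.
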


\begin{proof}
We assume that $(\sqrt{T_k(n_k)})$ is bounded in $L^{14/3}(0,T;L^{q_{m}}(\Omega))$ and $(\widetilde{g}_k(n_k))$ is bound\-ed in $L^2(0,T;L^{r_{m}}(\Omega))$ for some numbers $q_m$, $r_m\ge 1$ with $m\in\N$. By Lemma \ref{lem.est1}, we have $q_1=10/3$ and $r_1=10/7$. We estimate
\begin{align}\label{2.sqrtTnaV}
  \|\sqrt{T_k(n_k)}\na V_k\|_{L^{14/3}(0,T;L^a(\Omega))}
  \le \|\sqrt{T_k(n_k)}\|_{L^{14/3}(0,T;L^{q_m}(\Omega))}
  \|\na V_k\|_{L^\infty(0,T;L^2(\Omega))} \le C,
\end{align} 
where $1/a = 1/q_m+1/2$. This shows that
\begin{align}\label{2.nag}
  \|\na \widetilde{g}_k(n_k)\|_{L^2(0,T;L^a(\Omega))}
  &\le \|\na \widetilde{g}_k(n_k)
  -\sqrt{T_k(n_k)}\na V_k\|_{L^2(0,T;L^a(\Omega))} \\
  &\phantom{xx}+ \|\sqrt{T_k(n_k)}\na V_k\|_{L^{2}(0,T;L^a(\Omega))} 
  \le C. \nonumber
\end{align}
Then the continuous embedding $W^{1,a}(\Omega)\hookrightarrow L^{r_{m+1}}(\Omega)$ with $1/r_{m+1}=1/a-1/d=1/q_m+1/2-1/d$ implies that
$\widetilde{g}_k(n_k)$ is uniformly bounded in $L^2(0,T;L^{r_{m+1}}(\Omega))$. We deduce from $\sqrt{T_k(s)}\le C\widetilde{g}_k(s)^{3/7}$ (see Lemma \ref{lem.g}) that $\sqrt{T_k(n_k)}$ is uniformly bounded in $L^{14/3}(0,T;L^{q_{m+1}}(\Omega))$, where $q_{m+1}=7r_{m+1}/3$. This leads to the recursion
\begin{align*}
  \frac{1}{q_{m+1}} = \frac{3}{7}\frac{1}{r_{m+1}}
  = \frac37\bigg(\frac{1}{q_m}+\frac12-\frac{1}{d}\bigg).
\end{align*}
The sequence $(1/q_m)$ is nonincreasing (if $d\le 4$) and bounded from below. Thus, it possesses the limit $q^*$ that satisfies
\begin{align*}
  \frac{1}{q^*} = \frac37\bigg(\frac{1}{q^*}+\frac12-\frac{1}{d}\bigg)
  \quad\mbox{and hence}\quad q^* = \frac{8d}{3(d-2)}.
\end{align*}
We can perform this recursion only a finite number of times as otherwise the powers of the embedding constant may diverge. Thus, 
$q<q^*=8$ if $d=3$ and, since $q_{m+1}=7q_m/3\to\infty$ as $m\to\infty$, $q<\infty$ if $d=2$. Furthermore, $1/r=1/q+1/2-1/d$, which gives $r<24/7$ if $d=3$ and $r<\infty$ if $d=2$. 

The uniform bound for $\sqrt{T_k(n_k)}\na V_k$ follows from \eqref{2.sqrtTnaV} because of $1/a=1/q+1/2=(2+q)/(2q)$. Estimate \eqref{2.nag} then implies the bound for $\na \widetilde{g}_k(n_k)$. We have shown in the proof of Lemma \ref{lem.est1} that $\widetilde{g}_k'$ is bounded from below by a positive constant such that $C|\na n_k|\le |\widetilde{g}'_k(n_k)\na n_k|=|\na \widetilde{g}_k(n_k)|$, proving the last uniform bound. Finally, the estimates for $p_k$ are proved analogously.
\end{proof}

The following bounds are needed for the Aubin--Lions lemma.

\begin{lemma}[Uniform estimates III]\label{lem.est3}
Under the assumptions of Lemma \ref{lem.est2}, there exists a constant $C>0$ such that for all $k\in\N$,
\begin{align*}
  \|n_k\|_{L^{2}(0,T;W^{1,2q/(2+q)}(\Omega))}
  + \|p_k\|_{L^{2}(0,T;W^{1,2q/(2+q)}(\Omega))} &\le C, \\
  \|\pa_t n_k\|_{L^{7/5}(0,T;W_D^{1,2q/(q+4)}(\Omega)')}
  + \|\pa_t p_k\|_{L^{7/5}(0,T;W_D^{1,2q/(q+4)}(\Omega)')} &\le C, \\
  \|D_k\|_{L^2(0,T;H^1(\Omega))} 
  + \|\pa_t D_k\|_{L^2(0,T;H^1(\Omega)')} &\le C.
\end{align*}
\end{lemma}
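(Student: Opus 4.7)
The plan is to combine the uniform spatial bounds from Lemmas~\ref{lem.est1}--\ref{lem.est2} with the weak forms of~\eqref{2.nk}--\eqref{2.Dk}, splitting the argument into three stages.

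\emph{Spatial bounds.} For $n_k,p_k$, Lemma~\ref{lem.est2} already gives $\na n_k,\na p_k\in L^2(0,T;L^{2q/(2+q)}(\Omega))$. Combining with the $L^\infty(0,T;L^{5/3})$ bound from Lemma~\ref{lem.est1} and the embedding $L^{5/3}(\Omega)\hookrightarrow L^{2q/(2+q)}(\Omega)$ on the bounded domain (since $2q/(2+q)<8/5<5/3$ in $d=3$, trivially in lower dimensions) yields the $L^2(0,T;W^{1,2q/(2+q)})$ bound. For $D_k$, Lemma~\ref{lem.est1} supplies $\na D_k\in L^2(\Omega_T)$, while testing~\eqref{2.Dk} against $\varphi\equiv 1$ together with the no-flux boundary condition on all of $\pa\Omega$ shows that $|\Omega|^{-1}\int_\Omega D_k(t)\,dx=D_\Omega^I$ is conserved. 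Poincar\'e--Wirtinger then upgrades the gradient bound to the missing $L^2(\Omega_T)$ control on $D_k$ itself, so $D_k\in L^2(0,T;H^1(\Omega))$.

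\emph{Time derivative of $D_k$.} The flux decomposition~\eqref{2.fluxk} expresses $J_D=\sqrt{T_{k/(k+1)}(D_k)}\bigl(\na\widetilde{h}_k(D_k)+\sqrt{T_{k/(k+1)}(D_k)}\na V_k\bigr)$; the first factor is bounded by $1$ pointwise while the parenthesized factor is controlled in $L^2(\Omega_T)$ by the energy inequality~\eqref{2.Ekineq}. Thus $J_D\in L^2(\Omega_T)$ uniformly, and testing~\eqref{2.Dk} against $\varphi\in H^1(\Omega)$ gives $|\langle\pa_t D_k,\varphi\rangle|\le\|J_D\|_{L^2}\|\na\varphi\|_{L^2}$, hence $\pa_t D_k\in L^2(0,T;H^1(\Omega)')$ uniformly.

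\emph{Time derivatives of $n_k,p_k$.} I would write $J_n=\sqrt{T_k(n_k)}\na\widetilde{g}_k(n_k)-T_k(n_k)\na V_k$ and bound each piece in $L^{7/5}(0,T;L^{2q/(q+4)}(\Omega))$. For the diffusive piece, H\"older in space with $\sqrt{T_k(n_k)}\in L^{14/3}(0,T;L^q)$ and $\na\widetilde{g}_k(n_k)\in L^2(0,T;L^{2q/(q+2)})$ (Lemma~\ref{lem.est2}) produces the space exponent $(1/q+(q+2)/(2q))^{-1}=2q/(q+4)$, while H\"older in time with $3/14+1/2=5/7$ produces the time exponent $7/5$. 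For the drift piece, writing $T_k(n_k)=(\sqrt{T_k(n_k)})^2\in L^{7/3}(0,T;L^{q/2})$ and pairing with $\na V_k\in L^\infty(0,T;L^2)$ (from the energy inequality) gives $T_k(n_k)\na V_k\in L^{7/3}(0,T;L^{2q/(q+4)})\hookrightarrow L^{7/5}(0,T;L^{2q/(q+4)})$ on the bounded time interval. Consequently $J_n\in L^{7/5}(0,T;L^{2q/(q+4)}(\Omega))$ uniformly, and testing~\eqref{2.nk} yields $\pa_t n_k\in L^{7/5}(0,T;W_D^{1,2q/(q+4)}(\Omega)')$; the bound for $\pa_t p_k$ is identical. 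The argument is mechanical; the only delicate point is the H\"older bookkeeping for $J_n$, where the two pieces must be forced onto the same space-time Lebesgue scale, which in particular requires using the improved integrability $\sqrt{T_k(n_k)}\in L^{14/3}(L^q)$ from Lemma~\ref{lem.est2} rather than the weaker $L^\infty(L^{10/3})$ of Lemma~\ref{lem.est1}.
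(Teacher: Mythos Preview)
Your proof is correct and follows essentially the same strategy as the paper. Two minor variations worth noting: for the $L^2(\Omega_T)$ control on $D_k$ you invoke mass conservation plus Poincar\'e--Wirtinger, whereas the paper instead observes that $S_k^2/T_{k/(k+1)}$ is bounded below so that $H_k(D_k)\ge CD_k^2$, giving even an $L^\infty(0,T;L^2)$ bound directly from the energy; and for $\pa_t n_k$ you split $J_n$ into diffusive and drift pieces and estimate each separately, while the paper keeps the flux in the factored form $\sqrt{T_k(n_k)}\bigl(\na\widetilde{g}_k(n_k)-\sqrt{T_k(n_k)}\na V_k\bigr)$ and applies H\"older once with the same exponents $L^{14/3}(L^q)\times L^2(L^{2q/(2+q)})$. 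Both routes yield the stated bounds.
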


\begin{proof}
The bound for $n_k$ follows from the gradient bounds in Lemma \ref{lem.est2} and the bound for $n_k$ in $L^\infty(0,T;L^{5/3}(\Omega))$, which is a consequence of the energy inequality \eqref{2.Ekineq} and the inequality $s^{5/3}\le C(G_k(s)+1)$ from Lemma \ref{lem.g}. By the chain rule \eqref{2.chainrule} (for $\delta=0$), the evolution equation for $n_k$ reads as
\begin{align*}
  \pa_t n_k = \diver\big[\sqrt{T_k(n_k)}\big(\na \widetilde{g}_k(n_k)
  - \sqrt{T_k(n_k)}\na V_k\big)\big].
\end{align*}
The term $\sqrt{T_k(n_k)}$ is uniformly bounded in $L^{14/3}(0,T;L^q(\Omega))$, while $\na \widetilde{g}_k(n_k) - \sqrt{T_k(n_k)}\na V_k$ is uniformly bounded in $L^2(0,T;L^{2q/(2+q)}(\Omega))$. Hence, $(\pa_t n_k)$ is bounded in $L^{7/5}(0,T;$ $L^{2q/(4+q)}(\Omega))$ (we choose $q\ge 4$ to guarantee that $2q/(4+q)\ge 1$). The estimates for $p_k$ are shown in a similar way.

We turn to the bounds for $D_k$. We know from the energy inequality \eqref{2.Ekineq} that $(H_k(D_k))$ is bounded in $L^\infty(0,T;L^1(\Omega))$ and from Lemma \ref{lem.est1} that $(\na D_k)$ is bounded in $L^2(\Omega_T)$. Proceeding as in the proof of Lemma \ref{lem.g}, we infer that $S_k^2/T_{k/(k+1)}$ is bounded from below by a positive constant. This implies that $H_k(D_k)\ge CD_k^2$, which yields a bound for $(D_k)$ in $L^\infty(0,T;L^2(\Omega))$. This shows that $(D_k)$ is bounded in $L^2(0,T;H^1(\Omega))$. Finally, since $(T_{k/(k+1)}(D_k))$ is bounded in $L^\infty(\Omega_T)$, the sequence 
\begin{align*}
  (\na \widetilde{h}_k(D_k)+T_{k/(k+1)}(D_k)^{1/2}\na V_k)
\end{align*} 
is bounded in $L^2(\Omega_T)$. Therefore,
\begin{align*}
  \pa_t D_k = \diver\big[T_{k/(k+1)}(D_k)^{1/2}\big(\na\widetilde{h}_k(D_k)
  + T_{k/(k+1)}(D_k)^{1/2}\na V_k\big)\big]
\end{align*}
is uniformly bounded in $L^2(0,T;H^1(\Omega)')$, finishing the proof.
\end{proof}

\subsection{Limit $k\to\infty$ in the equations for $n_k$ and $p_k$}

Lemma \ref{lem.est3} and the compact embedding $W^{1,2q/(2+q)}(\Omega)\hookrightarrow L^r(\Omega)$ for $r<24/7$ (if $d\le 3$) allow us to apply the Aubin--Lions lemma to infer the existence of a subsequence that is not relabeled such that, as $k\to\infty$,
\begin{align*}
  (n_k,p_k)\to (n,p) 
  &\quad\mbox{strongly in }L^2(0,T;L^{r}(\Omega))^2, \\
  D_k\to D &\quad\mbox{strongly in }L^2(\Omega_T), \\
  (\na n_k,\na p_k)\rightharpoonup (\na n,\na p)
  &\quad\mbox{weakly in }L^2(0,T;L^{2q/(2+q)}(\Omega)), \\
  (\pa_t n_k,\pa_t p_k)\rightharpoonup (\pa_t n,\pa_t p)
  &\quad\mbox{weakly in }L^{7/5}(0,T;W_D^{1,2q/(q+4)}(\Omega)')^2, \\
  \pa_t D_k \rightharpoonup \pa_t D
  &\quad\mbox{weakly in }L^2(0,T;H^1(\Omega)').
\end{align*} 
The $L^\infty(0,T;L^{5/3}(\Omega))$ bound for $n_k$ from Lemma \ref{lem.est1} implies that
\begin{align*}
  \|T_k(n_k)-n_k\|_{L^1(\Omega_T)}
  = \int_0^T\int_{\{n_k>k\}}|k-n_k|dxdt
  \le \int_0^T\int_\Omega\frac{n_k^{5/3}}{k^{2/3}}dxdt
  \le \frac{C}{k^{2/3}}\to 0.
\end{align*}
This shows that $\sqrt{T_k(n_k)}-\sqrt{n_k}\to 0$ strongly in $L^2(\Omega_T)$ and in particular $\sqrt{T_k(n_k)}\to \sqrt{n}$ strongly in $L^2(\Omega_T)$. In fact, in view of the $L^\infty(0,T;L^{5/3}(\Omega))$ bound for $n_k$, we even have strong convergence for $\sqrt{T_k(n_k)}$ in $L^s(\Omega_T)$ for any $s<10/3$. The $L^2(\Omega_T)$ bound for $\na V_k$ shows that $\na V_k\rightharpoonup\na V$ weakly in $L^2(\Omega_T)$. Hence,
\begin{align*}
  \sqrt{T_k(n_k)}\na V_k\rightharpoonup\sqrt{n}\na V
  \quad\mbox{weakly in }L^1(\Omega_T).
\end{align*}
Thanks to Lemma \ref{lem.est2}, this convergence also holds in $L^{14/3}(0,T;L^{r}(\Omega))$ with $r<8/5$. Then, since $s<10/3$ and $q<8$ can be chosen in such a way that $1/s+(2+q)/(2q)<1$,
\begin{align}\label{2.convTV}
  T_k(n_k)\na V_k = \sqrt{T_k(n_k)}\cdot\sqrt{T_k(n_k)}\na V_k
  \rightharpoonup n\na V\quad\mbox{weakly in }L^1(\Omega_T).
\end{align}
Similarly, we have $T_k(p_k)\na V_k\rightharpoonup T_k(p_k)\na V_k$ weakly in $L^1(\Omega_T)$. 

Next, we prove the weak convergence of $(\sqrt{T_k(n_k)}\na\widetilde{g}_k(n_k))$.

\begin{lemma}\label{lem.limTnag}
It holds that, up to a subsequence, 
\begin{align*}
  \sqrt{T_k(n_k)}\big(\na\widetilde{g}_k(n_k)-\sqrt{T_k(n_k)}\na V_k\big)
  \rightharpoonup n\na g(n)-n\na V
  &\quad\mbox{weakly in }L^1(\Omega_T), \\
  \sqrt{T_k(p_k)}\big(\na\widetilde{g}_k(p_k)+\sqrt{T_k(p_k)}\na V_k\big)
  \rightharpoonup p\na g(p) + p\na V
  &\quad\mbox{weakly in }L^1(\Omega_T).
\end{align*}
\end{lemma}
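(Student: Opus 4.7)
The plan is to split the flux via the identity
\[
  \sqrt{T_k(n_k)}\big(\na\widetilde{g}_k(n_k)-\sqrt{T_k(n_k)}\na V_k\big) = \sqrt{T_k(n_k)}\,\na\widetilde{g}_k(n_k) - T_k(n_k)\na V_k
\]
and to handle the two summands separately. The drift part $T_k(n_k)\na V_k$ has already been shown in \eqref{2.convTV} to converge weakly in $L^1(\Omega_T)$ to $n\na V$, so the task reduces to proving that $\sqrt{T_k(n_k)}\,\na\widetilde{g}_k(n_k)\rightharpoonup n\na g(n)$ weakly in $L^1(\Omega_T)$, with the analogous statement for $p_k$ following by the same argument.

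First I would identify the weak limit of $(\na\widetilde{g}_k(n_k))$ in $L^2(0,T;L^{2q/(2+q)}(\Omega))$. Lemma~\ref{lem.est2} supplies the uniform bound, so up to a subsequence there is a weak limit $\xi$. Since $\widetilde{g}_k\to\widetilde{g}$ locally uniformly on $[0,\infty)$ (the truncation disappears on any compact set once $k$ is large) and $n_k\to n$ almost everywhere in $\Omega_T$ by the Aubin--Lions strong convergence, one gets $\widetilde{g}_k(n_k)\to\widetilde{g}(n)$ a.e.; combined with the uniform $L^2(0,T;L^{r}(\Omega))$ bound from Lemma~\ref{lem.est2}, Vitali's convergence theorem upgrades this to strong convergence in $L^1(\Omega_T)$. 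Uniqueness of the distributional gradient then forces $\xi=\na\widetilde{g}(n)$.

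Next I would pass to the limit in the product $\sqrt{T_k(n_k)}\,\na\widetilde{g}_k(n_k)$. From the uniform $L^\infty(0,T;L^{10/3}(\Omega))$ bound for $\sqrt{T_k(n_k)}$ together with a.e.\ convergence to $\sqrt{n}$, Vitali yields strong convergence in $L^s(\Omega_T)$ for any $s<10/3$. Selecting $s<10/3$ and $q<8$ so that $1/s+(2+q)/(2q)<1$, the standard strong--weak product argument produces
\[
  \sqrt{T_k(n_k)}\,\na\widetilde{g}_k(n_k) \rightharpoonup \sqrt{n}\,\na\widetilde{g}(n) \quad\text{weakly in } L^t(\Omega_T)
\]
for some $t>1$, and hence in $L^1(\Omega_T)$. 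The identity $\sqrt{n}\,\na\widetilde{g}(n)=n\na g(n)$ follows from the chain rule $\widetilde{g}'(s)=\sqrt{s}\,g'(s)$ together with the a.e.\ existence of $\na n$ guaranteed by Lemma~\ref{lem.est2}. Combining with \eqref{2.convTV} completes the argument for $n_k$, and the analogous reasoning handles $p_k$.

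The main obstacle will be the identification step in the second paragraph: showing that the weak $L^2(0,T;L^{2q/(2+q)})$ limit of $\na\widetilde{g}_k(n_k)$ coincides with $\na\widetilde{g}(n)$ requires both the a.e.\ convergence of $n_k$ and sufficient equi-integrability of $\widetilde{g}_k(n_k)$ to justify the Vitali step and the ensuing uniqueness-of-distributional-gradient argument. Once this is secured, the remaining work is a careful bookkeeping of Lebesgue exponents to ensure that the strong--weak product actually lives in a space continuously embedded in $L^1(\Omega_T)$.
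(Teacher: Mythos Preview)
Your plan is sound and the argument can be completed, but the route differs from the paper's and you gloss over one point that is not entirely free.

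The paper factorizes differently: it writes $\sqrt{T_k(n_k)}\,\na\widetilde{g}_k(n_k) = \big[\sqrt{T_k(n_k)}\,\widetilde{g}_k'(n_k)\big]\,\na n_k$ and proves that the coefficient converges \emph{strongly} to $ng'(n)$ in $L^2(0,T;L^r(\Omega))$. That strong convergence relies on the pointwise Lipschitz estimate $|n_kg'(n_k)-ng'(n)|\le C|n_k-n|$, which in turn comes from the bound $|(zg'(z))'|\le C$ established separately in Lemma~\ref{lem.DzDg}. Combined with the weak convergence of $\na n_k$ this delivers the limit $ng'(n)\,\na n=n\na g(n)$ directly, with no chain rule issue at the end. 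Your factorization $\sqrt{T_k(n_k)}\cdot\na\widetilde{g}_k(n_k)$ avoids Lemma~\ref{lem.DzDg} altogether and is in that sense more elementary; it also fits naturally with the bounds already collected in Lemmas~\ref{lem.est1}--\ref{lem.est2}.

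The price you pay is the final identification $\sqrt{n}\,\na\widetilde{g}(n)=n\na g(n)$, which is \emph{not} an immediate consequence of the relation $\widetilde{g}'(s)=\sqrt{s}\,g'(s)$. The function $\widetilde g$ is not Lipschitz (by Lemma~\ref{lem.Dg} its derivative behaves like $s^{-1/2}$ near zero), so the standard Sobolev chain rule $\na\widetilde g(n)=\widetilde g'(n)\,\na n$ does not apply as stated. This can be repaired: since $\widetilde g'\ge c>0$ (shown in the proof of Lemma~\ref{lem.est1}), the inverse $\widetilde g^{-1}$ is Lipschitz, and the chain rule applied to $n=\widetilde g^{-1}(\widetilde g(n))$ yields $\na n=(\widetilde g'(n))^{-1}\na\widetilde g(n)$ a.e.\ on $\{n>0\}$; on $\{n=0\}=\{\widetilde g(n)=0\}$ both $\na n$ and $\na\widetilde g(n)$ vanish a.e.\ by the standard level-set property of Sobolev functions. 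You should spell this out rather than invoke ``the chain rule'' in one line.
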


\begin{proof}
First, we show that $\sqrt{T_k(n_k)}\widetilde{g}_k'(n_k)$ converges strongly. To this end, we estimate
\begin{align}\label{2.weak1}
  \big\|&\sqrt{T_k(n_k)}\widetilde{g}_k'(n_k) - ng'(n)
  \big\|_{L^2(0,T;L^{5}(\Omega))} \\
  &\le \big\|\sqrt{T_k(n_k)}\widetilde{g}_k'(n_k) - n_kg'(n_k)
  \big\|_{L^2(0,T;L^{5}(\Omega))}
  + \|n_kg'(n_k)-ng'(n)\|_{L^2(0,T;L^{5}(\Omega))}. \nonumber
\end{align}
The first integrand vanishes on $\{n_k\le k\}$. Therefore, on the set $\{n_k>k\}$, using Lemma \ref{lem.Dg},
\begin{align*}
  \big|&\sqrt{T_k(n_k)}\widetilde{g}_k'(n_k) - n_kg'(n_k)\big|^5
  = |(n_k^{2/3}-k^{2/3})n_k^{1/3}g'(n_k)|^{5} \\
  &\le |n_kg'(n_k)|^{5} 
  \le C|n_k(n_k^{-1}+n_k^{-1/3})|^{5} \le C(1+n_k^{10/3})
  \le C\bigg(\frac{n_k}{k}+\frac{n_k^{10/3+\eps}}{k^{\eps}}\bigg)
\end{align*}
for $\eps>0$ such that $10/3+\eps\le r<24/7$, which shows that
\begin{align*}
  \big\|\sqrt{T_k(n_k)}\widetilde{g}_k'(n_k) - n_kg'(n_k)
  \big\|_{L^2(0,T;L^{5}(\Omega))}^2
  \le \frac{C}{k^{2\eps}}\int_0^T\big(\|n_k\|_{L^1(\Omega)} 
  + \|n_k\|_{L^r(\Omega)}^{r}\big)^{2/5}dt\to 0,
\end{align*}
since $(n_k)$ is bounded in $L^2(0,T;L^r(\Omega))$. We show in Lemma \ref{lem.DzDg} that $|(zg'(z))'|=|g'(z)+zg''(z)|$ is bounded for $z>0$. Hence,
\begin{align*}
  |n_kg'(n_k)-ng'(n)|^r 
  = \bigg|\int_{n_k}^n\frac{d}{dz}(zg'(z))dz\bigg|^r
  \le C|n_k-n|^r.
\end{align*}
Then the strong convergence $n_k\to n$ in $L^2(0,T;L^r(\Omega))$ implies that 
\begin{align*}
  \|n_kg'(n_k)-ng'(n)\|_{L^2(0,T;L^r(\Omega))}\to 0.  
\end{align*}
We conclude from \eqref{2.weak1} that
\begin{align*}
  \sqrt{T_k(n_k)}\widetilde{g}_k'(n_k) \to ng'(n)
  \quad\mbox{strongly in }L^2(0,T;L^r(\Omega)).
\end{align*}
Consequently, the weak convergence $\na n_k\rightharpoonup\na n$ in $L^2(0,T;L^{2q/(2+q)}(\Omega))$ gives
\begin{align*}
  \sqrt{T_k(n_k)}\na\widetilde{g}_k(n_k)
  = \sqrt{T_k(n_k)}\widetilde{g}_k'(n_k)\na n_k
  \rightharpoonup ng'(n)\na n = n\na g(n)
\end{align*}
weakly in $L^1(\Omega_T)$. Combining this result with \eqref{2.convTV} shows that
\begin{align*}
  \sqrt{T_k(n_k)}\big(\na\widetilde{g}_k(n_k) - \sqrt{T_k(n_k)}
  \na V_k\big) \rightharpoonup n\na g(n)-n\na V
  \quad\mbox{weakly in }L^1(\Omega_T),
\end{align*}
which ends the proof.
\end{proof}

We need the convergence of $(\na\widetilde{g}(n_k))$ and $(\na\widetilde{g}(p_k))$ to pass to the limit in the energy inequality.

\begin{lemma}
It holds that 
\begin{align*}
  (\na\sqrt{n_k},\na\sqrt{p_k})\rightharpoonup(\na\sqrt{n},\na\sqrt{p})
  &\quad\mbox{weakly in }L^2(0,T;L^{2q/(2+q)}(\Omega))^2, \\
  (\na\widetilde{g}_k(n_k),\na\widetilde{g}_k(p_k))
  \rightharpoonup (\sqrt{n}\na g(n),\sqrt{p}\na g(p))
  &\quad\mbox{weakly in }L^1(\Omega_T)^2.
\end{align*}
\end{lemma}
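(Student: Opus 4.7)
The plan splits into two stages: first obtaining weak compactness and identifying the limit of $\na\sqrt{n_k}$, and then using a chain-rule identity to deduce the limit of $\na\widetilde{g}_k(n_k)$.

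\emph{Stage 1: weak convergence of $\na\sqrt{n_k}$.} From the asymptotics in Lemma~\ref{lem.Dg}, an elementary case analysis yields the uniform lower bound $\widetilde{g}_k'(s)\ge Cs^{-1/2}$ for all $s>0$ and $k\ge 1$: for $s\le k$ this follows from $\widetilde{g}_k'(s)=\sqrt{s}g'(s)\ge Cs^{-1/2}$, while for $s>k$ one has $\widetilde{g}_k'(s)\ge Ck^{1/6}\ge Ck^{-1/2}\ge Cs^{-1/2}$. Combined with the chain rule identity $|\na\sqrt{n_k}|=|\na n_k|/(2\sqrt{n_k})$, this gives the pointwise control $|\na\sqrt{n_k}|\le C|\na\widetilde{g}_k(n_k)|$, and hence, by Lemma~\ref{lem.est2}, a uniform bound for $\na\sqrt{n_k}$ in $L^2(0,T;L^{2q/(2+q)}(\Omega))$. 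To identify the weak limit, observe that $|\sqrt{a}-\sqrt{b}|^2\le|a-b|$ and the strong convergence $n_k\to n$ in $L^2(0,T;L^r(\Omega))$ imply $\sqrt{n_k}\to\sqrt{n}$ strongly in $L^2(\Omega_T)$, so passing to the distributional gradient pins the limit down to $\na\sqrt{n}$. The argument for $p_k$ is identical.

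\emph{Stage 2: weak convergence of $\na\widetilde{g}_k(n_k)$.} The chain rule gives the identity
\begin{equation*}
\na\widetilde{g}_k(n_k)=2\sqrt{n_k}\,\widetilde{g}_k'(n_k)\,\na\sqrt{n_k},
\end{equation*}
so the task reduces to showing that $2\sqrt{n_k}\widetilde{g}_k'(n_k)\to 2ng'(n)$ strongly in a space compatible with the weak convergence of $\na\sqrt{n_k}$ from Stage~1. On $\{n_k\le k\}$ one has $2\sqrt{n_k}\widetilde{g}_k'(n_k)=2n_kg'(n_k)$, and $n_kg'(n_k)\to ng'(n)$ strongly by the Lipschitz estimate on $zg'(z)$ (Lemma~\ref{lem.DzDg}) together with $n_k\to n$. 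On $\{n_k>k\}$, where $2\sqrt{n_k}\widetilde{g}_k'(n_k)=2k^{1/6}n_k^{5/6}g'(n_k)$, both this quantity and $2n_kg'(n_k)$ are dominated by $C(1+n_k^{2/3})$; since the measure of $\{n_k>k\}$ decays like $k^{-r}$ by Chebyshev, a H\"older argument using the high integrability of $n_k$ from Lemma~\ref{lem.est2} makes this contribution vanish in a suitable $L^p(\Omega_T)$. Combining this strong convergence with $\na\sqrt{n_k}\rightharpoonup\na\sqrt{n}$ from Stage~1 yields the weak $L^1$ limit $2ng'(n)\na\sqrt{n}$, which by the chain rule equals $\sqrt{n}g'(n)\na n=:\sqrt{n}\na g(n)$. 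The argument for $p_k$ is identical.

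The main obstacle is the strong convergence in Stage~2: the truncation in $\widetilde{g}_k$ alters the analytic form of the multiplier precisely on $\{n_k>k\}$, and one must verify that the vanishing measure of this set, combined with the highest available integrability of $n_k$, dominates the polynomial growth of the multiplier in the relevant Lebesgue norm. Once this is in place, the identification of the weak $L^1$ limit follows by the standard strong-times-weak product argument.
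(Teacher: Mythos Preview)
Your proposal is correct and follows essentially the same route as the paper. The paper also obtains the uniform bound on $\na\sqrt{n_k}$ from the pointwise inequality $\widetilde{g}_k'(s)\ge Cs^{-1/2}$, then writes $\na\widetilde{g}_k(n_k)=2\sqrt{n_k}\,\widetilde{g}_k'(n_k)\,\na\sqrt{n_k}$ and establishes the strong convergence $\sqrt{n_k}\,\widetilde{g}_k'(n_k)\to ng'(n)$ in $L^2(0,T;L^r(\Omega))$ by exactly your splitting into $\{n_k\le k\}$ (Lipschitz bound on $zg'(z)$ from Lemma~\ref{lem.DzDg}) and $\{n_k>k\}$ (polynomial bound plus vanishing measure)---this is precisely the computation of Lemma~\ref{lem.limTnag} with $\sqrt{n_k}$ in place of $\sqrt{T_k(n_k)}$, which the paper simply cites. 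The only detail the paper makes more explicit than you do is the final H\"older check $(2+q)/(2q)+1/r<1$ for $q<8$, $r<24/7$ sufficiently large, ensuring the strong-times-weak product lands in $L^1(\Omega_T)$.
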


\begin{proof}
It follows from Lemma \ref{lem.Dg} that $\widetilde{g}'_k(s)=\sqrt{s}g'(s)\ge C(s^{-1/2}+s^{1/6})\ge Cs^{-1/2}$ for $s<k$ and $\widetilde{g}'_k(s)=k^{-1/6}s^{1/3}g'(s)\ge CC(s/k)^{1/6}(s^{-5/6}+s^{-1/2})\ge Cs^{-1/2}$ for $s>k$. Then the uniform bound for $\na\widetilde{g}_k(n_k)$ in Lemma \ref{lem.est2} implies directly the bound for $\na\sqrt{n_k}$ in $L^2(0,T;L^{2q/(2+q)}(\Omega))$, showing the first statement. The limit for $\na\sqrt{p_k}$ is shown analogously. Repeating the proof of Lemma \ref{lem.limTnag} with $\sqrt{n_k}$ instead of $\sqrt{T_k(n_k)}$, we obtain the convergence 
\begin{align*}
  \sqrt{n_k}\widetilde{g}'_k(n_k)\to ng'(n)
  \quad\mbox{strongly in }L^2(0,T;L^r(\Omega)).
\end{align*}
We combine this result with the weak convergence of $(\na\sqrt{n_k})$ to conclude that 
\begin{align*}
  \na\widetilde{g}_k(n_k)=2\sqrt{n_k}\widetilde{g}_k'(n_k)\na\sqrt{n_k}
  \rightharpoonup 2ng'(n)\na\sqrt{n} = \sqrt{n}\na g(n)
  \quad\mbox{weakly in }L^1(\Omega_T),
\end{align*}
using that $(2+q)/(2q)+1/r<1$ if we choose $q<8$ and $r<24/7$ sufficiently large. 
\end{proof}


\subsection{Limit $k\to\infty$ in the equation for $D_k$}

We prove the convergence of the terms in the equation for $D_k$. First, we consider  $T_{k/(k+1)}(D_k)^{1/2}\na\widetilde{h}_k(D_k)=T_{k/(k+1)}(D_k)^{1/2}\widetilde{h}'_k(D_k)\na D_k$. We introduce the functions
\begin{align*}
  L(s) &= -\log(1-s) \quad\mbox{for } 0\le s<1, \\
  L_k(s) &= \begin{cases}
  -\log(1-s) &\mbox{for }0\le s\le k/(k+1), \\
  (k+1)s-k+\log(k+1) &\mbox{for }s>k/(k+1).
  \end{cases}
\end{align*}
They satisfy the property $L'_k(D_k)=T_{k/(k+1)}(D_k)^{1/2}\widetilde{h}'_k(D_k)$ for all $0\le s<1$. Moreover, $L_k$ is nondecreasing, $L_k\le L_{k+1}$ on $[0,1)$, and $L_k$ converges to $L$ locally uniformly on $[0,1)$. The aim is to derive uniform bounds for $L_k(D_k)$ and to identify its weak limit.

\begin{lemma}\label{lem.LDH1}
There exists a constant $C>0$ such that 
\begin{align*}
  \|L_k(D_k)\|_{L^2(0,T;H^1(\Omega))}\le C.
\end{align*}
\end{lemma}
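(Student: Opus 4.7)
The plan is to handle the gradient and the function value separately, the latter relying crucially on mass conservation for the oxide vacancies.

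First, a direct computation from the definitions of $S_k^2$, $\widetilde h_k$, and $L_k$ shows that $L_k'(s) = S_k^2(s) = T_{k/(k+1)}(s)^{1/2}\widetilde h_k'(s)$ for every $s\ge 0$, so the chain rule yields
\[
   \nabla L_k(D_k) = T_{k/(k+1)}(D_k)^{1/2}\nabla \widetilde h_k(D_k) \quad\text{a.e.\ in }\Omega_T.
\]
Since $T_{k/(k+1)}\le 1$ pointwise, Lemma \ref{lem.est1} immediately delivers the uniform bound $\|\nabla L_k(D_k)\|_{L^2(\Omega_T)}\le \|\nabla \widetilde h_k(D_k)\|_{L^2(\Omega_T)}\le C$.

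Next, I would exploit the conservation law $\int_\Omega D_k(t)\,dx = D_\Omega^I\,\m(\Omega)$ together with the strict inequality $D_\Omega^I<1$ from Assumption (A4). Fix $\alpha\in(D_\Omega^I,1)$ and set $E_k(t):=\{x\in\Omega:D_k(x,t)\le\alpha\}$. Chebyshev's inequality gives
\[
  \m(\Omega\setminus E_k(t)) \le \frac{1}{\alpha}\int_\Omega D_k(t)\,dx = \frac{D_\Omega^I}{\alpha}\m(\Omega),
\]
and therefore $\m(E_k(t)) \ge c_0 := (1 - D_\Omega^I/\alpha)\m(\Omega) > 0$, a bound independent of $k$ and $t$. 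Since $L_k$ is nondecreasing and coincides with $L$ on $[0,k/(k+1)]$ while being the tangent-line continuation of the convex function $L$ on $(k/(k+1),\infty)$, we have $L_k\le L$ on $[0,1)$, so $0\le L_k(D_k(t)) \le L(\alpha) = -\log(1-\alpha) =: M$ on $E_k(t)$, uniformly in $k$.

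Finally, I would apply a Poincar\'e--Wirtinger inequality subordinated to the subset $E_k(t)$ of uniformly positive measure---this is the role of the nonlinear variant proved in Appendix \ref{sec.poincare}---to obtain, for a.e.\ $t\in(0,T)$,
\[
  \|L_k(D_k(t))\|_{L^2(\Omega)} \le C\|\nabla L_k(D_k(t))\|_{L^2(\Omega)} + CM,
\]
with $C=C(\Omega,c_0)$ independent of $k$ and $t$. Squaring, integrating over $(0,T)$, and combining with the gradient bound of the first step yields the claim. The main obstacle is ensuring the Poincar\'e constant does not degenerate as $k\to\infty$; this is precisely why the strict inequality $D_\Omega^I<1$ in Assumption (A4) is indispensable, as it provides the uniform lower bound on $\m(E_k(t))$ and prevents the ``good set'' where $L_k(D_k)$ is controlled from shrinking to zero.
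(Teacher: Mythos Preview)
Your proof is correct and follows essentially the same route as the paper's. The gradient bound via $L_k'(s)=T_{k/(k+1)}(s)^{1/2}\widetilde h_k'(s)\le\widetilde h_k'(s)$ is exactly the paper's argument, and your Chebyshev/good-set step together with the appeal to Appendix~\ref{sec.poincare} is precisely the mechanism encoded in Lemma~\ref{lem.poincare}: there one takes $A=(f(u)-f(\widehat u))^+$, bounds $\m(\{A=0\})$ from below via the mean constraint, and applies Poincar\'e--Wirtinger to $A$---your $\alpha$ plays the role of $\widehat u$ and your $E_k(t)$ is $\{A=0\}$.
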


\begin{proof}
The gradient bound follow immediately from $L'_k(s)<\widetilde{h}'_k(s)$ for $0<s<1$ and $|\na L_k(D_k)|=L'_k(D_k)|\na D_k|\le \widetilde{h}'_k(D_k)|\na D_k|=|\na\widetilde{h}_k(D_k)|$, showing that $(\na L_k(D_k))$ is bound\-ed in $L^2(\Omega_T)$. By mass conservation and Assumption (A4), we have
\begin{align*}
  \frac{1}{\m(\Omega)}\int_\Omega D_k dx 
  = \frac{1}{\m(\Omega)}\int_\Omega D^I dx = D_\Omega^I < 1.
\end{align*}
Thus, the conditions of Lemma \ref{lem.poincare} in Appendix \ref{sec.poincare} are satisfied, and we infer from the gradient bound that $L_k(D_k)$ is bounded in $L^2(\Omega_T)$. Lemma \ref{lem.poincare} now finishes the proof.
\end{proof}

The uniform bound in Lemma \ref{lem.LDH1} implies the existence of a subsequence such that
\begin{align*}
  L_k(D_k)\rightharpoonup L^* \quad\mbox{weakly in }L^2(\Omega_T)
  \mbox{ as }k\to\infty.
\end{align*}
The next aim is to identify $L^*=L(D)$, where we recall that $D$ is the strong $L^2(\Omega_T)$ limit of $(D_k)$. 

First, we claim that $L(D)\in L^2(\Omega_T)$ and $D<1$ a.e.\ in $\Omega_T$. Indeed, we have $D_\ell\to D$ a.e.\ in $\Omega_T$ as $\ell\to\infty$, up to a subsequence. Hence, since $L_k$ is continuous and $L_k\le L_{k+1}$, for any $k\in\N$,
\begin{align*}
  \int_0^T&\int_\Omega L_k(D)^2 dxdt
  = \int_0^T\int_\Omega \lim_{\ell\to\infty}L_k(D_\ell)^2 dxdt
  = \int_0^T\int_\Omega \liminf_{\ell\to\infty}L_k(D_\ell)^2 dxdt \\
  &\le \int_0^T\int_\Omega \liminf_{\ell\to\infty}L_\ell(D_\ell)^2 dxdt 
  \le \liminf_{\ell\to\infty}\int_0^T\int_\Omega L_\ell(D_\ell)^2 dxdt,
\end{align*}
where the last step follows from Fatou's lemma. The last integral is uniformly bounded. Therefore, $(L_k(D))$ is bounded in $L^2(\Omega_T)$, and again by Fatou's lemma,
\begin{align*}
  \int_0^T\int_\Omega L(D)^2 dxdt \le \liminf_{k\to\infty}
  \int_0^T\int_\Omega L_k(D)^2 dxdt \le C.
\end{align*}
The $L^2(\Omega_T)$-bound for $L(D)=-\log(1-D)$ implies that $D<1$ a.e.

To identify $L^*$ with $L(D)$, we set
\begin{align*}
  D_\eta := (1-\eta)D+\eta, \quad D_{k,\eta} :=(1-\eta)D_k+\eta
  \quad\mbox{for }0<\eta<1.
\end{align*}
Because of the strong convergence of $D_k$, we clearly have $D_{k,\eta}\to D_\eta$ strongly in $L^2(\Omega_T)$ for any fixed $0<\eta<1$. The proof of Lemma \ref{lem.LDH1} shows that $(L_k(D_{k,\eta}))$ is bounded in $L^2(0,T;H^1(\Omega))$, and the previous arguments imply that $L(D_\eta)\in L^2(\Omega_T)$.
Furthermore, let $\psi:\Omega_T\to[0,1]$ be a smooth function and set
\begin{equation*}
	D_{\eta,\psi} := (1 - \eta \psi)D + \eta \psi = D + (1-D)\eta \psi, \quad \mbox{for } 0 < \eta < 1.
\end{equation*}
Since $D_{\eta,\psi} \leq D_\eta$, the monotonicity of $L$ implies that $L(D_{\eta,\psi}) \in L^2(\Omega_T)$. With these preparations, we can prove that $L^*=L(D)$.
\begin{lemma}\label{lem.Lstar}
It holds that $L^*=L(D)$ and
\begin{align*}
  L_k(D_k)\rightharpoonup L(D)\quad\mbox{weakly in }L^2(0,T;H^1(\Omega)).
\end{align*}
\end{lemma}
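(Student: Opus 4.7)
I would establish $L^* = L(D)$ a.e.\ via a two-sided argument: the monotonicity of the sequence $(L_k)$ in $k$ gives the lower bound $L^* \geq L(D)$, while the monotonicity of each individual $L_k$ --- exploited through a Minty trick with the perturbation $D_{\eta,\psi}$ --- gives the matching upper bound $L^* \leq L(D)$. Once the identification $L^* = L(D)$ is known in $L^2(\Omega_T)$, the weak convergence in $L^2(0,T;H^1(\Omega))$ is immediate from the uniform $H^1$ bound of Lemma \ref{lem.LDH1} and uniqueness of weak limits.

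\textbf{Lower bound.} For any fixed $m \in \N$ and all $k \geq m$, the pointwise inequality $L_m \leq L_k$ on $[0,1)$ gives $L_k(D_k) \geq L_m(D_k)$ a.e. Since $L_m$ is bounded and continuous on $[0,1]$, the strong $L^2$ convergence $D_k \to D$ transports to $L_m(D_k) \to L_m(D)$ strongly in $L^2(\Omega_T)$. Testing against an arbitrary $\varphi \in L^2(\Omega_T)$ with $\varphi \geq 0$ and passing $k \to \infty$ yields $\int L^*\varphi \geq \int L_m(D)\varphi$, whence $L^* \geq L_m(D)$ a.e. Monotone convergence $L_m(D) \nearrow L(D)$ as $m \to \infty$ concludes this half.

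\textbf{Upper bound via Minty.} Starting from the monotonicity inequality
\begin{equation*}
\int_0^T\int_\Omega \bigl(L_k(D_k) - L_k(D_{\eta,\psi})\bigr)\bigl(D_k - D_{\eta,\psi}\bigr)\,dx\,dt \geq 0,
\end{equation*}
I pass to $k \to \infty$ using weak convergence of $L_k(D_k)$ in $L^2$, strong convergence of $D_k$ in $L^2$, and strong $L^2$ convergence $L_k(D_{\eta,\psi}) \to L(D_{\eta,\psi})$. The last point is where convexity of $L$ enters: each $L_k$ is the tangent affine extension of $L$ at $s=k/(k+1)$, so by convexity $L_k \leq L$ pointwise on $[0,1)$; together with $L(D_{\eta,\psi}) \in L^2(\Omega_T)$ --- which holds because $D_{\eta,\psi} \leq D_\eta < 1$ a.e.\ --- dominated convergence applies. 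The limit reads $\int (L^* - L(D_{\eta,\psi}))(D - D_{\eta,\psi})\,dx\,dt \geq 0$. Substituting $D - D_{\eta,\psi} = -\eta\psi(1-D)$ and dividing by $-\eta$ yields
\begin{equation*}
\int_0^T\int_\Omega \bigl(L^* - L(D_{\eta,\psi})\bigr)\psi(1-D)\,dx\,dt \leq 0.
\end{equation*}
Using the explicit factorisation $1 - D_{\eta,\psi} = (1-D)(1-\eta\psi)$ one has $L(D_{\eta,\psi}) = L(D) - \log(1-\eta\psi)$, and with $\psi \in [0,1]$ this gives $0 \leq L(D_{\eta,\psi}) - L(D) \leq -\log(1-\eta) \to 0$ uniformly. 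Hence letting $\eta \to 0$ and invoking dominated convergence yields $\int (L^* - L(D))\psi(1-D)\,dx\,dt \leq 0$ for every smooth $\psi : \Omega_T \to [0,1]$. By density and the fact that $1-D > 0$ a.e., this forces $L^* \leq L(D)$ a.e.

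\textbf{Main obstacle.} The delicate step is the strong $L^2$ convergence $L_k(D_{\eta,\psi}) \to L(D_{\eta,\psi})$: because $D_{\eta,\psi}$ is not uniformly bounded away from $1$, the sequence $L_k(D_{\eta,\psi})$ could a priori blow up, and the crucial majorization $L_k \leq L$ provided by convexity --- combined with the $L^2$ integrability of the dominator $L(D_{\eta,\psi})$ inherited from the construction $D_{\eta,\psi} \leq D_\eta$ --- is precisely what makes dominated convergence applicable. Everything else (the four weak$\times$strong products in the Minty expansion, the $\eta \to 0$ passage, and the recovery of weak $H^1$ convergence from the uniform bound) is routine.
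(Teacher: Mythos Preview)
Your proof is correct and follows essentially the same two-sided strategy as the paper: a Minty-type argument with the perturbation $D_{\eta,\psi}$ for one inequality, and the monotonicity $L_m \le L_k$ (for $m\le k$) for the other. Your lower-bound step is slightly more direct than the paper's --- you pass to the limit in $L_m(D_k)\le L_k(D_k)$ using the Lipschitz continuity of $L_m$ and then let $m\to\infty$, whereas the paper tests against the weight $(1-D)\psi$ and invokes Fatou's lemma --- and your explicit use of convexity ($L_k\le L$) together with the factorisation $1-D_{\eta,\psi}=(1-D)(1-\eta\psi)$ makes the dominated-convergence passages, which the paper leaves implicit, fully transparent.
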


\begin{proof}
The proof is based on the monotonicity of $L_k$ (Minty trick). We pass to the limit $k\to\infty$ in
\begin{align*}
  0 \le \int_0^T\int_\Omega
  (L_k(D_{\eta,\psi})-L_k(D_k))(D_{\eta,\psi}-D_k)dxdt,
\end{align*}
leading to
\begin{align*}
  0 \le \frac{1}{\eta}\int_0^T\int_\Omega(L(D_{\eta,\psi})-L^*)
  (D_{\eta,\psi}-D)dxdt
  = \int_0^T\int_\Omega(L(D_{\eta,\psi})-L^*)(1-D)\psi dxdt.
\end{align*}
By dominated convergence, the limit $\eta\to 0$ gives
\begin{align}\label{2.Lstar}
  0 \le \int_0^T\int_\Omega(L(D)-L^*)(1-D)\psi dxdt.
\end{align}

Next, we show the inverse inequality. The monotonicity $L_k\le L_{k+1}$, the weak convergence of $(L_k(D_k))$, and Fatou's lemma yield the following inequalities:
\begin{align*}
  \int_0^T&\int_\Omega(1-D)\psi L_k(D)dxdt
  = \int_0^T\int_\Omega(1-D)\psi \lim_{\ell\to\infty}L_k(D_\ell)dxdt \\
  &\le \int_0^T\int_\Omega(1-D)\psi \liminf_{\ell\to\infty}L_\ell(D_\ell)dxdt
  \le \liminf_{\ell\to\infty}\int_0^T\int_\Omega(1-D)\psi L_\ell(D_\ell)dxdt \\
  &\le \int_0^T\int_\Omega(1-D)\psi L^*dxdt.
\end{align*}
It follows from dominated convergence that
\begin{align*}
  \int_0^T\int_\Omega(1-D)\psi L(D)dxdt 
  \le \int_0^T\int_\Omega(1-D)\psi L^*dxdt,
\end{align*}
or equivalently,
\begin{align*}
  \int_0^T\int_\Omega(L(D)-L^*)(1-D)\psi dxdt \le 0.
\end{align*}
Together with \eqref{2.Lstar} this shows that
\begin{align*}
	\int_0^T\int_\Omega(L(D)-L^*)(1-D)\psi dxdt = 0.
\end{align*}
Since $\psi$ is arbitrary, we find that $(L(D)-L^*)(1-D)=0$ a.e. It follows from $D<1$ a.e.\ that $L(D)=L^*$ a.e. Hence, by Lemma \ref{lem.LDH1}, $ L_k(D_k) \rightharpoonup L(D)$ weakly in $L^2(0,T;H^1(\Omega))$.
\end{proof}

The previous lemma implies that
\begin{align}\label{2.convh}
  &T_{k/(k+1)}(D_k)^{1/2}\na\widetilde{h}_k(D_k)
  = T_{k/(k+1)}(D_k)^{1/2}\widetilde{h}'_k(D_k)\na D_k
  = L_k'(D_k)\na D_k \\
  &\phantom{xx}= \na L_k(D_k)\rightharpoonup \na L(D) = -\na\log(1-D)
  = D\na h(D) \quad\mbox{weakly in }L^2(\Omega_T). \nonumber
\end{align}
The next step is the convergence of $(T_{k/(k+1)}(D_k)\na V_k)$ and the identification of the limit. 

Set $T_1(s)=\max\{0,\min\{1,s\}\}$. We deduce from
\begin{align*}
  |T_{k/(k+1)}(D_k)-T_1(D)| &= |T_{k/(k+1)}(D_k)-T_{k/(k+1)}(D)|
  + |T_{k/(k+1)}(D)-T_1(D)| \\
  &\le |D_k-D| + \frac{1}{k+1}
\end{align*}
and the strong convergence of $(D_k)$ that
\begin{align*}
  \int_0^T\int_\Omega|T_{k/(k+1)}(D_k)-T_1(D)|^2 dxdt
  \le 2\int_0^T\int_\Omega|D_k-D|^2 dxdt + \frac{C}{(k+1)^2}\to 0.
\end{align*}
The property $D<1$ a.e.\ implies that $T_1(D)=D$ and thus
$T_{k/(k+1)}(D_k)\to D$ strongly in $L^2(\Omega_T)$. We deduce from the convergence of $\na V_k\rightharpoonup\na V$ weakly in $L^2(\Omega_T)$ that $T_{k/(k+1)}(D_k)\na V_k\rightharpoonup D\na V$ weakly in $L^1(\Omega_T)$. We know from Lemma \ref{lem.est1} that $T_{k/(k+1)}(D_k)\na V_k$ $=T_{k/(k+1)}(D_k)^{1/2}\cdot T_{k/(k+1)}(D_k)^{1/2}\na V_K)$ is uniformly bounded in $L^\infty(0,T;L^2(\Omega))$. This yields the convergence
\begin{align*}
  T_{k/(k+1)}(D_k)\na V_k \rightharpoonup D\na V
  \quad\mbox{weakly in }L^2(\Omega_T).
\end{align*}
It follows from \eqref{2.convh} that
\begin{align*}
  T_{k/(k+1)}(D_k)^{1/2}\big(\na\widetilde{h}_k(D_k)
  + T_{k/(k+1)}(D_k)^{1/2}\na V_K\big)\rightharpoonup D\na h(D)+D\na V
\end{align*}
weakly in $L^2(\Omega_T)$.

Performing the limit $k\to\infty$ in the weak formulation \eqref{2.nk}--\eqref{2.Vk}, using formulation \eqref{2.fluxk} of the fluxes, leads to the weak formulation of \eqref{1.n}--\eqref{1.V}. The limit $(n,p,V)$ satisfies the Dirichlet boundary conditions \eqref{1.Dbc}  since $n_k=\bar{n}$, $p_k=\bar{p}$, and $V_k=\bar{V}$ on $\Gamma_D$. Finally, the limit $n_k\rightharpoonup n$ weakly in $W^{1,7/5}(0,T;W_D^{1,2q/(q+4)}(\Omega)')\hookrightarrow C^0([0,T];W_D^{1,2q/(q+4)}(\Omega)')$ and the property $n_k(\cdot,0)=n^I$ in the sense of $W_D^{1,2q/(q+4)}(\Omega)'$ show that $n$ satisfies the initial condition in \eqref{1.ic} in a weak sense. Similarly, $p$ and $D$ also satisfy the initial conditions. 


\subsection{Energy inequality}

We identify the weak limit of $\na\widetilde{h}_k(D_k)$, which is needed in the limit of the energy inequality \eqref{2.Ekineq}, leading to \eqref{1.Eineq}.

\begin{lemma}
Recall that $\widetilde{h}(s)=2\tanh^{-1}\sqrt{s}-2\tanh^{-1}\sqrt{1/2}$. It holds that
\begin{align*}
  \na\widetilde{h}_k(D_k)\rightharpoonup \na\widetilde{h}(D)
  \quad\mbox{weakly in }L^2(\Omega_T).
\end{align*}
\end{lemma}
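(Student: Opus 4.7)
The plan is to combine the uniform $L^2(\Omega_T)$ bound on $\nabla \widetilde{h}_k(D_k)$ already provided by Lemma~\ref{lem.est1} with an a.e.\ identification of its primitive, once a companion $L^2(\Omega_T)$ bound on $\widetilde{h}_k(D_k)$ itself is secured by comparison with $(L_k(D_k))$ controlled in Lemma~\ref{lem.LDH1}. The pair of bounds will then promote $\widetilde{h}_k(D_k)$ to weakly convergent in $L^2(0,T;H^1(\Omega))$, and gradients pass to the limit automatically.

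First I would pin down the pointwise limit. From \eqref{2.GHkdelta} with $\delta = 0$ and the identity $\sqrt{y}\,h'(y) = 1/(\sqrt{y}(1-y))$ on $(0,1)$, the functions $\widetilde{h}_k$ and $\widetilde{h}$ differ only by the constant $C_0 := 2\tanh^{-1}(1/\sqrt{2})$ on $[0, k/(k+1)]$, while $\widetilde{h}_k$ is extended affinely on $(k/(k+1), \infty)$. Therefore $\widetilde{h}_k \to \widetilde{h} + C_0$ locally uniformly on $[0,1)$. Combined with the a.e.\ convergence $D_k \to D$ (along the subsequence furnished by Aubin--Lions via Lemma~\ref{lem.est3}) and with $D < 1$ a.e.\ from Lemma~\ref{lem.Lstar}, this yields $\widetilde{h}_k(D_k) \to \widetilde{h}(D) + C_0$ a.e.\ in $\Omega_T$.

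Next I would produce a uniform $L^2(\Omega_T)$ bound on $\widetilde{h}_k(D_k)$ by a direct comparison with $L_k$: splitting at $y = 1/2$, the integral $\int_0^{1/2}$ is the bounded constant $C_0$; on $[1/2, k/(k+1)]$ the inequality $1/\sqrt{y} \le \sqrt{2}$ gives $\widetilde{h}_k'(y) \le \sqrt{2}\,L_k'(y)$; and on $(k/(k+1),\infty)$ the slopes are comparable since $\sqrt{(k+1)/k} \le \sqrt{2}$. Integrating, $\widetilde{h}_k(s) \le C + \sqrt{2}\,L_k(s)$ for all $s \ge 0$, so Lemma~\ref{lem.LDH1} transfers the uniform $L^2(\Omega_T)$ bound from $L_k(D_k)$ to $\widetilde{h}_k(D_k)$.

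With both bounds, $(\widetilde{h}_k(D_k))$ is bounded in $L^2(0,T;H^1(\Omega))$; extracting a weakly convergent subsequence with limit $\eta$ and invoking the pointwise identification above (together with Fatou to see that $\widetilde{h}(D) + C_0 \in L^2(\Omega_T)$) forces $\eta = \widetilde{h}(D) + C_0$. Passing the gradient to the weak limit then gives $\nabla \widetilde{h}_k(D_k) \rightharpoonup \nabla \widetilde{h}(D)$ weakly in $L^2(\Omega_T)$, which is the claim; uniqueness of the limit upgrades this to convergence of the entire sequence. The delicate point will be the $L^2$ control of $\widetilde{h}_k(D_k)$ near $D_k = 1$, which is precisely where the comparison with $L_k$ and the strict bound $D < 1$ a.e.\ from Lemma~\ref{lem.Lstar} do the essential work.
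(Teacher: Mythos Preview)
Your proposal is correct and takes a genuinely different route from the paper's proof. Both arguments first establish that $(\widetilde{h}_k(D_k))$ is bounded in $L^2(0,T;H^1(\Omega))$ and then identify the weak limit, but the mechanisms differ at each step. For the $L^2(\Omega_T)$ bound, the paper re-runs the nonlinear Poincar\'e--Wirtinger argument of Lemma~\ref{lem.LDH1} with $\widetilde{h}_k$ in place of $L_k$, whereas you obtain it by the pointwise comparison $\widetilde{h}_k \le C_0 + \sqrt{2}\,L_k$ and transfer the bound already proved for $L_k(D_k)$. For the identification of the limit, the paper replays the Minty-type monotonicity argument of Lemma~\ref{lem.Lstar} (introducing the perturbations $D_{k,\eta}$), while you exploit the a.e.\ convergence $D_k\to D$ together with $D<1$ a.e.\ (both already established) and the standard fact that a.e.\ convergence plus an $L^2$ bound pins down the weak limit. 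Your route is more economical since it recycles previously proved results rather than reproving their analogues; the paper's route has the mild advantage of showing that the Poincar\'e--Wirtinger/Minty machinery applies verbatim to $\widetilde{h}_k$, which makes the parallel with the treatment of $L_k$ explicit. Your observation about the additive constant $C_0=2\tanh^{-1}(1/\sqrt{2})$ is accurate (the integral defining $\widetilde{h}_k$ starts at $0$ while that for $\widetilde{h}$ starts at $\F_{-1}(0)=1/2$) and is of course harmless at the level of gradients.
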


\begin{proof}
The function $\widetilde{h}_k$ is written explicitly as
\begin{align*}
  \widetilde{h}_k(s) = \begin{cases}
  2\tanh^{-1}\sqrt{s}-2\tanh^{-1}\sqrt{1/2} 
  &\mbox{for }0\le s\le\frac{k}{k+1}, \\
  \sqrt{\frac{k+1}{k}}((k+1)s-k) 
  + 2\tanh^{-1}\sqrt{\frac{k}{k+1}}
  &\mbox{for }s>\frac{k}{k+1}.
  \end{cases}
\end{align*}
By Lemma \ref{lem.est1}, the sequence $(\na\widetilde{h}_k(D_k))$ is bounded in $L^2(\Omega_T)$. Proceeding as in the proof of Lemma \ref{lem.LDH1}, we can show that $(\widetilde{h}_k(D_k))$ is bounded in $L^2(0,T;H^1(\Omega))$. We deduce from
\begin{align*}
  |\na\widetilde{h}_k(D_{k,\eta})| \le
  \begin{cases}
  |\na\widetilde{h}_k(D_k)| &\mbox{for }D_{k,\eta}\le k/(k+1), \\
  \sqrt{2}|\na L_k(D_{k,\eta})| &\mbox{for }D_{k,\eta}>k/(k+1)
  \end{cases}
\end{align*}
a uniform bound for $\na\widetilde{h}_k(D_{k,\eta})$ in $L^2(\Omega_T)$. 
The bound for $(\widetilde{h}_k(D_k))$ implies a bound for $(\widetilde{h}_k(D_{k,\eta}))$ in $L^2(\Omega_T)$, and we have $\widetilde{h}(D_\eta)\in L^2(\Omega_T)$. Finally, the proof of Lemma \ref{lem.Lstar} shows that
\begin{align*}
  \widetilde{h}_k(D_k)\rightharpoonup 2\tanh^{-1}\sqrt{D}
  - 2\tanh^{-1}\sqrt{1/2}
  &\quad\mbox{weakly in }L^2(\Omega_T), \\
  \na\widetilde{h}_k(D_k)\rightharpoonup 2\na\tanh^{-1}\sqrt{D}
  &\quad\mbox{weakly in }L^2(\Omega_T),
\end{align*}
ending the proof.
\end{proof}


\section{Proof of Theorem \ref{thm.bound}}\label{sec.bound}

Since the proof is similar to that one of \cite[Theorem 2]{JuVe23}, we present only the key ideas, proceeding formally. First, we notice that Assumption (A5) with $r=3+\eps>3$ yields
\begin{align}\label{3.V}
  \|V\|_{L^\infty(0,T;W^{1,r}(\Omega))} 
  \le C\|n-p-D+A\|_{L^\infty(0,T;L^{3r/(r+3)}(\Omega))} + C
  \le C,
\end{align}
by Lemma \ref{lem.est1}, since $3r/(r+3)\le 5/3$ if $\eps>0$ is sufficiently small. To simplify, we assume that $\bar{n}=0$. The proof can be extended in a straightforward way to general boundary data $\bar{n}$, but at the price of more elaborate technical estimations (see, e.g., \cite[Section 3]{JuVe23}). We wish to use $n^q$ for $q\in\N$ as a test function in the weak formulation of \eqref{1.n}. To justify this step, we need to show that $n\in L^\infty(0,T;L^q(\Omega))$. This property is proved in \cite[Sec.~3]{JuVe23} in a similar context and therefore, we do not present the quite technical proof here. With this test function, we obtain
\begin{align*}
  \frac{1}{q+1}\frac{d}{dt}\int_\Omega n^{q+1}dx
  + q\int_\Omega ng'(n)n^{q-1}|\na n|^2 dx
  = q\int_\Omega n^q\na V\cdot\na n dx.
\end{align*}
We deduce from Lemma \ref{lem.Dg} that $ng'(n)\ge c(1+n^{2/3})$ and hence, using H\"older's inequality for the drift term, integrating over $(0,t)$, and possibly lowering the constant $c>0$,
\begin{align}\label{3.nq1}
  \frac{1}{q+1}&\int_\Omega(n(t)^{q+1}-n(0)^{q+1})dx
  + \frac{c}{q+1}\int_0^t\int_\Omega\big(|\na n^{(q+1)/2}|^2
  + |\na n^{(3q+5)/6}|^2\big)dxds \\
  &\le \frac{6q}{3q+5}\int_\Omega n^{(3q+1)/6}\na V\cdot\na  
  n^{(3q+5)/6} dx \nonumber \\
  &\le C\int_0^t\|n^{(3q+1)/6}\|_{L^{(6+2\eps)/(1+\eps)}(\Omega)}
  \|\na V\|_{L^{3+\eps}(\Omega)}
  \|\na n^{(3q+5)/6}\|_{L^2(\Omega)}ds \nonumber \\
  &\le C\int_0^t\|n^{(3q+1)/6}\|_{L^{(6+2\eps)/(1+\eps)}(\Omega)}
  \|\na n^{(3q+5)/6}\|_{L^2(\Omega)}ds, \nonumber 
\end{align}
where in the last step we have taken into account \eqref{3.V}. We apply the Gagliardo--Nirenberg inequality with $\theta=(15+3\eps)/(15+5\eps)<1$ (at this point we need the regularity in $W^{1,r}(\Omega)$ with $r=3+\eps>3$; $\eps=0$ would not be sufficient) and Young's inequality:
\begin{align*}
  \|n^{(3q+1)/6}\|_{L^{(6+2\eps)/(1+\eps)}(\Omega)}
  &= \|n^{(3q+5)/6}\|_{L^{(3q+1)(6+2\eps)/((3q+5)(1+\eps))}
  (\Omega)}^{(3q+1)/(3q+5)} \\
  &\le C + C\|n^{(3q+5)/6}\|_{L^{(3q+1)(6+2\eps)/((3q+5)(1+\eps))}
  (\Omega)} \\
  &\le C + C\|\na n^{(3q+5)/6}\|_{L^2(\Omega)}^{\theta}
  \|n^{(3q+5)/6}\|_{L^1(\Omega)}^{1-\theta}.
\end{align*}
We insert this expression into \eqref{3.nq1}, multiply by $q+1$, and use the Young inequality $(q+1)a^{1+\theta}b^{1-\theta}\le a^2 + C_\theta(q+1)^{2/(1-\theta)}b^2$, with $C_\theta>0$ depending solely on $\theta$:
\begin{align*}
  \|n(t)&\|_{L^{q+1}(\Omega)}^{q+1}
  + c\int_0^t\|\na n^{(3q+5)/6}\|_{L^2(\Omega)}^2 ds \\
  &\le \|n^I\|_{L^\infty(\Omega)}^{q+1}
  + C(q+1)\int_0^t\big(1+\|\na n^{(3q+5)/6}\|_{L^2(\Omega)}^{1+\theta}
  \|n^{(3q+5)/6}\|_{L^1(\Omega)}^{1-\theta}\big)ds \\
  &\le \|n^I\|_{L^\infty(\Omega)}^{q+1} + CT(q+1)
  + \frac{c}{2}\int_0^t\|\na n^{(3q+5)/6}\|_{L^2(\Omega)}^2 ds \\
  &\phantom{xx}
  + C(q+1)^{2/(1-\theta)}\int_0^t\|n^{(3q+5)/6}\|_{L^1(\Omega)}^2 ds.
\end{align*}
The third term on the right-hand side can be absorbed by the left-hand side. Then, taking the supremum over $(0,T)$ and observing that $\|n^{(3q+5)/6}\|_{L^1(\Omega)}^2 = \|n\|_{L^{(3q+5)/6}(\Omega)}^{(3q+5)/3}$:
\begin{align}\label{3.q}
  \|n\|_{L^\infty(0,T;L^{q+1}(\Omega))}^{q+1}
  \le \|n^I\|_{L^\infty(\Omega)}^{q+1} + C(q+1)
  + C(q+1)^{2/(1-\theta)}\|n\|_{L^\infty(0,T;L^{(3q+5)/6}(\Omega))
  }^{(3q+5)/3}.
\end{align}
We set $q_k:=q+1$ and $q_{k-1}:=(3q+5)/6$, which defines the recursion $q_{k-1}=(3q_k+2)/6$ or $q_k=2q_{k-1}-2/3$ with the explicit solution
\begin{align*}
  q_k = 2^k\frac{3q_0-2}{3} + \frac23, \quad k\in\N,
\end{align*}
where the initialization $q_0>2/3$ is arbitrary. Setting 
\begin{align*}
  b_k = \|n\|_{L^\infty(0,T;L^{q_k}(\Omega))}^{q_k}
  + \|n^I\|_{L^\infty(\Omega)}^{q_k} + 1,
\end{align*}
an elementary computation shows that \eqref{3.q} can be written as
\begin{align*}
  b_k \le C^k q_k^{2/(1-\theta)} b_{k-1}^2, \quad k\in\N.
\end{align*}
This inequality can be solved as in \cite[Sec.~3]{JuVe23}:
\begin{align*}
  \|n\|_{L^\infty(0,T;L^{q_k}(\Omega))}
  \le (C3^{2/(1-\theta)})^{(2^{k+1}-k-2)/q_k}\big(
  \|n\|_{L^\infty(0,T;L^{q_0}(\Omega))}^{q_0} 
  + \|n^I\|_{L^\infty(\Omega)}^{q_0}
  + 1\big)^{2^k/q_k}.
\end{align*}
It is shown in \cite[Sec.~3]{JuVe23} that the exponents on the right-hand side are bounded uniformly in $k$. Choosing $q_0\le 5/3$, it follows from Lemma \ref{lem.est1} that $N:=\|n\|_{L^\infty(0,T;L^{q_0}(\Omega))}$  is bounded. Then the limit $k\to\infty$ shows that $\|n\|_{L^\infty(0,T;L^\infty(\Omega))}\le C(n^I,N)$.

\begin{appendix}
\section{Fermi--Dirac integrals}\label{sec.app}

The Fermi--Dirac integral of order $j>-1$ is defined by 
\begin{align*}
  \F_j(z) = \frac{1}{\Gamma(j+1)}\int_0^\infty\frac{s^j}{1+e^{s-z}}ds,
  \quad z\in\R,
\end{align*}
where $\Gamma(j+1)=\int_0^\infty s^j e^{-s}ds$ is the Gamma function. This function has the property $\F_j'=\F_{j-1}$ for $j>0$. In the following, we write $A\sim B$ if there exist constants $C_1,C_2>0$ such that $A\le C_1B\le C_2 A$.

\begin{lemma}\label{lem.g1}
It holds for any $j>-1$ and $z\in\R$ that
\begin{align*}
  \F_j(z) \sim e^z\mathrm{1}_{\{z\le 0\}} 
  + (z^{j+1}+1)\mathrm{1}_{\{z>0\}}.
\end{align*}
\end{lemma}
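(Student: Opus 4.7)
The plan is to treat the two regimes $z\le 0$ and $z>0$ separately, since the cutoff $s=z$ in the integrand changes character as $z$ crosses zero.

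For $z\le 0$, the idea is to pull out the factor $e^z$ by rewriting
\begin{equation*}
  \F_j(z) = \frac{1}{\Gamma(j+1)}\int_0^\infty\frac{s^j e^{z-s}}{1+e^{z-s}}ds
  = \frac{e^z}{\Gamma(j+1)}\int_0^\infty\frac{s^j e^{-s}}{1+e^{z-s}}ds.
\end{equation*}
Since $z\le 0$ and $s\ge 0$, we have $0<e^{z-s}\le 1$, so $1\le 1+e^{z-s}\le 2$, and the remaining integral is two-sidedly controlled by $\int_0^\infty s^j e^{-s}ds = \Gamma(j+1)$. This gives $\frac12 e^z\le \F_j(z)\le e^z$ and thus $\F_j(z)\sim e^z$ on $(-\infty,0]$.

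For $z>0$, I would split $\F_j(z)=\Gamma(j+1)^{-1}(I_1+I_2)$ with $I_1=\int_0^z$ and $I_2=\int_z^\infty$. On $[0,z]$ one has $e^{s-z}\le 1$, so $1\le 1+e^{s-z}\le 2$, giving the two-sided bound $\tfrac{1}{2(j+1)}z^{j+1}\le I_1\le \tfrac{1}{j+1}z^{j+1}$. For $I_2$, substitute $u=s-z$ to obtain $I_2=\int_0^\infty (u+z)^j/(1+e^u)du$, which needs to be bounded by $C(z^{j+1}+1)$. Combined with the $I_1$ estimate, this yields the upper bound of the claim; the lower bound follows from $I_1$ alone, once one also checks (by monotonicity and continuity of $\F_j$) that $\F_j(z)$ is bounded below by a positive constant on any fixed interval $[0,1]$, which handles the regime where $1$ dominates $z^{j+1}$.

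The main obstacle will be bounding $I_2$ by $C(z^{j+1}+1)$ uniformly in $z>0$, because the behavior of $(u+z)^j$ in $u$ depends on the sign of $j$. For $j\ge 0$, the elementary convexity-type inequality $(u+z)^j\le 2^{\max(j-1,0)}(u^j+z^j)$ suffices, reducing matters to $\int_0^\infty u^j/(1+e^u)du<\infty$ (a finite constant times $\Gamma(j+1)$) plus a term $Cz^j\le C(z^{j+1}+1)$. For $-1<j<0$ the monotonicity $(u+z)^j\le \min(u^j,z^j)$ gives $I_2\le \int_0^\infty u^j/(1+e^u)du<\infty$, so $I_2\le C$, which is absorbed in the $+1$. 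Assembling the two regimes and noting the equivalence of $e^z$ with $z^{j+1}+1$ on any bounded interval around zero yields the stated two-sided asymptotics.
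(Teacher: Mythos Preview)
Your proposal is correct and follows essentially the same route as the paper: both treat $z\le 0$ by factoring out $e^z$, and for $z>0$ split into $I_1=\int_0^z$ and $I_2=\int_z^\infty$ with the substitution $u=s-z$ in $I_2$, then case-split on the sign of $j$. Two minor differences are worth noting. For the upper bound on $I_2$ with $j\ge 0$, the paper splits $\int_0^\infty$ once more at $u=z$ and uses $(u+z)^j\le (2z)^j$ on $[0,z]$ and $(u+z)^j\le(2u)^j$ on $[z,\infty)$; your single inequality $(u+z)^j\le 2^{\max(j-1,0)}(u^j+z^j)$ achieves the same end more directly. For the lower bound when $-1<j<0$, the paper works harder, refining $I_1$ via a further split $\int_0^1+\int_1^z$ to extract an additive constant; your argument---use $I_1\gtrsim z^{j+1}$ for $z\ge 1$ and the positive minimum $\F_j(0)$ on $[0,1]$ via monotonicity---is cleaner and suffices. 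Your closing remark about the equivalence of $e^z$ and $z^{j+1}+1$ near zero is unnecessary: the two regimes are disjoint, so one simply takes the worst constants from each side.
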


\begin{proof}
Let first $z\le 0$. Then $e^{s-z} < 1+e^{s-z}\le 2e^{s-z}$ and  
\begin{align*}
  \F_j(z) \ge\frac{1}{\Gamma(j+1)}\int_0^\infty\frac{s^j}{2e^{s-z}}ds
  =\frac{e^z}{2}, \quad
  \F_j(z) \le\frac{1}{\Gamma(j+1)}\int_0^\infty\frac{s^j}{e^{s-z}}ds
  = e ^z.
\end{align*}
This shows that $\F_j(z)\sim e^z$ for $z\le 0$. For $z>0$, we write
$\F_j(z)=I_1+I_2$, where
\begin{align*}
  I_1 = \frac{1}{\Gamma(j+1)}\int_0^z\frac{s^j}{1+e^{s-z}}ds, \quad
  I_2 = \frac{1}{\Gamma(j+1)}\int_z^\infty\frac{s^j}{1+e^{s-z}}ds.
\end{align*}
We infer from $s\le z$ and hence $e^{s-z}\le 1$ that
\begin{align*}
  I_1 &\ge \frac{1}{2\Gamma(j+1)}\int_0^z s^j ds
  = \frac{z^{j+1}}{2(j+1)\Gamma(j+1)} = \frac{z^{j+1}}{2\Gamma(j+2)}, \\
  I_1 &\le \frac{1}{\Gamma(j+1)}\int_0^z s^j ds
  = \frac{z^{j+1}}{\Gamma(j+2)},
\end{align*}
To estimate $I_2$, we assume first that $j\ge 0$ and substitute $y=s-z\ge 0$:
\begin{align*}
  I_2 &= \frac{1}{\Gamma(j+1)}\int_0^\infty
  \frac{(y+z)^{j}}{1+e^y}dy
  \ge \frac{1}{\Gamma(j+1)}\int_0^\infty\frac{y^j}{2e^y}dy = \frac12, \\
  I_2 &\le \frac{1}{\Gamma(j+1)}\bigg(\int_0^z+\int_z^\infty\bigg)
  \frac{(y+z)^{j}}{1+e^y}dy \\
  &\le \frac{1}{\Gamma(j+1)}\int_0^z\frac{(2z)^j}{1+e^y}dy
  + \frac{1}{\Gamma(j+1)}\int_z^\infty\frac{(2y)^j}{1+e^y}dy \\
  &\le \frac{(2z)^j}{\Gamma(j+1)}\int_0^z e^{-y}dy
  + \frac{2^j}{\Gamma(j+1)}\int_z^\infty\frac{y^j}{e^y}dy
  \le \frac{(2z)^j}{\Gamma(j+1)} + 2^j.
\end{align*}
We conclude that 
\begin{align*}
  \F_j(z) \le \frac{z^{j+1}}{\Gamma(j+2)} 
  + \frac{(2z)^j}{\Gamma(j+1)} + 2^j, \quad
  \F_j(z) \ge \frac{z^{j+1}}{2\Gamma(j+2)} + \frac12, 
\end{align*}
proving that $\F_j(z)\sim z^{j+1}+1$ for $z>0$ and $j\ge 0$. 

Finally, let $-1<j<0$. Then, arguing as before, $I_1\le z^{j+1}/\Gamma(j+2)$,
\begin{align*}
  I_2 = \frac{1}{\Gamma(j+1)}\int_0^\infty
  \frac{(y+z)^{j}}{1+e^y}dy 
  \le \frac{1}{\Gamma(j+1)}\int_0^\infty\frac{y^{j}}{e^y}dy = 1,
\end{align*}
and hence $\F_j(z)\le z^{j+1}/\Gamma(j+2)+1$. The estimate from below requires a more careful computation. As the function $\F_j$ is continuous and strictly increasing, its minimum on $[0,1]$ equals $\F_j(0)$. Thus, for $z\in[0,1]$,
\begin{align*}
  \F_j(z) \ge \F_j(0) = \frac{1}{\Gamma(j+1)}\int_0^\infty  \frac{s^j}{1+e^s}ds > \frac{1}{\Gamma(j+1)}\int_0^\infty  \frac{s^j}{2e^s}ds = \frac12.
\end{align*}
Let $z>1$. We always have $I_2\ge 0$. The inequality $s^j\ge 1\ge z^{-1}$ for $1\le s\le z$ implies that $s^j\ge \frac12(s^j+z^{-1})$ and hence,
\begin{align*}
  I_1 &= \frac{1}{\Gamma(j+1)}\bigg(\int_0^1+\int_1^z\bigg)
  \frac{s^j}{1+e^{s-z}}ds \\
  &\ge \frac{1}{2\Gamma(j+1)}\int_0^1 s^j ds
  + \frac{1}{2\Gamma(j+1)}\int_1^z\frac12(s^j + z^{-1})ds \\
  &= \frac{1}{4\Gamma(j+1)}\int_0^z s^j ds 
  + \frac{1}{4\Gamma(j+1)}\int_0^1 s^j ds 
  + \frac{1}{4\Gamma(j+1)}\int_1^z z^{-1}ds \\
  &= \frac{z^{j+1}}{4\Gamma(j+2)} + \frac{1}{4\Gamma(j+1)}
  \bigg(\frac{1}{j+1} + 1 - \frac{1}{z}\bigg)
  \ge \frac{z^{j+1}}{4\Gamma(j+2)} + \frac{1}{4\Gamma(j+2)}.
\end{align*}
Combining the estimates for $z\in[0,1]$ and $z>1$ yields $\F_j(z)\ge C(z^{j+1}+1)$ for all $z\ge 0$. This is the desired lower bound, which concludes the proof.
\end{proof}

\begin{corollary}\label{coro.g}
Let $j>0$. Then for $z\in\R$,
\begin{align*}
  \F_j'(z) = \F_{j-1}(z)\sim \F_j(z)\mathrm{1}_{\{z\le 0\}}
  + \F_j(z)^{j/(j+1)}\mathrm{1}_{\{z>0\}}.
\end{align*}
\end{corollary}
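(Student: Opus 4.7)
The plan is to derive the corollary as a direct consequence of Lemma \ref{lem.g1}, applied to both $\F_j$ and $\F_{j-1}$. Since $j>0$, we have $j-1>-1$, so Lemma \ref{lem.g1} is applicable to $\F_{j-1}$ as well. The identity $\F_j'=\F_{j-1}$ is the standard differentiation formula for Fermi--Dirac integrals recorded at the beginning of the appendix (valid since $j>0$), so it suffices to prove the equivalence for $\F_{j-1}(z)$.

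First I would treat the case $z\le 0$. Here Lemma \ref{lem.g1} gives both $\F_j(z)\sim e^z$ and $\F_{j-1}(z)\sim e^z$, so immediately $\F_{j-1}(z)\sim \F_j(z)$, which matches the first term on the right-hand side of the claimed equivalence.

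Next I would treat the case $z>0$. Lemma \ref{lem.g1} yields $\F_j(z)\sim z^{j+1}+1$ and $\F_{j-1}(z)\sim z^{j}+1$, so the task reduces to showing the purely elementary equivalence
\begin{align*}
  z^{j}+1 \sim \bigl(z^{j+1}+1\bigr)^{j/(j+1)} \quad\text{for } z>0.
\end{align*}
This follows from the general fact that for $a,b\ge 0$ and $\theta\in(0,1)$ one has
\begin{align*}
  \max(a,b)^\theta \le (a+b)^\theta \le 2^\theta \max(a,b)^\theta,
\end{align*}
hence $(a+b)^\theta\sim a^\theta+b^\theta$. Applying this with $a=z^{j+1}$, $b=1$, and $\theta=j/(j+1)\in(0,1)$ gives $(z^{j+1}+1)^{j/(j+1)}\sim z^{j}+1$, as required.

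Combining the two cases yields the stated two-sided estimate. There is no real obstacle here; the only mild care needed is choosing the correct form of the elementary $(a+b)^\theta$ estimate, and verifying that the constants produced by Lemma \ref{lem.g1} for $\F_j$ and $\F_{j-1}$ can be absorbed into the generic constants implicit in the relation $\sim$.
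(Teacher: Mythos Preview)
Your proof is correct and follows essentially the same approach as the paper: both apply Lemma \ref{lem.g1} to $\F_j$ and $\F_{j-1}$ separately for $z\le 0$ and $z>0$. The only difference is that the paper writes the $z>0$ step tersely as $\F_{j-1}(z)\sim z^j=(z^{j+1})^{j/(j+1)}\sim\F_j(z)^{j/(j+1)}$, tacitly dropping the ``$+1$'' from Lemma \ref{lem.g1}, whereas you handle the constant term explicitly via the elementary equivalence $(z^{j+1}+1)^{j/(j+1)}\sim z^j+1$; your version is slightly more careful but not a different route.
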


\begin{proof}
If $z\le 0$ then, by Lemma \ref{lem.g1}, $\F_{j-1}(z)\sim e^z \sim \F_j(z)$. Furthermore, by the same lemma, if $z>0$, we have $\F_{j-1}(z)\sim z^j = (z^{j+1})^{j/(j+1)} \sim \F_j(z)^{j/(j+1)}$.
\end{proof}

\begin{lemma}\label{lem.Dg}
Recall that $g=\F_{1/2}^{-1}$. It holds for $z>0$ that
\begin{align*}
  g'(z)\sim z^{-1} + z^{-1/3}.
\end{align*}
\end{lemma}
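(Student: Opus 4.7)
The plan is to compute $g'$ by the inverse function theorem and then apply Corollary \ref{coro.g} with $j=1/2$. Since $g=\F_{1/2}^{-1}$ and $\F_{1/2}$ is strictly increasing and smooth with $\F_{1/2}'=\F_{-1/2}>0$, the inverse function theorem gives
\begin{equation*}
  g'(z) = \frac{1}{\F_{-1/2}(g(z))}\quad\mbox{for }z>0.
\end{equation*}

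By Corollary \ref{coro.g} applied with $j=1/2$, we have $\F_{-1/2}(y)\sim \F_{1/2}(y)$ for $y\le 0$ and $\F_{-1/2}(y)\sim \F_{1/2}(y)^{1/3}$ for $y>0$. Substituting $y=g(z)$ and using that $\F_{1/2}(g(z))=z$ together with the equivalence $g(z)\le 0\iff z\le \F_{1/2}(0)$, this yields
\begin{equation*}
  g'(z) \sim z^{-1}\mathrm{1}_{\{z\le\F_{1/2}(0)\}}
  + z^{-1/3}\mathrm{1}_{\{z>\F_{1/2}(0)\}},
\end{equation*}
which is exactly the asymptotic relation \eqref{1.Dg} claimed in the introduction.

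The final step is to recast this as $g'(z)\sim z^{-1}+z^{-1/3}$. Set $c_0:=\F_{1/2}(0)>0$. On $(0,c_0]$, the ratio $z^{-1/3}/z^{-1}=z^{2/3}$ is bounded by $c_0^{2/3}$, so $z^{-1}\le z^{-1}+z^{-1/3}\le (1+c_0^{2/3})z^{-1}$. On $(c_0,\infty)$, the ratio $z^{-1}/z^{-1/3}=z^{-2/3}$ is bounded by $c_0^{-2/3}$, so $z^{-1/3}\le z^{-1}+z^{-1/3}\le (1+c_0^{-2/3})z^{-1/3}$. Combining these bounded equivalences on the two regions with the previous display produces the stated result.

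The argument is essentially a routine application of the inverse function theorem combined with Corollary \ref{coro.g}; the only mild subtlety is verifying that replacing the piecewise characterization by the sum $z^{-1}+z^{-1/3}$ preserves the $\sim$ relation, which follows from the boundedness check above at the threshold $c_0=\F_{1/2}(0)$.
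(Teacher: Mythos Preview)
Your proof is correct and follows essentially the same route as the paper: apply the inverse function theorem to write $g'(z)=1/\F_{-1/2}(g(z))$, invoke Corollary~\ref{coro.g} with $j=1/2$ to obtain the piecewise equivalence $g'(z)\sim z^{-1}\mathrm{1}_{\{z\le\F_{1/2}(0)\}}+z^{-1/3}\mathrm{1}_{\{z>\F_{1/2}(0)\}}$, and then convert this to $z^{-1}+z^{-1/3}$ by bounding the ratio of the two powers on each region. Your handling of the last step is in fact slightly cleaner than the paper's, which splits the region $z>\F_{1/2}(0)$ further into $z\ge 1$ and $\F_{1/2}(0)<z<1$ (using $\F_{1/2}(0)<1$), whereas your direct ratio bound $z^{-2/3}\le c_0^{-2/3}$ on $(c_0,\infty)$ works uniformly without that extra case distinction.
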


\begin{proof}
Let $z>0$ and $y=\F_{1/2}^{-1}(z)$. Then, by Corollary \ref{coro.g},
\begin{align*}
  g'(z) &= \frac{1}{\F'_{1/2}(y)} = \frac{1}{\F_{-1/2}(y)} \sim 
  \frac{1}{\F_{1/2}(y)\mathrm{1}_{\{y\le 0\}}
  + \F_{1/2}(y)^{1/3}\mathrm{1}_{\{y>0\}}} \\
  &= \frac{1}{z\mathrm{1}_{\{y\le 0\}} + z^{1/3}\mathrm{1}_{\{y>0\}}}
  = z^{-1}\mathrm{1}_{\{y\le 0\}} + z^{-1/3}\mathrm{1}_{\{y>0\}},
\end{align*}
which shows that $g'(z)\le C(z^{-1}+z^{-1/3})$ for $z>0$. For the lower bound, we distinguish the cases $0<z\le\F_{1/2}(0)$ (or $y\le 0$) and $z>\F_{1/2}(0)$ (or $y>0$). The case $z\le\F_{1/2}(0)$ implies that $z<1$ (since $1/2<\F_{1/2}(0)<1$). Then the inequality $z^{-1}>z^{-1/3}$ yields $z^{-1}\mathrm{1}_{\{y\le 0\}}>(z^{-1}+z^{-1/3})/2$ and $g'(z)\le C(z^{-1}+z^{-1/3})$.  

Let $z>\F_{1/2}(0)$. If $z\ge 1$, we have $z^{-1/3}\mathrm{1}_{\{y>0\}}\ge z^{-1}\mathrm{1}_{\{y>0\}}$ and $g'(z)\ge C(z^{-1}+z^{-1/3})$. If $\F_{1/2}(0)<z<1$, we observe that $\F_{1/2}(0)<z$ yields $\F_{1/2}(0)z^{-1}<1<z^{-1/3}$, showing that
\begin{align*}
  z^{-1/3}\mathrm{1}_{\{y>0\}} 
  > \frac{\F_{1/2}(0)}{2}(z^{-1}+z^{-1/3})\mathrm{1}_{\{y>0\}}
\end{align*}
and again $g'(z)\ge C(z^{-1}+z^{-1/3})$. Summarizing these estimates, we conclude the proof.
\end{proof}

Our final result is used in the proof of Lemma \ref{lem.limTnag}. 

\begin{lemma}\label{lem.DzDg}
There exists a constant $C>0$ such that for $z>0$,
\begin{align*}
  \frac{d}{dz}(zg'(z)) \le C\big(\mathrm{1}_{\{z<\F_{1/2}(0)\}}
  + z^{-1/3}\mathrm{1}_{\{z\ge \F_{1/2}(0)\}}\big).
\end{align*}
\end{lemma}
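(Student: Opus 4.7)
The plan is to write everything in terms of $y:=g(z)$, so $z=\F_{1/2}(y)$ and, since $\F_{1/2}'=\F_{-1/2}$, $g'(z)=1/\F_{-1/2}(y)$. Differentiating the identity $zg'(z)=\F_{1/2}(y)/\F_{-1/2}(y)$ with the chain rule gives
\begin{align*}
  \frac{d}{dz}(zg'(z)) = \frac{\F_{-1/2}(y)^2-\F_{1/2}(y)\F_{-1/2}'(y)}{\F_{-1/2}(y)^3}.
\end{align*}
I would then split into the two regimes $y>0$ (i.e., $z\ge\F_{1/2}(0)$) and $y\le 0$ (i.e., $z<\F_{1/2}(0)$), which correspond exactly to the two indicator functions on the right-hand side of the claim.

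For $y>0$ the positivity of each of $\F_{1/2}$, $\F_{-1/2}$, $\F_{-1/2}'$ lets me discard the subtracted term in the numerator, leaving the upper bound $1/\F_{-1/2}(y)$. Corollary~\ref{coro.g} applied with $j=1/2$ yields $\F_{-1/2}(y)\sim \F_{1/2}(y)^{1/3}=z^{1/3}$ on $\{y>0\}$, so $1/\F_{-1/2}(y)\le Cz^{-1/3}$, which is exactly what is needed on this half of the partition.

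The regime $y\le 0$ is the delicate one and is where I expect the main obstacle. By Lemma~\ref{lem.g1}, both $\F_{-1/2}(y)^2$ and $\F_{1/2}(y)\F_{-1/2}'(y)$ behave like $e^{2y}$ while $\F_{-1/2}(y)^3\sim e^{3y}$, so without exploiting a cancellation in the numerator the ratio would blow up like $e^{-y}$ as $y\to-\infty$. To extract the cancellation I would use the absolutely convergent series
\begin{align*}
  \F_j(y)=\sum_{k\ge 1}\frac{(-1)^{k-1}e^{ky}}{k^{j+1}}, \qquad \F_{-1/2}'(y)=\sum_{k\ge 1}(-1)^{k-1}\sqrt{k}\,e^{ky}, \qquad y\le 0,
\end{align*}
multiply out, collect by the diagonal $m=j+k$, and symmetrise in $(j,k)$. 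The diagonal $m=2$ cancels and a short computation gives
\begin{align*}
  \F_{-1/2}(y)^2-\F_{1/2}(y)\F_{-1/2}'(y)
  = -\tfrac12\sum_{m\ge 3}(-1)^m e^{my}\sum_{\substack{j+k=m\\ j,k\ge 1}}\frac{(j-k)^2}{(jk)^{3/2}}.
\end{align*}
Splitting the inner sum at $j=m/2$ produces the elementary bound $|c_m|\le C\sqrt m$ on the $m$-th coefficient, so for $y\le-\log 2$ the whole series is dominated by its $m=3$ term, i.e., $|F(y)|\le Ce^{3y}$. Combined with $\F_{-1/2}(y)^3\ge c e^{3y}$ from Lemma~\ref{lem.g1}, this gives the constant bound on $(-\infty,-\log 2]$; on the compact interval $[-\log 2,0]$ the numerator is continuous and the denominator is bounded below, so boundedness follows.

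The main obstacle is precisely that $y\le 0$ cancellation: the two parts of the numerator each have a leading $e^{2y}$ term, and their difference is of strictly higher order $e^{3y}$. The series rearrangement above is the most transparent way I see to make this explicit, and once the coefficient estimate $|c_m|\le C\sqrt m$ is in hand the remaining steps are routine.
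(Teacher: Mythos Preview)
Your argument is correct and in fact cleaner than the paper's in the regime $y>0$. The paper also starts from the identity
\[
\frac{d}{dz}(zg'(z))=\frac{\F_{1/2}'(y)^2-\F_{1/2}(y)\F_{1/2}''(y)}{\F_{1/2}'(y)^3},
\]
but then splits the work differently: it bounds $g'(z)\le Cz^{-1/3}$ directly and spends most of the effort showing $-\F_{1/2}''(y)\le Cy^{-1/2}$ via a two-fold integration by parts and a careful $\eps$--splitting of the integral. Your shortcut---simply discarding the subtracted term because $\F_{-1/2}'(y)=\frac{1}{\sqrt\pi}\int_0^\infty s^{-1/2}e^{s-y}(1+e^{s-y})^{-2}\,ds>0$---bypasses all of that; note that this positivity is obvious from the $\F_{-1/2}$ integral but is hidden in the paper's formula for $\F_{1/2}''$, whose integrand changes sign. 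For $y\le 0$, the paper exhibits the cancellation by rewriting the numerator as a double integral, manipulating it to $N(y)\le 2e^{y}\F_{1/2}(y)\F_{1/2}'(y)\le Ce^{3y}$, whereas you extract the same $e^{3y}$ decay via the polylogarithm series $\F_j(y)=\sum_{k\ge1}(-1)^{k-1}k^{-j-1}e^{ky}$ and symmetrisation. Both arguments are of comparable length here; the paper's integral manipulation is self-contained, while your series approach makes the structure of the cancellation (the $m=2$ diagonal vanishing) more transparent and would generalise more readily to other indices $j$.
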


\begin{proof}
Let $z<\F_{1/2}(0)$ and set $y=\F_{1/2}^{-1}(z)<0$. Then
\begin{align}\label{2.dzg}
  \frac{d}{dz}(zg'(z)) = g'(z) + zg''(z)
  = \frac{\F'_{1/2}(y)^2 - \F_{1/2}(y)\F_{1/2}''(y)}{\F'_{1/2}(y)^3}.
\end{align}
Let $N(y):=\F'_{1/2}(y)^2 - \F_{1/2}(y)\F_{1/2}''(y)$. We compute the derivatives 
\begin{align*}
  \F'_{1/2}(z) = \frac{2}{\sqrt{\pi}}\int_0^\infty
  \frac{\sqrt{s} e^{s-z}}{(1+e^{s-z})^2}ds, \quad
  \F''_{1/2}(z) = \frac{2}{\sqrt{\pi}}\int_0^\infty
  \frac{\sqrt{s}e^{s-z}(e^{s-z}-1)}{(1+e^{s-z})^3}ds,
\end{align*}
yielding
\begin{align*}
  N(y) &= \frac{4}{\pi}\int_0^\infty\int_0^\infty
  \frac{\sqrt{st}e^{s-y}e^{t-y}dsdt}{(1+e^{s-y})^2(1+e^{t-y})^2}
  - \frac{4}{\pi}\int_0^\infty\int_0^\infty
  \frac{\sqrt{st}e^{s-y}(e^{s-y}-1)dsdt}{(1+e^{s-y})^3(1+e^{t-y})} \\
  &= \frac{4}{\pi}\int_0^\infty\int_0^\infty\sqrt{st} e^{s-y}
  \frac{2e^{t-y}-e^{s-y}+1}{(1+e^{s-y})^3(1+e^{t-y})^2}dsdt \\
  &= \frac{4}{\pi}\int_0^\infty\int_0^\infty\sqrt{st}e^{s-y}
  \bigg(\frac{2}{(1+e^{s-y})^3(1+e^{t-y})}
  - \frac{1}{(1+e^{s-y})^2(1+e^{t-y})^2}\bigg)dsdt \\
  &= \frac{2}{\sqrt{\pi}}\F_{1/2}(y)\int_0^\infty 
  \frac{2\sqrt{s} e^{s-y}}{(1+e^{s-y})^3}ds
  - \frac{2}{\sqrt{\pi}}\F_{1/2}'(y)\int_0^\infty
  \frac{\sqrt{t}}{(1+e^{t-y})^2}dt \\
  &\le \frac{2}{\sqrt{\pi}}\F_{1/2}(y)\int_0^\infty 
  \frac{2\sqrt{s} e^{s-y}}{(1+e^{s-y})^3}ds.
\end{align*}
We estimate the remaining integral:
\begin{align*}
  \frac{2}{\sqrt{\pi}}\int_0^\infty
  \frac{\sqrt{s} e^{s-y}}{(1+e^{s-y})^3}ds
  \le \frac{2}{\sqrt{\pi}}e^{y}\int_0^\infty
  \frac{\sqrt{s} e^{s-y}}{(1+e^{s-y})^2}ds
  = e^y\F'_{1/2}(y).
\end{align*}
Therefore, using Lemma \ref{lem.g1} for $y=\F_{1/2}^{-1}(z)<0$,
\begin{align*}
  N(y) \le 2e^y\F_{1/2}(y)\F'_{1/2}(y) \le C_1e^{3y}, \quad
  \F'_{1/2}(y)^3 = \F_{-1/2}(y)^3\ge C_2e^{3y}.
\end{align*}
We conclude from \eqref{2.dzg} for $z<\F_{1/2}(0)$ that
\begin{align*}
  \frac{d}{dz}(zg'(z)) = \frac{N(y)}{\F_{1/2}'(y)^3} 
  \le \frac{C_1}{C_2}.  
\end{align*}

Next, let $z\ge \F_{1/2}(0)$ (or $y\ge 0$). We know from the proof of Lemma \ref{lem.Dg} that in this case $g'(z)\le Cz^{-1/3}$. According to \eqref{2.dzg}, it remains to show that
\begin{align*}
  zg''(z) = -\frac{\F_{1/2}(y)\F_{1/2}''(y)}{\F_{1/2}'(y)^3}
  \le Cz^{-1/3}\quad\mbox{for }z\ge \F_{1/2}(0).
\end{align*}
To this end, we fix some $y_0>\F_{1/2}(0)$ and choose $0<\eps<y_0$. For $y>y_0$, we split the integral $-\F''_{1/2}$ into two parts, $-\F''_{1/2}(y)=I_3+I_4$, where
\begin{align*}
  I_3 = -\frac{2}{\sqrt{\pi}}\int_0^\eps
  \frac{\sqrt{s}e^{s-y}(e^{s-y}-1)}{(1+e^{s-y})^3}ds, \quad 
  I_4 = -\frac{2}{\sqrt{\pi}}\int_\eps^\infty
  \frac{\sqrt{s}e^{s-y}(e^{s-y}-1)}{(1+e^{s-y})^3}ds.
\end{align*}
The estimate of $I_3$ is straightforward:
\begin{align*}
  I_3 \le \frac{2}{\sqrt{\pi}}e^{\eps-y}\int_0^\eps\sqrt{s}ds
  = \frac{4}{3\sqrt{\pi}}e^{\eps-y}\eps^{3/2}.
\end{align*}
We integrate by parts in $I_4$ twice and split the resulting integral into two parts:
\begin{align*}
  I_4 &= \frac{2}{\sqrt{\pi}}\bigg(
  \frac{\sqrt{s}e^{s-y}}{(1+e^{s-y})^2}\bigg|_\eps^\infty
  + \int_\eps^\infty\frac{s^{-1/2} e^{s-y}}{2(1+e^{s-y})^2}ds\bigg) \\
  &= \frac{2}{\sqrt{\pi}}\bigg\{
  -\frac{\sqrt{\eps}e^{\eps-y}}{(1+e^{\eps-y})^2}
  + \bigg(\frac{s^{-1/2}}{2(1+e^{s-y})}\bigg|_\eps^\infty
  + \int_\eps^\infty\frac{s^{-3/2}}{4(1+e^{s-y})}ds\bigg)\bigg\} \\
  &= -\frac{2}{\sqrt{\pi}}\bigg(
  \frac{\sqrt{\eps}e^{\eps-y}}{(1+e^{\eps-y})^2}
  + \frac{\eps^{-1/2}}{2(1+e^{s-y})}\bigg)
  + \frac{2}{\sqrt{\pi}}(I_5 + I_6),
\end{align*}
where
\begin{align*}
  I_5 &= \int_\eps^y\frac{s^{-3/2}}{4(1+e^{s-y})}ds
  \le \frac{1}{4(1+e^{\eps-y})}\int_\eps^y s^{-3/2}ds
  = \frac{\eps^{-1/2}-y^{-1/2}}{2(1+e^{\eps-y})}, \\
  I_6 &= \int_y^\infty\frac{s^{-3/2}}{4(1+e^{s-y})}ds
  \le \frac12\int_y^\infty s^{-3/2}ds = y^{-1/2}.
\end{align*}
Summarizing these estimates, we observe that the contributions that are singular for $\eps\to 0$ cancel, and we end up with
\begin{align*}
  -\F_{1/2}''(y) &\le\frac{2}{\sqrt{\pi}}\bigg\{
  \frac23 e^{\eps-y}\eps^{3/2} 
  - \bigg(\frac{\sqrt{\eps}e^{\eps-y}}{(1+e^{\eps-y})^2}
  + \frac{\eps^{-1/2}}{2(1+e^{s-y})}\bigg)
  + \frac{\eps^{-1/2}-y^{-1/2}}{2(1+e^{\eps-y})}
  + y^{-1/2}\bigg\} \\
  &= \frac{2}{\sqrt{\pi}}\bigg(
  \frac23 e^{\eps-y}\eps^{3/2}
  - \frac{\sqrt{\eps}e^{\eps-y}}{(1+e^{\eps-y})^2}
  + \frac{1+2e^{\eps-y}}{2(1+e^{\eps-y})}y^{-1/2}\bigg).
\end{align*}
Since $\eps>0$ is arbitrary, we can pass to the limit $\eps\to 0$ to find that for all $y\ge 0$,
\begin{align*}
  -\F''_{1/2}(y) \le \frac{2}{\sqrt{\pi}}
  \frac{1+2e^{-y}}{2(1+e^{-y})}y^{-1/2} \le C y^{-1/2}.
\end{align*}
We use the previous inequality to estimate the nominator and Corollary \ref{coro.g} to estimate the denominator in \eqref{2.dzg}:
\begin{align*}
  zg''(z) = -\frac{\F_{1/2}(y)\F''_{1/2}(y)}{\F_{1/2}'(y)^3}
  \le C\frac{\F_{1/2}(y)y^{-1/2}}{\F_{1/2}(y)} 
  \le \frac{C}{y^{1/2}} \quad\mbox{for }y>y_0.
\end{align*}
For $0\le y\le y_0$, the expression $zg''(z)$ is bounded since $\F_{1/2}$ and its derivatives are bounded in $[0,y_0]$. In case $y>y_0$, we wish to have an upper bound in terms of $z$. For this, we notice that, by Lemma \ref{lem.g1}, $z=\F_{1/2}(y)\le C_1(y^{3/2}+1)$. It follows from $y>y_0$ that $z\le C_1(1+y_0^{-3/2})y^{3/2}=:C_2 y^{3/2}$ and 
\begin{align*}
  zg''(z) \le Cy^{-1/2} \le CC_2^{1/3}z^{-1/3}.
\end{align*}
This concludes the proof.
\end{proof}


\section{A nonlinear Poincar\'e--Wirtinger-type lemma}\label{sec.poincare}

We show a nonlinear version of the Poincar\'e--Wirtinger inequality.

\begin{lemma}\label{lem.poincare}
Let $\Omega\subset\R^d$ ($d\ge 1$) be a bounded domain, let $f:[0,b)\to[0,\infty)$ with $b\in\R\cup\{+\infty\}$, be a strictly increasing function, let $u\in L^1(\Omega)$ satisfy $f(u)\in H^1(\Omega)$ and $u_\Omega:=\m(\Omega)^{-1}\int_\Omega udx<b$. Then for any $\widehat{u}\in(u_\Omega,b)$ one has
\begin{align*}
  \|f(u)\|_{L^2(\Omega)}^2\le 2m(\Omega)f(\widehat{u})^2 
  + 4C_P\bigg(1+\frac{\widehat{u}}{\widehat{u}-u_\Omega}\bigg)
  \|\na f(u)\|_{L^2(\Omega)}^2,
\end{align*}
where $C_P>0$ is the square of the constant of the Poincar\'e--Wirtinger inequality.
\end{lemma}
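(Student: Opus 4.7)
\medskip

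\noindent\textbf{Proof plan.} The strategy is a standard Poincaré--Wirtinger argument, but with the mean taken over the sublevel set $A=\{u\le\widehat{u}\}$ rather than over all of $\Omega$, and with the measure of $A$ controlled from below by a Chebyshev-type estimate using the positivity of $u$.

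First I would split $\Omega=A\cup B$ where $A=\{u\le\widehat{u}\}$ and $B=\{u>\widehat{u}\}$. Since $f:[0,b)\to[0,\infty)$ forces $u\ge 0$ a.e., the Markov inequality yields
\begin{equation*}
  u_\Omega\,\m(\Omega) \;=\; \int_\Omega u\,dx \;\ge\; \int_B u\,dx \;\ge\; \widehat{u}\,\m(B),
\end{equation*}
so $\m(B)\le u_\Omega\,\m(\Omega)/\widehat{u}$ and consequently $\m(A)\ge \m(\Omega)(\widehat{u}-u_\Omega)/\widehat{u}>0$. This is the step that genuinely uses the hypothesis $\widehat{u}>u_\Omega$ and keeps the right-hand side of the desired bound finite; it is also the only step that is at all delicate.

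Next I set $F:=f(u)\in H^1(\Omega)$ and denote by $F_A:=\m(A)^{-1}\int_A F\,dx$ and $F_\Omega:=\m(\Omega)^{-1}\int_\Omega F\,dx$ the means on $A$ and on $\Omega$. The monotonicity of $f$ gives $F\le f(\widehat{u})$ on $A$, hence $F_A\le f(\widehat{u})$. I then write
\begin{equation*}
  \|F\|_{L^2(\Omega)}^2 \;\le\; 2\|F-F_A\|_{L^2(\Omega)}^2 \;+\; 2\,\m(\Omega)\,F_A^{\,2}
  \;\le\; 2\|F-F_A\|_{L^2(\Omega)}^2 \;+\; 2\,\m(\Omega)\,f(\widehat{u})^2.
\end{equation*}
It remains to estimate $\|F-F_A\|_{L^2(\Omega)}^2$ in terms of $\|\na F\|_{L^2(\Omega)}^2$.

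For that I would apply the classical Poincaré--Wirtinger inequality $\|F-F_\Omega\|_{L^2(\Omega)}^2\le C_P\|\na F\|_{L^2(\Omega)}^2$, together with the Cauchy--Schwarz bound
\begin{equation*}
  (F_\Omega-F_A)^2 \;=\; \Bigl(\tfrac{1}{\m(A)}\int_A(F_\Omega-F)\,dx\Bigr)^{\!2}
  \;\le\; \tfrac{1}{\m(A)}\int_A(F-F_\Omega)^2\,dx
  \;\le\; \tfrac{C_P}{\m(A)}\|\na F\|_{L^2(\Omega)}^2,
\end{equation*}
and combine them via the triangle inequality to obtain
\begin{equation*}
  \|F-F_A\|_{L^2(\Omega)}^2 \;\le\; 2\|F-F_\Omega\|_{L^2(\Omega)}^2 + 2\,\m(\Omega)(F_\Omega-F_A)^2
  \;\le\; 2C_P\Bigl(1+\tfrac{\m(\Omega)}{\m(A)}\Bigr)\|\na F\|_{L^2(\Omega)}^2.
\end{equation*}
Substituting the lower bound $\m(A)\ge \m(\Omega)(\widehat{u}-u_\Omega)/\widehat{u}$ from the first step gives $\m(\Omega)/\m(A)\le \widehat{u}/(\widehat{u}-u_\Omega)$, and plugging everything back produces exactly the inequality in the statement. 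No step is really an obstacle; the only non-routine ingredient is recognizing that averaging over $A$ (rather than over $\Omega$) is what transfers the pointwise control $F\le f(\widehat{u})$ on $A$ into a bound on $\|F\|_{L^2(\Omega)}$.
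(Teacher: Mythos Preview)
Your proof is correct and yields the same constant as the paper, but the decomposition is genuinely different. The paper works with the truncation $A:=(f(u)-f(\widehat{u}))^+$: it starts from $f(u)^2\le 2f(\widehat{u})^2+2A^2$, applies the Poincar\'e--Wirtinger inequality to $A$ (using $|\na A|\le|\na f(u)|$), and then bounds the mean $A_\Omega$ via $\int_\Omega(A-A_\Omega)^2\,dx\ge \m(\{A=0\})A_\Omega^2$ together with the same Chebyshev lower bound $\m(\{A=0\})=\m(\{u\le\widehat{u}\})\ge\m(\Omega)(\widehat{u}-u_\Omega)/\widehat{u}$ that you derive. You instead keep $F=f(u)$ untruncated, apply Poincar\'e--Wirtinger directly to $F$, and replace the full mean $F_\Omega$ by the partial mean $F_A$ over the sublevel set, controlling the gap $F_\Omega-F_A$ via Jensen. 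Your approach is marginally cleaner in that it avoids the chain rule for the positive part; the paper's approach avoids juggling two different means. Both rest on the identical measure estimate and land on exactly the same final inequality.
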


Notice that the function $f$ may be singular at $b$. Therefore, we need the condition $u_\Omega<b$ to ensure that the right-hand side is finite. We apply this lemma in the proof of Lemma \ref{lem.LDH1} with $f(u)=-\log(1-u)$, $u\in(0,1)$. 

\begin{proof}
The proof is based on the arguments of \cite[Lemma 4.1]{CCFG21}. We set $A:=(f(u)-f(\widehat{u}))^+$ and $A_\Omega=\m(\Omega)^{-1}\int_\Omega Adx$. Since $f(u)^2\le 2f(\widehat{u})^2 + 2[(f(u)-f(\widehat{u}))^+]^2 = 2f(\widehat{u})^2 + 2A^2$, we have
\begin{align}\label{app.fu}
  \int_\Omega f(u)^2 dx \le 4\int_\Omega(A-A_\Omega)^2 dx 
  + 4\int_\Omega A_\Omega^2 dx + 2\m(\Omega)f(\widehat{u})^{2}. 
\end{align}
We set $C_1:=2\m(\Omega)f(\widehat{u})^2$ for the last term. The first term is estimated according to the Poincar\'e--Wirtinger inequality as
\begin{align}\label{app.A-A}
  \int_\Omega(A-A_\Omega)^2 dx \le C_P\|\na A\|_{L^2(\Omega)}^2
  \le C_P\|\na f(u)\|_{L^2(\Omega)}^2.
\end{align}
Furthermore, 
\begin{align*}
  \int_\Omega(A-A_\Omega)^2 dx
  = \int_{\{A=0\}}A_\Omega^2 dx + \int_{\{A>0\}}(A-A_\Omega)^2 dx
  \ge \m(\{A=0\})A_\Omega^2. 
\end{align*}
The previous two inequalities yield
\begin{align}\label{app.AA}
  A_\Omega^2 \le \frac{C_P}{\m(\{A=0\})}\|\na f(u)\|_{L^2(\Omega)}^2.
\end{align}

We need to derive a lower bound for $\m(\{A=0\})$. To this end, we observe that $A>0$ if and only if $u>\widehat{u}$ since $f$ is assumed to be strictly increasing. Then
\begin{align*}
  \big(\m(\Omega)-\m(\{A=0\})\big)\widehat{u}
  = \m(\{A>0\})\widehat{u} = \int_{\{u>\widehat{u}\}}\widehat{u}dx 
  \leq \int_\Omega udx = \m(\Omega)u_\Omega.
\end{align*}
It follows that
\begin{align*}
  \m(\{A=0\})\ge \m(\Omega)\frac{\widehat{u}-u_\Omega}{\widehat{u}} > 0,
\end{align*}
and we infer from \eqref{app.AA} that
\begin{align*}
  A_\Omega^2 \le \frac{\widehat{u}}{\widehat{u}-u_\Omega}
  \frac{C_P}{\m(\Omega)}\|\na f(u)\|_{L^2(\Omega)}^2.
\end{align*}
Combining this inequality with \eqref{app.fu} and \eqref{app.A-A}, we obtain the result.
\end{proof}

\end{appendix}



\begin{thebibliography}{11}

\bibitem{ACFH23} D.~Abdel, C.~Chainais-Hillairet, P.~Farrell, and M.~Herda. Numerical analysis of a finite volume scheme for charge transport in perovskite solar cells. {\em IMA J. Numer. Anal.} 44 (2023), 1090--1129.

\bibitem{AGL24} D.~Abdel, A.~Glitzky, and M.~Liero. Analysis of a drift--diffusion model for perovskite solar cells. {\em Discrete Cont. Dyn. Sys.}, online first, 2024. DOI: 10.3934/dcdsb.2024081.

\bibitem{BGN22} A.~Bhattacharya, M.~Gahn, and M.~Neuss-Radu. Homogenization of a nonlinear drift--diffusion system for multiple
charged species in a porous medium. {\em Nonlin. Anal. Real World Appl.} 68 (2022), no.~103651, 28 pages.

\bibitem{Bla82} J.~Blakemore. Approximations for Fermi--Dirac integrals. {\em Solid State Electron.} 25 (1982), 1067--1076.

\bibitem{BFPR14} D.~Bothe, A.~Fischer, M.~Pierre, and G.~Rolland.
Global existence for diffusion--electromigration systems in space dimension three and higher. {\em Nonlin. Anal.} 99 (2014), 152--166.

\bibitem{CCFG21} C.~Canc\`es, C.~Chanais-Hillairet, J.~Fuhrmann, and B.~Gaudeul. A numerical-analysis-focused comparison of several finite volume schemes for a unipolar degenerate drift--diffusion model. {\em IMA J. Numer. Anal.} 41 (2021), 271--314.

\bibitem{ChLu95} Y.~Choi and R.~Lui. Multi-dimensional electrochemistry model. {\em Arch. Ration. Mech. Anal.} 130 (1995), 315--342.

\bibitem{DGJ97} P.~Degond, S.~G\'enieys, and A.~J\"ungel. A system of parabolic equations in nonequilibrium thermodynamics including thermal and electrical effects. {\em J. Math. Pures Appl.} 76 (1997), 991--1015.

\bibitem{DiRe15} K.~Disser and J.~Rehberg. Optimal Sobolev regularity for linear second-order divergence elliptic operators occuring in real-world problems. {\em SIAM J. Math. Anal.} 47 (2015), 1719--1746.


\bibitem{GaGr89} H.~Gajewski, K.~Gr\"{o}ger. Semiconductor equations for variable mobilities based on Boltzmann statistics or Fermi--Dirac statistics. {\em Math. Nachr.} 140 (1989), 7--36.

\bibitem{GaGr96} H.~Gajewski and K.~Gr\"{o}ger. Reaction--diffusion processes of electrically charged species. {\em Math. Nachr.} 177 (1996), 109--130.

\bibitem{GlHu97} A. Glitzky and R. H\"unlich. Global estimates and asymptotics for electro-reaction-diffusion systems in heterostructures. {\em Appl. Anal.} 66 (1997), 206--226.

\bibitem{GlHu05} A.~Glitzky and R.~H\"unlich. Global existence result for pair diffusion models. {\em SIAM J. Math. Anal.} 36 (2005), 1200--1225.

\bibitem{GlLi19} A.~Glitzky and M.~Liero. Instationary drift--diffusion problems with Gauss–-Fermi statistics and field-dependent mobility for organic semiconductor devices. {\em Commun. Math. Sci.} 17 (2019), 33--59.

\bibitem{GSTD13} J.~Greenlee, J.~Shank, M.~Tellekamp, and A.~Doolittle. Spatiotemporal drift--diffusion simulations of analog circuit memristors. {\em J. Appl. Phys.} 114 (2013), no.~034504, 9 pages.

\bibitem{Gro94} K.~Gr\"oger. Boundedness and continuity of solutions to linear elliptic boundary value problems in two dimensions. {\em Math. Ann.} 298 (1994), 719--728.

\bibitem{HPR19} A.~Heibig, A.~Petrov, and C.~Reichert. Solvability for a drift--diffusion system with Robin boundary conditions. {\em J. Differ. Eqs.} 267 (2019), 2331--2356.

\bibitem{IeAm20} D.~Ielmini and S.~Ambrogio. Emerging neuromorphic devices. {\em Nanotechnology} 31 (2020), no.~092001, 24 pages.

\bibitem{JJZ23} C.~Jourdana, A.~J\"ungel, and N.~Zamponi. Three-species drift--diffusion models for memristors. {\em Math. Models Meth. Appl. Sci.} 33 (2023), 2113--2156.

\bibitem{Jue96} A.~J\"ungel. Asymptotic analysis of a semiconductor model based on Fermi--Dirac statistics. {\em Math. Meth. Appl. Sci.} 19 (1996), 401--424.

\bibitem{Jue09} A.~J\"ungel. {\em Transport Equations for Semiconductors}. Lect. Notes Phys. 773, Springer, Berlin, 2009.


\bibitem{JuVe23} A.~J\"ungel and M.~Vetter. Degenerate drift--diffusion systems for memristors. Submitted for publication, 2023. arXiv:2311.16591.

\bibitem{Mla19} V.~Mladenov. {\em Advanced Memristor Modeling}. MDPI, Basel, 2019.

\bibitem{Sha68} E.~Shamir. Regularization of mixed second-order elliptic problems. {\em Israel J. Math.} 6 (1968), 150--168.

\bibitem{SBW09} D.~Strukov, J.~Borghetti, and S.~Williams.  Coupled ionic and electronic transport model of thin-film semiconductor memristive behavior. {\em Small} 5 (2009), 1058--1063.

\bibitem{TeVa20} N.~Tessler and Y.~Vaynzof. Insights from device modeling of perovskite solar cells. {\em ACS Energy Lett.} 5 (2020), 1260--1270.

\bibitem{Zei90} E.~Zeidler. {\em Nonlinear Functional Analysis and Its Applications II/A: Linear Monotone Operators}. Springer, New York, 1990. 

\end{thebibliography}
\end{document}